\documentclass[reqno]{amsart}
\usepackage{graphicx} 
\usepackage[colorlinks,allcolors=darkblue]{hyperref}
\usepackage{amsmath}
\usepackage{mathtools}
\usepackage{amssymb}
\usepackage{amsthm}
\usepackage{enumitem}
\usepackage{bbm}
\usepackage{mhequ}
\usepackage{mathrsfs}
\usepackage{aliascnt}
\usepackage{microtype}
\usepackage{orcidlink}

\usepackage[nameinlink]{cleveref}

\colorlet{darkblue}{blue!70!black}
\colorlet{darkgreen}{green!50!black}

\def\dash{\leavevmode\unskip\kern0.18em--\penalty\exhyphenpenalty\kern0.18em}
\def\slash{\leavevmode\unskip\kern0.15em/\penalty\exhyphenpenalty\kern0.15em}

\newtheorem{thm}{Theorem}[section]

\newaliascnt{lem}{thm}       
\newtheorem{lem}[lem]{Lemma} 
\aliascntresetthe{lem}       

\newaliascnt{cor}{thm}
 
\aliascntresetthe{cor}

\newaliascnt{deff}{thm}
\newtheorem{deff}[deff]{Definition} 
\aliascntresetthe{deff}

\newaliascnt{rem}{thm}
\newtheorem{rem}[rem]{Remark} 
\aliascntresetthe{rem}

\newaliascnt{ass}{thm}
\newtheorem{ass}[ass]{Assumption} 
\aliascntresetthe{ass}

\crefname{lem}{Lemma}{Lemmas}
\Crefname{lem}{Lemma}{Lemmas}

\crefname{thm}{Theorem}{Theorems}
\Crefname{thm}{Theorem}{Theorems}

\crefname{cor}{Corollary}{Corollaries}
\Crefname{cor}{Corollary}{Corollaries}

\crefname{deff}{Definition}{Definitions}
\Crefname{deff}{Definition}{Definitions}

\crefname{rem}{Remark}{Remarks}
\Crefname{rem}{Remark}{Remarks}

\crefname{ass}{Assumption}{Assumptions}
\Crefname{ass}{Assumption}{Assumptions}

\numberwithin{equation}{section} 

\newcommand{\mat}[4]{%
  \begin{pmatrix}
    #1 & #2 \\
    #3 & #4
  \end{pmatrix}%
}

\def\invm{\mathrm{inv}}

\newcommand{\E}{\mathbb{E}}
\newcommand{\F}{\mathcal{F}}

\newcommand{\CC}{\mathcal{C}}

\newcommand{\Prb}{\mathbb{P}}
\newcommand{\mcM}{\mathcal{M}}
\newcommand{\ncN}{\mathcal{N}}
\newcommand{\Pp}{\mathcal{P}}
\newcommand{\N}{\mathbb{N}}
\newcommand{\R}{\mathbb{R}}
\newcommand{\Id}{\mathbbm{1}}
\newcommand{\inv}{\mathcal{M}_+}

\newcommand{\K}{\mathcal{K}}
\newcommand{\C}{\mathbb{C}}

\newcommand{\Ll}{\mathcal{L}}

\newcommand{\T}{\mathbb{T}}

\newcommand{\Z}{\mathbb{Z}}

\newcommand{\CD}{\mathcal{D}}

\def\fg{\mathfrak{g}}
\def\so{\mathfrak{so}}
\def\CH{\mathcal{H}}
\def\eps{\epsilon}
\def\P{\mathbf{P}}
\DeclarePairedDelimiter{\scal}{\langle}{\rangle}

\makeatletter 
\DeclareRobustCommand*{\act}{%
  \mathbin{\mathpalette\bigcdot@{}}%
}
\DeclareRobustCommand*{\place}{%
  \mathord{\kern0.1em\mathpalette\bigcdot@{}\kern0.1em}%
}
\newcommand*{\bigcdot@scalefactor}{.5}
\newcommand*{\bigcdot@widthfactor}{1.15}
\newcommand*{\bigcdot@}[2]{%
  \sbox0{$#1\vcenter{}$}
  \sbox2{$#1\cdot\m@th$}%
  \hbox to \bigcdot@widthfactor\wd2{%
    \hfil
    \raise\ht0\hbox{%
      \scalebox{\bigcdot@scalefactor}{%
        \lower\ht0\hbox{$#1\bullet\m@th$}%
      }%
    }%
    \hfil
  }%
}
\makeatother

\newcommand{\CO}{\mathcal{O}}
\newcommand{\CA}{\mathcal{A}}
\newcommand{\CB}{\mathcal{B}}

\DeclareMathOperator{\id}{id}
\DeclareMathOperator{\supp}{supp}
\DeclareMathOperator{\ad}{ad}
\DeclarePairedDelimiter{\ip}{\langle}{\rangle}

\def\eqref#1{(\ref{#1})}

\title[Asymptotics of Lyapunov Exponents]{Asymptotics of Lyapunov Exponents and Phase Transitions for Fluids with Degenerate Forcing}
\author{Martin Hairer\orcidlink{0000-0002-2141-6561} and Declan Stacy\orcidlink{0009-0006-7660-948X}}
\address{EPFL, Switzerland and Imperial College London, UK}
\email{martin.hairer@epfl.ch}
\address{EPFL, Switzerland}
\email{declan.stacy@epfl.ch}
\date{\today}

\begin{document}

\begin{abstract}
In this paper we analyze the Lyapunov exponents of a slow-fast system where the slow component is an Ornstein--Uhlenbeck process which perturbs the linear evolution of a fast variable through a bilinear form. These naturally arise in many finite-dimensional models for turbulence such as Galerkin truncation of 2D Navier--Stokes, the Lorenz 96 system, and the Lorenz 63 system. Using our general results about the slow-fast system, we are able to prove phase transitions in the ergodicity of each of these models when degenerate stochastic forcing is applied: as a parameter 
(e.g.\ noise strength or viscosity) varies, the number of invariant measures of the system switches from one to several. We are also able to obtain precise asymptotics for the top Lyapunov exponent associated to the unstable invariant measure. The crux of our proof is using a Wiener chaos expansion to show that mass quickly transfers 
from stable modes to unstable ones.
\vspace{1em}

\noindent{\it Mathematics Subject Classification:} 37H15, 60H10\\
\noindent{\it Keywords:} Lyapunov exponents, fast-slow system, Navier--Stokes, invariant measures
\end{abstract}
\maketitle
\thispagestyle{plain}

\tableofcontents

\section{Introduction}

There have been numerous studies on the stability of almost-sure invariant subsets of stochastic systems (see for example \cite{ecologicalGeneral,persistence, extinction, transverse-lyap,persistence-infinite}) with applications ranging from biology to physics \cite{ecologicalContinuous, ecologicalDiscrete,lorenz, space-time}. In these 
articles, the authors show that the instability of such a subset, which implies the existence of multiple invariant measures, is governed by the positivity of a certain ``transverse''
Lyapunov exponent. In some cases, one can show that this exponent transitions from negative to positive 
when tuning a parameter of the model under consideration, leading to an interesting phase transition. 
However, in most cases it is extremely difficult to compute the exponent explicitly, or even to just determine its
sign (even numerically this is challenging, see \cite{foldes-numerical-lyap}). In this paper we 
develop a method for computing the asymptotics of these Lyapunov exponents which simplifies and generalizes
the results of \cite{lorenz,transverse-lyap}.
Our focus will be on toy models for fluid dynamics, where the transition from uniqueness to 
non-uniqueness of invariant measures 
can be interpreted as a form of transition to turbulence \cite{physics-interpretation}.

\subsection{Motivation}

We are inspired by \cite{lorenz}, where the authors studied the Lorenz~63 system (the $3$-dimensional ODE leading 
to the well-known Lorenz attractor) with random forcing applied to the $z$-component, and computed explicit 
asymptotics for the transverse Lyapunov exponent in terms of the strength of the noise. At any intensity of
the driving noise, this system admits the $z$-axis as an invariant set and
its linearisation around this line is given by 
\begin{equ}
    dz = -z\,dt + \alpha\, dW_t \,\qquad
    dx = (A + Bz)x\,dt \,.
\end{equ}
Here $\alpha > 0$ is the strength of the noise and $A,B$ are fixed $2 \times 2$ matrices. In fact, when $\alpha$ is large, $z$ can be viewed as a ``slow'' variable and $x$ as a ``fast'' variable via a change of variables inspired by \cite[Remark~1.1]{BJA22}, see the derivation of \eqref{eq:lorenz-changed} below. Thus, the natural heuristic is that as $\alpha$ tends to infinity, the Lyapunov exponent of $x$ should be close to $\E[\lambda(A + BZ)]$, where $\lambda(M)$ is the largest real part of the eigenvalues of $M$ (in other words the top Lyapunov exponent of $dx = Mx\,dt$) and $Z$ is 
distributed according to the invariant measure for 
the Ornstein--Uhlenbeck process $z$. In \cite[Theorem 5.2]{lorenz} this heuristic is proven rigorously using 
the ergodicity of a cleverly constructed discrete-time Markov chain, but it is unclear how that proof 
technique would generalize.

In this paper we use entirely different techniques to give a general proof of the heuristic which clearly illustrates how the ``slow'' randomness introduced by $z$ forces $x$ to have the ``correct'' growth rate.
A slightly simplified version of our main result (\Cref{thm:main}) then reads as follows:
\begin{thm}\label{thm:informal}
    Let $z \in \R^d$, $x \in \R^n \setminus \{0\}$, $A, B_i \in \R^{n \times n}$,
    \begin{equation}\label{eq:intro-eq}
           \begin{aligned}
                dz^\epsilon(t) &= -\epsilon z^\epsilon(t)\,dt + \sqrt{\epsilon}\,dW_t \\
                dx^\epsilon(t) &= \Big(A + \sum_{i=1}^d B_i z_i^\epsilon(t)\Bigr)x^\epsilon(t)\,dt \eqqcolon A(z^\epsilon(t))x^\epsilon(t)\,dt \,,
            \end{aligned}
    \end{equation}
    and $\lambda^\epsilon \coloneqq \lim_{t \to \infty} t^{-1}\ln \|x^\epsilon(t)\|$. Let $Z$ be distributed 
    according to the law of the invariant measure of $z^\epsilon(t)$ (which is independent of $\epsilon$). Then, provided
    that \eqref{eq:intro-eq} satisfies a weak non-degeneracy condition on $\R^d \times (\R^n \setminus \{0\})$, we have
    $$\lim_{\epsilon \to 0} \lambda^\epsilon = \E[\lambda(A(Z))] \,.$$
\end{thm}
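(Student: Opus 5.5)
We propose to work with the projective process and the Furstenberg--Khasminskii formula. Writing $\theta^\epsilon = x^\epsilon/\|x^\epsilon\|$, the pair $(z^\epsilon,\theta^\epsilon)$ is a Markov process on $\R^d\times S^{n-1}$. The weak non-degeneracy hypothesis (a H\"ormander-type condition on $\R^d\times(\R^n\setminus\{0\})$) yields a unique invariant measure $\mu^\epsilon$, and
\begin{equ}
\lambda^\epsilon = \int \scal{\theta, A(z)\theta}\,\mu^\epsilon(dz,d\theta)\,.
\end{equ}
It is more convenient to use a time-averaged version of this identity. Rescale time by $s=\epsilon t$. In these units $z$ becomes a standard OU process of order one, and $x$ evolves at speed $\epsilon^{-1}$, that is $dx = \epsilon^{-1}A(z)x\,ds$. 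We then obtain $\lambda^\epsilon=\lim_{T\to\infty} (\epsilon T)^{-1}\E\ln\|x(T)\|$, with $T$ measured in slow time.

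\textbf{Upper bound.} On a slow time interval of length $\delta$, $z$ moves by $O(\sqrt\delta)$ while $x$ runs for fast time $\delta/\epsilon$. Freezing $z$ on each block gives $\|x\|$ growth of at most $\exp\bigl((\lambda(A(z))+\eta)\delta/\epsilon\bigr)$ times a constant $C(z,\eta)$ coming from non-normality and Jordan blocks. We will use continuity of $\lambda(\cdot)$ together with a Gronwall perturbation estimate. Chaining the blocks, the constants contribute $O(\epsilon/\delta)$ per unit slow time. Letting $\epsilon\to0$, then $\delta\to0$, then $\eta\to0$, and using ergodicity of $z$, we get $\limsup\lambda^\epsilon\le\E\lambda(A(Z))$. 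Away from the unbounded region we need integrability of $\lambda(A(Z))$ and of the constants; Gaussian tails and the linear growth of $A(z)$ should handle this.

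\textbf{Lower bound.} This is the hard direction. Freezing $z$, the vector $x$ grows at rate $\lambda(A(z))$ only if its component along the top (generalized) eigenspace of $A(z)$ is not negligible. The danger is that $\theta$ sits in stable directions, and as $z$ drifts slowly the top eigenspace rotates while $\theta$ fails to follow. The plan is to show that mass is transferred quickly to the unstable modes. Concretely, we will prove that for any fixed $x_0$, with probability close to one over a slow time of order $\delta$, the random perturbation of $z$ produces a nonzero projection onto the dominant eigenspace of size at least $e^{-o(1/\epsilon)}$. This follows if $\E|\langle v(z(s)), x\rangle|^2$ is not superexponentially small. To prove it we expand $x(s)$ in Wiener chaos with respect to the noise driving $z$. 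Each chaos term is an iterated integral of products $e^{\tau A}B_i e^{\tau' A}\cdots$, and the non-degeneracy condition (the Lie algebra generated by $A$ and the $B_i$ acting transitively) guarantees that some finite-order chaos has a nonvanishing component in the unstable direction. The chaos terms are orthogonal, so no cancellation between them can occur, and this gives the needed lower bound. Once the projection is $e^{-o(1/\epsilon)}$, the logarithmic loss per block is $o(\delta/\epsilon)$, and the same block argument as in the upper bound gives $\liminf\lambda^\epsilon\ge\E\lambda(A(Z))$. Eigenvalue crossings, where $\lambda$ is non-smooth, occur on a set of $z$ of small measure and can be discarded by the same truncation.

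The main obstacle is the quantitative lower bound on the chaos component. We need it uniformly in $x_0$ on the sphere and in $z$ on compacts, and we need to control its probability rather than just its second moment. A Paley--Zygmund or hypercontractivity argument on finite chaoses will be used to pass from the $L^2$ bound to a bound in probability.
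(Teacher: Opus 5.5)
Your upper bound is essentially fine. In the paper it is even automatic, since every $\epsilon=0$ invariant measure satisfies $\int\Lambda\,d\mu\le\E[\lambda(A(Z))]$ by \Cref{lem:description-of-Pinv}. You have also identified the right engine for the lower bound: a chaos expansion, the brackets $\operatorname{ad}_A^k(B_i)$, and hypercontractivity on finite chaoses.

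The gap is the step ``once the projection is $e^{-o(1/\epsilon)}$, the logarithmic loss per block is $o(\delta/\epsilon)$, and the same block argument as in the upper bound gives the $\liminf$''. Freezing $z$ and applying Gronwall controls growth from above but cannot certify growth at the top rate. Over fast time $\delta/\epsilon$, the discrepancy between the true and the frozen solution is only bounded by something of order $e^{(\lambda+\eta+C\sqrt\delta)\delta/\epsilon}$, which swamps the signal $p\,e^{(\lambda-\eta)\delta/\epsilon}$. Propagating a lower bound needs a cone argument, and that requires the dominant component to be large compared with the perturbation. Set $E(t)=A(z(t))-A(z_0)$, $P=P_\eta(z_0)$, $r=\|Px\|/\|(1-P)x\|$, and use norms adapted to $A(z_0)$. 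Then one only gets $\frac{d}{dt}\ln r\ge\frac{\eta}{2}-C\|E\|(2+r+r^{-1})$. This says nothing once $r\lesssim\|E\|$, and then nothing prevents the forcing $PE(1-P)x$, of size $\|E\|\|x\|\gg\|Px\|$, from cancelling the dominant component. But what the chaos expansion produces at a given time is a dominant component of relative size about $\epsilon^{K/2}$. That is no larger than $\|E(t)\|\approx(\epsilon t)^{1/2}$, and much smaller as soon as $K\ge2$. So your plan gives no reason why it survives, let alone takes over within fast time $o(\delta/\epsilon)$. Chaining chaos windows does not help either. Each window yields $\|x(T)\|\gtrsim\epsilon^{K/2}e^{(\lambda-\eta')T}\|x_0\|$ only with probability bounded below. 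On the $T\approx\ln\epsilon^{-1}$ where the remainder is controllable, the factor $\epsilon^{K/2}$ is itself a loss of order $e^{-cT}$.

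The missing idea is to convert growth of $\|x\|$ into an \emph{order-one} projection, which is then robust.

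In \Cref{lem:high-mass-decays}, the paper uses the $\epsilon=0$ invariant measures carried by $\{P_\eta(z)v=0\}$ and a martingale estimate. It shows that if $\|P_\eta(z_0)v^\epsilon(t)\|\le\delta$ for all $t\le T_\epsilon=\frac{K+1}{2\eta}\ln\epsilon^{-1}$, then, up to an event of small probability, $\|x^\epsilon(T_\epsilon)\|\le e^{(\lambda-\eta+\kappa)T_\epsilon}\|x_0\|$. On the other hand, the truncated chaos $X_K$ (via Paley--Zygmund) together with a pathwise bound on the remainder $R_K$ gives $\|x^\epsilon(T_\epsilon)\|\ge\epsilon^{K/2}e^{(\lambda-\eta')T_\epsilon}\|x_0\|$ with probability bounded below. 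This choice of $T_\epsilon$ turns $\epsilon^{K/2}$ into $e^{-\frac{K\eta}{K+1}T_\epsilon}$, so the two are incompatible.

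Hence an order-one projection appears within time $C\ln\epsilon^{-1}$ with positive probability (\Cref{lem:mass-from-high-to-low}), and Markov iteration makes this likely. It then persists up to time $\epsilon^{-1/2}$ by the cone argument, which is now legitimate (\Cref{lem:mass-stays-on-high}). The paper concludes through invariant measures rather than pathwise blocks. Weak limits of $\mu^\epsilon\in P^\epsilon_{\invm}$ give no mass to $\{P_0(z)v=0\}$, and \Cref{lem:description-of-Pinv} yields $\int\Lambda\,d\mu=\E[\lambda(A(Z))]$. This also handles Jordan blocks and complex eigenvalues, where $\Lambda(z,v)\neq\lambda(A(z))$ pointwise.

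Two smaller points:
\begin{itemize}
\item A lower bound on $\E|\langle v(z(s)),x\rangle|^2$ for the full $x$ on your slow time scales is not informative, since that moment is dominated by atypical excursions of $z$. The finite truncation with separate remainder control is essential.
\item You should not assume a unique $\mu^\epsilon$. The condition in the theorem (\Cref{as:nondegen}) is strictly weaker than H\"ormander's, and \Cref{ex:degenerate} has infinitely many ergodic invariant measures, which is why the paper controls $\inf_{\mu^\epsilon\in P^\epsilon_{\invm}}$.
\end{itemize}
Also, in slow time the normalisation should be $\epsilon/T$, not $(\epsilon T)^{-1}$.
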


\begin{rem}\label{rem:weakCond}
The non-degeneracy condition mentioned in the statement is satisfied whenever
\eqref{eq:intro-eq} satisfies the parabolic Hörmander condition on $\R^d \times (\R^n \setminus \{0\})$, 
but it is strictly weaker and does not even imply uniqueness of the invariant measure at $\epsilon > 0$,
as we will see in \Cref{ex:degenerate} below. 
\end{rem}

Thus, one contribution of this paper is the study of Lyapunov exponents for multi-scale systems such as the one above. While this is interesting in its own right, it turns out that many fluid models including Lorenz 63 (the one studied in \cite{lorenz}), Lorenz 96, and (Galerkin truncation of) 2D Navier--Stokes have a form which involves only a linear and a bilinear part. Thus, when additive noise is introduced, the relevant dynamics governing the (transverse) Lyapunov exponent can be written exactly in the form above. As a direct application of \Cref{thm:informal}, we are able to show both precise asymptotics of the Lyapunov exponents of the aforementioned systems with truly degenerate forcing (not even hypoelliptic), as well as phase transitions where the invariant measure goes from being unique to non-unique (using the results in \cite{persistence, extinction}).

Another motivation for studying this problem is a long-standing conjecture given in 
\cite[Rem~2.4]{hairer-mattingly} about 
the 2D stochastic Navier--Stokes equations. In particular, when certain degenerate forcing is added to 2D 
Navier--Stokes on a torus and the viscosity is small enough, we expect there to be multiple invariant measures. 
In this article we show that this conjecture holds in a special case at the level of the Galerkin truncations, 
which points towards a positive answer. (See \Cref{sec:sec:2d-navier-stokes} for more discussion; we also show 
that the infinite-dimensional conjecture is false for one particular edge case.) For our second example (Lorenz 96 system), analogous results about the non-uniqueness of invariant measures are shown in \cite{transverse-lyap}, but the precise asymptotics of the Lyapunov exponents are new (they show $\epsilon^{-1}\lambda_\epsilon \to \infty$
while we show in \Cref{thm:lorenz96} that $\lambda_\epsilon$ converges to a strictly positive constant $c$ with an explicit expression). Also, our assumptions are much easier to verify than the criteria in \cite{transverse-lyap} (the authors use an entirely different proof method which forces one to study a certain Lie algebra). For our last example, namely the Lorenz 63 system, these results were already 
obtained in \cite{lorenz}, but we show that they are an easy corollary of our general results, and again we emphasize that our proofs are quite different.

\subsection{Proof Strategy}
Our proof strategy for \Cref{thm:informal} is based on the Wiener Chaos expansion of the solution to the 
linear system for the variable $x$ (which does however depend non-linearly on $z$ and therefore on the 
driving noise). Since $dx = A(z)x\,dt$, we expect that $x_t \approx e^{tA(z_0)}x_0$ when $z$ is slow. We 
want to show that $\|x_t\| \approx e^{\lambda(A(z_0))t}$, but this is of course not true for all $x_0$
(just take for $x_0$ an eigenvector corresponding to an eigenvalue $\lambda$ with $\Re \lambda < \lambda(A(z_0))$).

Looking at the projective process $v_t = x_t/\|x_t\|$, this is a consequence of the fact that every eigenvector
of $A(z_0)$ leads to a critical point for its dynamics. These critical points are all saddle points, except
for those corresponding to $\Re \lambda = \lambda(A(z_0))$. The crux of the proof is therefore to
show that the noise is strong enough to ensure that the projective process does not get
`trapped' by these saddle points. 

It is well known that if one adds non-degenerate noise of intensity $\epsilon$, then it takes a
time of order $\sim \ln \epsilon^{-1}$ to escape a neighborhood of an unstable invariant set 
(one where the Lyapunov exponent is positive, so we are getting exponential growth in the directions 
orthogonal to the invariant subspace). Then once we are far enough from this set, it should be quite slow (at least polynomial in $\epsilon^{-1}$ time) to return since the noise is only of strength $\epsilon$ and the subset is unstable. Thus, we spend a proportionally minuscule amount of time near this ``bad'' set where the growth rate is small.

In our context, this $\epsilon$ amount of noise mentioned above comes from the fact that $z_t$ is changing (take in this example $dz = -\epsilon z\,dt + \sqrt \epsilon dW_t$), so it is random, but this happens quite slowly (on a time scale of order $\epsilon^{-1}$). In particular, since $A(z)$ is affine in $z$, the variation of constants formula yields
\begin{equation}\label{eq:intro-first-chaos}
    x_t = e^{tA(z_0)}x_0 + \int_0^t e^{(t-s)A(z_0)}A'(z_0)e^{sA(z_0)}x_0(z_s - z_0)ds + R_1(t)
\end{equation}
where $R_1(t) = O(\sup_{s \leq t}|z_s - z_0|^2)$ is a remainder term. 

This expression suggests that even if we start with $x_0$ belonging to an eigenspace of $A(z_0)$ with ``small'' exponential growth
rate, $x_t$ will grow at rate $\lambda(A(z_0))$ provided that $A'(z_0)x_0$ has a non-zero component in one of the
eigenspaces with maximal growth rate.
In general, one may need to consider a higher order expansion or to differentiate the integrand with respect to $s$ (thereby 
introducing Lie brackets of $A'(z_0)$ with $A(z_0)$). In this article we impose a non-degeneracy assumption on $z \mapsto A(z)$ which is general but also quite easy to verify (see \Cref{as:nondegen}). In particular, one advantage of the Wiener chaos-based proof technique
used in this article is that it makes it clear what the ``correct'' non-degeneracy assumption should be.

A more detailed proof sketch in the general case goes as follows:
\begin{enumerate}
    \item Expand $x_t$ as in \eqref{eq:intro-first-chaos} up to order $K$, where $K$ is based on our non-degeneracy assumption on $z \mapsto A(z)$.
    \item Rewrite the expansion in Wiener chaos form $x_t = \sum_{k=0}^K X_k(t) + R_K(t)$, where $X_k(t)$ lives in the $k$th Wiener chaos (note that $z_t$ is OU so it is equal to its first order Wiener chaos expansion, so this is indeed possible).
    \item Use the non-degeneracy assumption to lower bound $\E[\|PX_K(t)\|^2]$, where $P$ is the projection onto the ``unstable'' modes and $t \sim \ln \epsilon^{-1}$. By hypercontractivity, this implies that $PX_K(t)$ has a nontrivial probability of becoming big.
    \item Prove a stronger upper bound on the remainder $R_K(t)$ (this is easy since it has an extra power of $z_t - z_0$ and $z_t$ is a slow process).
    \item Show that $\|Px_t\|$ being big cannot happen if $\|Pv_t\|$ is small for a long time, in other words we cannot keep having most of the mass on stable modes, which shows that some mass transfers from stable to unstable modes.
    \item Show that once $\|Pv_t\|$ is not too small, then it probably stays not too small for a time which is polynomial in $\epsilon^{-1}$ (intuitively this is because the unstable modes grow much faster than the stable ones).
    \item Using the above, conclude that as $\epsilon \downarrow 0$ (proportionally) $v_t$ spends almost no time near $Pv_t = 0$.
    \item Since all invariant measures for $(z_t,v_t)$ when $\epsilon = 0$ are supported on the sets $Pv_t = 0$ or $\|Pv_t\| = 1$, we conclude that the limiting invariant measures are fully supported on the unstable modes, which is equivalent to $x_t$ growing like $e^{t\lambda(A(z))}$.
\end{enumerate}

\subsection{Related Problems}
Although we do not borrow directly from their proof, similar ideas appear in \cite{Tommaso, Batchelor}, in which the authors study the behavior of projective processes of SPDEs with low viscosity. In fact, the study of projective processes of SPDEs in general was part of the motivation for this paper because if one hopes to apply the general ideas of \cite{persistence, persistence-infinite} to attack the infinite-dimensional conjecture in \cite{hairer-mattingly}, one would need to obtain bounds on the $H^1$ norm of the projective process. One idea for obtaining such bounds is to show that mass moves from high-frequency Fourier modes to lower ones, and this is essentially what 
we do here (albeit in a finite-dimensional setting).

Other methods for proving positivity of Lyapunov exponents have been explored in \cite{BJA22, BJA23, BJ24}, in particular in the hypoelliptic case. The authors use a proof by contradiction relying on bounding the norm of a limiting invariant measure (assuming the exponents are negative) in a negative regularity space, which relies on a uniform hypoellipticity assumption. This assumption can be quite difficult to verify, often requiring computer assistance \cite{BJA23, BJ24}, and due to the contradiction-style of proof, the resulting asymptotics the authors obtain on the top Lyapunov exponent are not sharp. We also note that the models we study differ from \cite{BJA22, BJA23, BJ24} in that we consider degenerate (non-hypoelliptic) noise, and so we are more interested in the question introduced above about non-uniqueness of invariant measures, and positivity of the \textit{transverse} Lyapunov exponent. On the other hand, the results on positivity of Lyapunov exponents in \cite{BJA22, BJA23, BJ24} are more general in the sense that they allow for more nonlinearity in the coefficients. Thus, our results and those in \cite{BJA22, BJA23, BJ24} should be seen as complementary rather than overlapping, and the methods employed are extremely different.

Finally, looking at the last step of the proof above, we see that \Cref{thm:informal} is also related to another general problem, namely: if one adds uniformly hypoelliptic noise of strength $\epsilon$ to an ODE with an unstable invariant manifold $\mcM_0$, then as $\epsilon \downarrow 0$, do all limit points $\mu$ of the invariant measures $\mu_\epsilon$ of the SDEs satisfy $\mu(\mcM_0) = 0$? Or, also related to our proof, is the time needed to exit a small neighborhood of $\mcM_0$ proportional to $\ln \epsilon^{-1}$? This sort of question has been studied in the elliptic case (see for example \cite{Berkeley, exit-times-1, exit-times-2, exit-times-3}), but it would be interesting to see a hypoelliptic version of this in general (note that \eqref{eq:intro-eq} is not elliptic, instead our proof shows how the noise propagates from $z$ into $v = x/\|x\|$).

\subsection{Outline}
The paper proceeds as follows. In \Cref{sec:setup} we provide a more thorough discussion and formal statement of \Cref{thm:informal}. In \Cref{sec:applications} we present the applications of our main theorem \Cref{thm:main} to Galerkin truncation of 2D Navier--Stokes, Lorenz 96, and Lorenz 63 systems with degenerate stochastic forcing. In \Cref{sec:main} we present some preliminary lemmas and use them to prove \Cref{thm:main}. In \Cref{sec:mass-moving} we prove the lemmas in \Cref{sec:main} using the mass-moving ideas sketched above. Some more technical lemmas are proved in \Cref{sec:appendix}. There we also present an important theorem (\Cref{thm:persistence-extinction}) relating the (transverse) Lyapunov exponent to the stability of an invariant subset, which is essentially a summary of the results in the manuscripts \cite{persistence, extinction} adapted to our applications, which we include for the convenience of the reader.

\section{Main Theorem}\label{sec:setup}

    Let $z_t$ be an OU process on $\R^d$. For simplicity, we assume that $z$ has the form
    \begin{equation}\label{eq:z-def}
        dz_t = -Dz_t\,dt + dW_t \,,
    \end{equation}
    where $W_t = (W^1_t,\dots,W^d_t)$ is a standard $d$-dimensional Brownian motion and $D = \operatorname{Diag}(d_1,\dots,d_d)$ is a diagonal matrix with positive entries ($d_i > 0$).
    \begin{rem}
        In general, our results also apply if $z$ solves
        \begin{equ}[e:OUgen]
        dz_t = -Cz_t\,dt + \Sigma dW_t
        \end{equ}
        where $W_t$ is a standard $m$-dimensional Brownian motion, $\Sigma \in \R^{d \times m}$ has rank $d$, and $C \in \R^{d \times d}$ has $\Re \lambda > 0$ for all eigenvalues $\lambda$ of $C$. We stick with the less general formulation above for ease of notation and because all of our examples arise naturally in the form \eqref{eq:z-def}. Also, we note that writing $\Sigma = \bar \Sigma U$ with $U$ orthogonal and $\bar\Sigma$ of the form $\bar\Sigma = (A |0)$ with $A  \in \R^{d \times d}$, we can 
        reduce \eqref{e:OUgen} to \eqref{eq:z-def} as soon as $A^{-1}CA$ is symmetric.
    \end{rem}
    
    Let $B: \R^d \times \R^n \to \R^n$ be a bilinear form, which we may write as $B(z,x) = \sum_{i=1}^d z_iB_ix$ for $B_i \in \R^{n \times n}$, and 
    let $A^\epsilon: \R^n \to \R^n$ be a family of linear maps indexed by $\epsilon \in [0,1]$ such that $\epsilon\mapsto A^\epsilon$ is continuous. For $\epsilon = 0$, we simply write $A$.
    \begin{deff}\label{def:matrix-notation}
        For $z \in \R^d$, $\epsilon \in [0,1]$ we define the $n$ by $n$ matrices
        $$A^\epsilon(z) \coloneqq A^\epsilon + \sum_{i=1}^d z_iB_i \quad \text{and} \quad A(z) \coloneqq A^0(z)\;.
        $$
    \end{deff}
    
    Let $x^\epsilon$ be the solution to the random ODE
    \begin{equation}\label{eq:x-def}
        dx^\epsilon(t) = A^\epsilon(z^\epsilon(t))x^\epsilon(t)dt \,,
    \end{equation}
    and define the processes
    \begin{equation}\label{eq:zv-def}
        z^\epsilon(t) \coloneqq z_{\epsilon t} \,, \quad r^\epsilon(t) \coloneqq \|x^\epsilon(t)\| \,, \quad \text{and} \quad v^\epsilon(t) \coloneqq x^\epsilon(t)\|x^\epsilon(t)\|^{-1} \,.
    \end{equation}

    Note that although $v^\epsilon$, which takes values on the unit sphere $S^{n-1}$ of $\R^n$, is technically not well-defined when $x^\epsilon = 0$, the evolution equations for the above system
    also make sense there by continuity.
    Indeed, the chain rule yields
    \begin{equation}\label{eq:zvr-system-def}
    \begin{aligned}
        dz &= -\epsilon Dz\,dt + \sqrt{\epsilon} \,dW_t \\
        dv &= [A^\epsilon(z)v - \ip{A^\epsilon(z)v,v}v]\,dt \\
        dr &= r\ip{A^\epsilon(z)v,v}\,dt \,,
    \end{aligned}
    \end{equation}
    which still makes sense when $r = 0$. Formally,
    \begin{lem}\label{lem:well-posed}
        For a given
        $$(z_0,v_0,r_0,\epsilon) \in \R^d \times S^{n-1} \times [0,\infty) \times [0,1] \,,$$
        the equation \eqref{eq:zvr-system-def} has a unique solution $(z^\epsilon(t),v^\epsilon(t),r^\epsilon(t))$ with $(z^\epsilon(0),v^\epsilon(0),r^\epsilon(0)) = (z_0,v_0,r_0)$. Furthermore, $x^\epsilon(t) \coloneqq r^\epsilon(t)v^\epsilon(t)$ solves \eqref{eq:x-def} and $z^\epsilon(t) = z_{\epsilon t}$, where $z_t$ solves \eqref{eq:z-def}.
    \end{lem}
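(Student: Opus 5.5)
The plan is to solve the system in a triangular fashion. First I solve the equation for $z$. Then I treat the linear equation for $x$ as a random ODE driven by the continuous path $z$. Finally I recover $(v,r)$ from $x$ and check uniqueness directly on the $(v,r)$ system.

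\textbf{The $z$ component.} The first line of \eqref{eq:zvr-system-def} is a linear SDE with constant coefficients and additive noise, so it has a unique strong solution, given explicitly by
\begin{equ}
z(t) = e^{-\epsilon D t}z_0 + \sqrt{\epsilon}\int_0^t e^{-\epsilon D(t-s)}\,dW_s \,.
\end{equ}
For $\epsilon>0$, set $\tilde W_s \coloneqq \sqrt{\epsilon}\,W_{s/\epsilon}$, which is again a standard Brownian motion. A time change then shows that $z(t)=z_{\epsilon t}$, where $z$ solves \eqref{eq:z-def} driven by $\tilde W$. For $\epsilon=0$ we simply have $z\equiv z_0$, which matches $z_{0\cdot t}=z_0$. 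In both cases $z$ has continuous paths, so $t\mapsto A^\epsilon(z(t))$ is a continuous matrix-valued function on compact time intervals.

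\textbf{The $(v,r)$ components.} Fix a realisation of $z$. The vector field
\begin{equ}
F(t,v,r)=\bigl(A^\epsilon(z(t))v-\ip{A^\epsilon(z(t))v,v}v,\; r\ip{A^\epsilon(z(t))v,v}\bigr)
\end{equ}
is continuous in $t$ and locally Lipschitz (indeed polynomial) in $(v,r)\in\R^n\times\R$. Picard--Lindelöf therefore gives a unique local solution.

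To extend it globally, I first check that $S^{n-1}$ is invariant. With $\rho=\|v\|^2$ and $M_t=A^\epsilon(z(t))$,
\begin{equ}
\frac{d}{dt}\rho = 2\ip{M_t v,v}(1-\rho) \,.
\end{equ}
This is linear in $1-\rho$ with a continuous coefficient, so $\rho\equiv1$ whenever $\rho(0)=1$. Hence $v$ stays in a compact set, and the equation for $r$ is linear with a bounded continuous coefficient. This gives
\begin{equ}
r(t)=r_0\exp\Bigl(\int_0^t\ip{M_s v_s,v_s}\,ds\Bigr) \ge 0 \,,
\end{equ}
so $r$ cannot blow up and the solution exists for all $t\ge0$.

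\textbf{Relation to $x$.} Set $x=rv$. The product rule gives
\begin{equ}
\dot x = \dot r\,v + r\,\dot v = r\ip{M v,v}v + r\bigl(Mv-\ip{Mv,v}v\bigr) = Mx \,,
\end{equ}
so $x$ solves \eqref{eq:x-def}. This holds even when $r_0=0$: then $r\equiv0$, $x\equiv0$, and $v$ is still well defined and unique. When $r_0>0$ we have $r>0$ for all $t$, so by uniqueness for the linear ODE, $v=x/\|x\|$ and $r=\|x\|$ agree with \eqref{eq:zv-def}.

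\textbf{Uniqueness and measurability.} Uniqueness of the full triple follows from pathwise uniqueness of $z$, combined with deterministic uniqueness of $(v,r)$ for a given path of $z$. The only point requiring care is adaptedness and measurability of the solution. This follows because the $(v,r)$ component is a continuous function of the path of $z$, by continuous dependence of ODE solutions on a continuous coefficient in the sup norm. I do not expect any genuine obstacle; the invariance of the sphere is the only step that needs a small computation.
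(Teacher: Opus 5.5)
Your argument is correct and complete. The paper gives no proof of this lemma and treats it as standard. It only derives \eqref{eq:zvr-system-def} from \eqref{eq:x-def} by the chain rule for $x\neq 0$. Later, in the proof of \Cref{thm:main}, it appeals to the coefficients being smooth with linear growth once $v$ is confined to $S^{n-1}$. In other words, it implicitly regards the system as an SDE with locally Lipschitz coefficients and rules out explosion.

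Your triangular, pathwise argument supplies exactly those omitted details:
\begin{itemize}
\item the explicit OU solution together with the time change $\tilde W_s=\sqrt{\epsilon}\,W_{s/\epsilon}$;
\item Picard--Lindel\"of for $(v,r)$ given the path of $z$;
\item invariance of the sphere via $\frac{d}{dt}\|v\|^2=2\ip{Mv,v}(1-\|v\|^2)$;
\item the product rule to recover $\dot x=Mx$.
\end{itemize}
It also makes two points explicit that the paper glosses over: the identification $z^\epsilon(t)=z_{\epsilon t}$ with a time-changed Brownian motion, and the degenerate case $r_0=0$.

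If you want a shorter existence argument, use the fundamental matrix $\Phi_t$ of $\dot\Phi=A^\epsilon(z(t))\Phi$, $\Phi_0=I$, which is invertible. Then $v(t)=\Phi_t v_0/\|\Phi_t v_0\|$ and $r(t)=r_0\|\Phi_t v_0\|$ give a global solution for every $r_0\ge 0$ in one stroke. Uniqueness still comes from the local Lipschitz property, as in your proposal.
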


    We are interested in the Lyapunov exponent associated to \eqref{eq:x-def} defined by
    \begin{equation}\label{eq:lyap-exp-def}
        \lambda_\epsilon \coloneqq \lim_{t \to \infty} \frac{1}{t}\ln \|x^\epsilon(t)\| \,,
    \end{equation}
    which for every initial condition exists (as a random variable) almost surely by the multiplicative ergodic theorem. Another way of viewing the Lyapunov exponents is via the invariant measures of the projective process $(z,v)$. Define the function
    \begin{equation}\label{eq:L-lnr-def}
        \Lambda^\epsilon(z,v) \coloneqq \ip{A^{\epsilon}(z)v,v} \,,
    \end{equation}
    which captures the infinitesimal rate of growth of $\ln \|x\|$. Using the $dr$ portion of \eqref{eq:zvr-system-def}, we have
    \begin{equation}\label{eq:dlnr}
        \frac{1}{t}\ln\|x^{\epsilon}(t)\| = \frac{1}{t}\ln\|x_0\| + \frac{1}{t}\int_0^t \ip{A^{\epsilon}(z^\epsilon(s))v^\epsilon(s),v^\epsilon(s)}ds \,.
    \end{equation}
    Ergodic theory tells us that the Lyapunov exponents are intimately related to the set $P_{\invm}^\epsilon$ of invariant measures for the process $(z^\epsilon(t),v^\epsilon(t))$ on $\R^d \times S^{n-1}$. Indeed, if $\mu \in P_{\invm}^\epsilon$ is ergodic then for $\mu$-a.e. $(z_0,v_0) \in \R^d \times S^{n-1}$
    \begin{equation}\label{eq:lyap-exp-equals-ergodic-average}
        \lambda_\epsilon = \int \Lambda^\epsilon(z,v)d\mu(z,v) \,.
    \end{equation}
    
    When $\epsilon = 0$, $P_{\invm}$ is quite large since $z$ becomes a constant process, so for every $z_0 \in \R^d$ there are invariant measures which give probability one to $\{z = z_0\}$. These measures must be supported on invariant subspaces of the operator $A(z_0)$ (projected onto the sphere). For example, if the aforementioned operator is diagonal with distinct real eigenvalues $\lambda_1>\dots>\lambda_n$ then the corresponding eigenvectors $v_1,\dots,v_n$ each give $\delta_{(z_0,v_i)} \in P_{\invm}$, where $\delta$ denotes the Dirac delta measure. Also, $\int \Lambda(z,v)d\delta_{(z_0,v_i)}(z,v) = \Lambda(z_0,v_i) = \lambda_i$, so that the different possible values of \eqref{eq:lyap-exp-def} are exactly the eigenvalues. A general version of this correspondence is given below in \Cref{lem:description-of-Pinv}. We aim to show that, under a suitable nondegeneracy condition, in the limit as $\epsilon \downarrow 0$ the largest eigenvalues always win. In the example above, we would expect that the measure $\delta_{(z_0,v_i)}$ does not appear for $i > 1$. Thus, although $P_{\invm}$ is quite large, many measures are not stable with respect to perturbations by $\epsilon$; only the ``fastest-growing'' measures (for each $z_0$) should be observed. We define the set of eigenvalues and the largest one as follows:
    \begin{deff}\label{def:sigma-lambda}
        Given a matrix $T \in \R^{n \times n}$ we write $\sigma(T)$ for its spectrum
        and we set
        $\lambda(T) \coloneqq \sup\{\Re \lambda \mid \lambda \in \sigma(T)\}$.
    \end{deff}

Given a square matrix $A$, we define
$\operatorname{ad}_{A} \colon M \mapsto [A,M]$.
We say that a subspace $W \subset \R^n$ is invariant for an $n\times n$ matrix $M$ if $MW \subset W$,
and $W$ is a nontrivial invariant subspace if furthermore $W \neq \{0\}$ and $W \neq \R^n$. For a set $S$ of $n\times n$ matrices, we say that $W$ is invariant for $S$ if it is invariant for all matrices in $S$.

    \begin{ass}\label{as:nondegen}
    There are no nontrivial invariant subspaces for
    $S \coloneqq \{\ad_A^k(B_i) \mid k \geq 0, i = 1,\dots,d\}$.
    \end{ass}
 
With all of these preliminaries in place, the main result of this article is as follows.

    \begin{thm}\label{thm:main}
        Suppose \Cref{as:nondegen} holds. Then, for every $\epsilon_n \downarrow 0$ and initial conditions $(z_n,x_n)$ (with $x_n\neq 0$) for the process $(z^{\epsilon_n}(t), x^{\epsilon_n}(t))$ given by \eqref{eq:zv-def}, one has
        \begin{equation}\label{eq:main-first}
            \lim_{n \to \infty} \lambda_{\epsilon_n} = \lambda^* \coloneqq \E[\lambda(A(Z))]\quad \text{a.s.}
        \end{equation}
        (see \eqref{eq:lyap-exp-def} and \Cref{def:sigma-lambda}), where the law of $Z$ is the invariant measure for \eqref{eq:z-def}.

        Additionally,
        \begin{equation}\label{eq:main-second}
            \lim_{\epsilon \downarrow 0} \inf_{\mu^\epsilon \in P_{\invm}^\epsilon}\int \Lambda^\epsilon(z,v)d\mu^\epsilon(z,v) = \lambda^* \,,
        \end{equation}
        where $P^\epsilon_{\invm}$ is the set of invariant measures for $(z^\epsilon(t),v^\epsilon(t))$ and $\Lambda^\epsilon$ is as in \eqref{eq:L-lnr-def}.
    \end{thm}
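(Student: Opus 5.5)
The plan is to work at the level of invariant measures of the projective process $(z^\epsilon,v^\epsilon)$ and obtain \eqref{eq:main-first} as a consequence of a two-sided version of \eqref{eq:main-second}. First, for a fixed initial condition, the empirical measures $\frac1t\int_0^t\delta_{(z^\epsilon(s),v^\epsilon(s))}\,ds$ are tight. Indeed, $S^{n-1}$ is compact and the $z$-marginal converges to the Gaussian law of $Z$, which is independent of $\epsilon$. By Krylov--Bogoliubov, their limit points lie in $P^\epsilon_{\invm}$. Combining \eqref{eq:dlnr} with the fact that $|\Lambda^\epsilon(z,v)|\lesssim 1+|z|$ is uniformly integrable under Gaussian moments, I would deduce that almost surely
\[
\inf_{\mu\in P^\epsilon_{\invm}}\int\Lambda^\epsilon\,d\mu\;\le\;\lambda_\epsilon\;\le\;\sup_{\mu\in P^\epsilon_{\invm}}\int\Lambda^\epsilon\,d\mu .
\]
One can alternatively use \eqref{eq:lyap-exp-equals-ergodic-average} together with an ergodic decomposition. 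It therefore suffices to show that both the $\inf$ and the $\sup$ converge to $\lambda^*$ as $\epsilon\downarrow0$.

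\textbf{Upper bound.} Take any sequence $\mu^{\epsilon_n}\in P^{\epsilon_n}_{\invm}$. Its $z$-marginal is always $\mathrm{Law}(Z)$, so the sequence is tight. I would check that every weak limit $\mu$ is invariant for the $\epsilon=0$ dynamics, using the continuity of $\epsilon\mapsto A^\epsilon$ and Feller continuity of \eqref{eq:zvr-system-def} in $\epsilon$ on finite time intervals. Then I would disintegrate $\mu$ over $z$. For a.e.\ $z_0$, the conditional measure is invariant for the linear projective flow of $A(z_0)$, and by \Cref{lem:description-of-Pinv} it is supported on projectivised invariant subspaces of $A(z_0)$. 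Its $\Lambda$-average is a convex combination of real parts of eigenvalues, hence at most $\lambda(A(z_0))$. Uniform integrability then gives $\limsup\sup_\mu\int\Lambda^\epsilon d\mu\le\E[\lambda(A(Z))]=\lambda^*$. This part does not use \Cref{as:nondegen}.

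\textbf{Lower bound, which is the main obstacle.} By the same description, equality $\int\Lambda\,d\mu=\lambda^*$ holds if and only if, for a.e.\ $z_0$, the conditional measure gives full mass to the top generalized eigenspace $E^+(z_0)$ of $A(z_0)$, i.e.\ to $\{\|P_{z_0}v\|=1\}$ in the paper's notation. So I must show that limit measures put no mass near $\{P_{z}v=0\}$. My approach is to localise to $z$ in a compact set $K$ on which the spectral gap between $\lambda(A(z))$ and the remaining spectrum is bounded below, and to ignore a small-measure exceptional set of $z$. I would then follow the chaos argument of the introduction on the time scale $t\sim C\ln\epsilon^{-1}$:
\begin{enumerate}
\item Expand $x_t$ in the Wiener chaoses generated by $z_{\epsilon s}-z_0$ up to order $K$, as in \eqref{eq:intro-first-chaos}.
\item Use \Cref{as:nondegen} to derive a uniform lower bound $\E\|P X_K(t)\|^2\gtrsim \epsilon^{K}$ times exponential growth, for all unit $x_0$. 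The reason is that the only subspace killed by all iterated $\ad_A^k(B_i)$ terms would be a nontrivial invariant subspace; a compactness argument over $x_0\in S^{n-1}$ and $z_0\in K$ makes the bound uniform.
\item Apply hypercontractivity, a Paley--Zygmund type bound, to get a nontrivial probability that $\|Px_t\|$ is large, while the remainder $R_K$ has one extra factor of $\sqrt{\epsilon t}$.
\end{enumerate}
This shows that from any $v_0$ the process exits $\{\|Pv\|\le\delta\}$ within time $O(\ln\epsilon^{-1})$ with probability bounded below. Conversely, once $\|Pv\|\ge\delta$, the spectral gap keeps it there for a time polynomial in $\epsilon^{-1}$, since $z$ moves only by $O(\sqrt{\epsilon t})$. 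Combining the two by a renewal or occupation-time argument, the stationary mass of $\{\|Pv\|\le\delta\}\cap\{z\in K\}$ tends to $0$. Letting $\delta\downarrow0$ and enlarging $K$ gives $\liminf\inf_\mu\int\Lambda^\epsilon d\mu\ge\lambda^*$, which proves \eqref{eq:main-second} and, with the sandwich above, \eqref{eq:main-first}. I expect the uniform chaos lower bound in step 2 to be the hardest point, in particular handling degenerate or complex top eigenvalues and the non-smooth dependence of $P_{z}$ on $z$.
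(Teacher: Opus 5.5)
Your proposal follows essentially the same route as the paper. You pass to $\epsilon=0$ limit measures via the Feller property and the fixed $z$-marginal, and use \Cref{lem:description-of-Pinv} to reduce to showing these limits give no mass to $\{P_0(z)v=0\}$. You then get this from an $O(\ln\epsilon^{-1})$ exit time out of $\{\|Pv\|\le\delta\}$ (chaos expansion, \Cref{as:nondegen}, hypercontractivity, remainder control), combined with polynomial-time persistence and an occupation-time bound. Your compact set with a uniform spectral gap is a workable substitute for the finite cover in \Cref{lem:construction-of-mn}, since $\{\eta^*\ge\gamma\}$ is closed and $\Prb(\eta^*(Z)<\gamma)\to0$.

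The one link you leave implicit is that a lower bound on $\|Px_t\|$ alone does not make $\|Pv\|$ large, because $\|x_t\|$ may be larger still. You also need that, on the event $\sup_{s\le t}\|Pv_s\|\le\delta$, the norm $\|x_t\|$ grows at most at roughly the stable rate $\lambda-\eta$. This is \Cref{lem:high-mass-decays}; a Gronwall bound on $(I-P)x$ using $\|x_s\|\le\|(I-P)x_s\|/(1-\delta)$ also gives it. You then take $t=C\ln\epsilon^{-1}$ with $C\eta>K/2$, where $K$ is the chaos order (e.g.\ $C=(K+1)/(2\eta)$ as in the paper), so that the two bounds are incompatible.
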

The proof of this theorem is postponed to Section~\ref{sec:main}.

\begin{rem}\label{ex:degenerate}
As already alluded to in \Cref{rem:weakCond}, \Cref{as:nondegen} is strictly weaker than 
the parabolic Hörmander condition. Indeed, writing $\fg$ for the Lie algebra 
generated by $S$ and the identity, \Cref{as:nondegen} states that the canonical representation
of $\fg$ on $\R^n$ is irreducible, while Hörmander's condition for the 
projective process $(z^\epsilon(t),v^\epsilon(t))$ (for any fixed $\epsilon > 0$) is equivalent to 
the possibly stronger property that $\fg x = \R^n$ for all $x \in \R^n \setminus \{0\}$.

Consider then the following example with $n=5$, $d=3$, and $A=0$.
We identify $\R^5$ with the space $H$ of traceless symmetric real $3\times 3$ matrices and we
pick a basis $\{\Omega_i\}_{i \le 3}$ for $\so(3)$.
For $x \in H$, we then set
\begin{equ}[e:bracket]
B_i x = [\Omega_i, x]\;.
\end{equ}
(Here $[\place,\place]$ denotes the commutator, recalling that both $\Omega_i$ and $x$ are linear maps
on $\R^3$.) Since we set $A=0$ and the $B_i$ are antisymmetric if we endow $H$ with the Frobenius norm,
the processes $x_t$ and $v_t$ coincide in this example, provided that $|x_0| = 1$.
On the other hand, the $B_i$ are the generators of conjugation with rotation matrices, so there is
an $SO(3)$-valued process $O_t$ such that
\begin{equ}
x_t = O_t x_0 O_t^{-1}\;.
\end{equ}
In particular, the spectrum of $x_t$ remains constant in time,
$\sigma(x_t) = \sigma(x_0)$, which immediately implies that 
$P^\epsilon_{\invm}$ contains infinitely many ergodic invariant measures.

To show that \Cref{as:nondegen} holds, it remains to note that the representation 
of $\so(3)$ on $H$ given by \eqref{e:bracket} is irreducible, since it is nothing but the standard
``spin $2$'' representation of dimension $5$. (It is also easy to see this ``by hand'' since every
$x \in H \setminus \{0\}$ can be diagonalised by an element of $SO(3)$ and the diagonal matrices obtained by ordering 
its eigenvalues by ascending and descending absolute value are linearly independent. The linear span of its 
$SO(3)$ orbit must therefore contain all traceless diagonal matrices and therefore all of $H$.)
In this example, \Cref{thm:main} is of course overkill since \eqref{eq:main-first} and \eqref{eq:main-second}
hold trivially ($A,B_i$ are all antisymmetric so all sides of \eqref{eq:main-first}, \eqref{eq:main-second} are $0$).
\end{rem}

\section{Applications}\label{sec:applications}
Below we use \Cref{thm:main} to give precise asymptotics for the Lyapunov exponents of several fluid models subjected to degenerate stochastic forcing, and as a corollary we deduce that they are positive under certain regimes. It is easy to show that these Lyapunov exponents become negative under other regimes. Combined with \Cref{thm:persistence-extinction} (see below), which implies non-uniqueness of invariant measures in the positive exponent regime and uniqueness in the negative regime, we show a bifurcation of the long-term behavior which depends on the viscosity (in the Navier--Stokes case) or the strength of the noise (in the Lorenz 96 and Lorenz 63 systems).

\subsection{2D Navier--Stokes}\label{sec:sec:2d-navier-stokes}
In \cite{hairer-mattingly}, the authors study stochastic versions of the deterministic 2D Navier--Stokes equations on the two-dimensional torus $\T^2 = [-\pi,\pi]^2$:
$$\partial_t u + (u \cdot \nabla)u = \nu \Delta u - \nabla \rho \,, \quad \operatorname{div} u = 0$$
where $u = (u_1,u_2): \T^2 \times [0,\infty) \to \R^2$, $\rho: \T^2 \to \R$, $\nu > 0$ is called the viscosity, 
the Laplacian $\Delta$ acts componentwise, and
$$(u \cdot \nabla)u \coloneqq (u_1 \partial_{1}u_1 + u_2\partial_{2}u_1, u_1 \partial_{1}u_2 + u_2\partial_{2}u_2) \;.$$
Since $u$ is divergence-free, it is completely determined by its curl
$$w \coloneqq \nabla \wedge u \coloneqq \partial_{2}u_1 - \partial_{1}u_2\,,$$
so we may instead study the equations for $w: \T^2 \times [0,\infty) \to \R$. Alternatively, we can study the Fourier modes $w_k: [0,\infty) \to \C$ of $w$ given for each $k \in \Z^2$ by
$$w_k(t) \coloneqq \frac{1}{4\pi^2}\int_{\T^2} w(x,t)e^{-ik \cdot x}dx \,,$$
which satisfy $w_k = \overline w_{-k}$.
As explained in \cite[Eqs~2.1--2.2]{hairer-mattingly}, we arrive at
$$dw_k = \Big[-\nu|k|^2w_k - \frac{1}{4\pi} \sum_{j+l=k}\ip{j^\perp,l}(|l|^{-2}-|j|^{-2})w_lw_j\Big]dt \,,$$
where $j^\perp = (j_1,j_2)^\perp = (j_2,-j_1)$ and we assume $w$ has $0$ mean, meaning $w_0 = 0$.

In \cite{hairer-mattingly}, additive noise is applied to some of the Fourier modes, and if enough modes are forced then the system has a unique invariant measure (\cite[Theorem 2.1]{hairer-mattingly}). In \cite[Theorem 2.3]{hairer-mattingly} the authors mention ways of adding degenerate forcing so that perhaps there is an additional invariant measure, and they conjecture that when the viscosity $\nu$ is small enough then this is indeed the case (\cite[Remark 2.4]{hairer-mattingly}). Although this is not true in general (see \Cref{rem:bad-case}), we show below that this is indeed the case, at least at the level of the Galerkin truncations, 
when the modes $\{\pm (5,0), \pm(0,5),\pm(3,4),\pm(4,3),\pm(-3,4),\pm(-4,3)\}$ are forced. Since we consider truncations
and we always set the zero mode to $0$, we will write 
\begin{equ}
\Z^2_{\star,N} = \{k \in \Z^2 \mid k \neq (0,0), \|k\|_\infty \leq N \}\;,
\end{equ}
for the set of modes that we track.
\begin{thm}\label{thm:navier-stokes}
    Let $N \in \N$ and consider the following SDE:
    \begin{equation}\label{eq:stoch-navier-stokes}
        dw_k = \Big[-\nu|k|^2w_k - \frac{1}{4\pi} \sum_{j+l=k}\ip{j^\perp,l}(|l|^{-2}-|j|^{-2})w_lw_j\Big]dt + 1_{|k| = 5}dW^k_t \,,
    \end{equation}
    where $w(t) \in H$, with $H$ being the real vector space defined by
    $$H \coloneqq \{w : \Z^2_{\star,N}\to\C \mid w_k = \overline w_{-k} \} \,,$$
    and $W^k_t$ are $\C$-valued Brownian motions which are independent except for the reality condition
   $W^k = \overline{W^{-k}}$. Let $H_I \coloneqq \{w \in H \mid  w_k = 0 \text{ for all } k \notin I\}$, where $I \coloneqq \{k \in \Z^2 \mid |k| = 5\}$. Then for all $N$ large enough there are $0 < \nu_* \le \nu^* < \infty$
    (possibly depending on $N$) such that:
    \begin{itemize}
        \item When $\nu < \nu_*$, \eqref{eq:stoch-navier-stokes} has multiple invariant measures, at least one of which gives $H_I$ measure $0$. Furthermore, for all $\delta > 0$ there is some $\eta > 0$ such that for all $w(0) \notin H_I$,
        \begin{equation}\label{eq:stoch-persistence-eq}
            \limsup_{t \to \infty} \frac{1}{t}|\{s \leq t \mid d(w(s), H_I) < \eta \}| < \delta \quad a.s.\,,
        \end{equation}
        where $|\cdot|$ denotes Lebesgue measure on $[0,t]$ and $d(w,H_I) \coloneqq \inf_{z \in H_I} \|w - z\|$ is the distance from $w$ to $H_I$.
        \item When $\nu > \nu^*$, \eqref{eq:stoch-navier-stokes} has a unique invariant measure, and it is supported on $H_I$. Furthermore, almost surely
        $$\limsup_{t \to \infty} \frac{1}{t}\ln d(w(t),H_I) < 0 \,.$$
    \end{itemize}
\end{thm}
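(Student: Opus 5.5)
The argument splits along the invariant subspace $H_I$. The nonlinearity does not map $H_I$ outside itself, because every mode with $|k|=5$ is an eigenfunction of the Laplacian with the same eigenvalue: for $j,l \in I$ the factor $|l|^{-2}-|j|^{-2}$ vanishes. Hence $H_I$ is invariant and the dynamics on it is the linear Ornstein--Uhlenbeck process $dz_k = -25\nu z_k\,dt + dW^k_t$. Linearising \eqref{eq:stoch-navier-stokes} around a trajectory in $H_I$ gives, for the transverse component $x = w|_{I^c}$, exactly an equation of the form \eqref{eq:x-def},
\[
dx = \bigl(-\nu L + B(z)\bigr)x\,dt\,,
\]
where $L$ is the diagonal operator $|k|^2$ on the modes in $\Z^2_{\star,N}\setminus I$ and $B(z)x$ collects the products $w_lw_j$ with exactly one of $j,l$ in $I$.

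Next I rescale to reach the slow--fast regime of \Cref{thm:main}. I set $z = \nu^{-1/2}\tilde z$, so that $\tilde z$ has a $\nu$-independent stationary law, and then rescale time by a power of $\nu$. With $t = s\,\nu^{-1/2}$, the $x$-equation becomes $dx/ds = (-\nu^{1/2}L + B(\tilde z))x$, while $\tilde z$ relaxes at rate $25\nu\cdot\nu^{-1/2} = 25\nu^{1/2}$. So we are in the setting of \Cref{thm:main} with $\epsilon \sim \nu^{1/2}$, $A^\epsilon = -\epsilon L$, $A = 0$ and $D = 25\,\mathrm{Id}$. This gives
\[
\nu^{-1/2}\lambda_{\nu} \to \lambda^* = \E[\lambda(B(Z))]\,.
\]
\Cref{as:nondegen} with $A=0$ reduces to the requirement that the matrices $B_k$, $k\in I$, have no common nontrivial invariant subspace on the transverse modes.

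This reduction leaves two things to prove, and I expect the first to be the main obstacle. First, I must verify the irreducibility for $N$ large. My plan is to use that $B_k$ couples mode $m$ to $m\pm k$, with coefficient $\langle k^\perp, m\rangle(|m|^{-2}-25^{-1})$ up to sign conventions. Taking $k$ along the twelve vectors of $I$, which generate $\Z^2$ as a lattice, and using the truncation boundary, I would take an invariant subspace, pick a vector in it, and apply products of the $B_k$ to isolate a single Fourier mode (diagonal structure, for instance by exploiting extremal modes near the truncation boundary). From there I would propagate to all modes, checking that the coefficients vanish only on lines $\langle k^\perp,m\rangle=0$ or on the circle $|m|=5$, and that these obstructions can be bypassed using the different directions in $I$. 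Second, I must show $\lambda^*>0$. Since $B(z)$ is traceless (its diagonal entries vanish, as $B$ shifts modes), $\lambda(B(z))\ge 0$ for every $z$, and $\lambda(B(z))>0$ for some $z$ forces $\E\lambda>0$ by continuity. To find such a $z$, I would exhibit a concrete finite triad or quartet of modes where $B(z)$ has a real positive eigenvalue, for example the classical Fjørtoft-type instability of a single $|k|=5$ mode acting on a pair of larger modes.

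Finally, I conclude with \Cref{thm:persistence-extinction}. For small $\nu$ the transverse exponent is $\sim \nu^{1/2}\lambda^*>0$, so persistence holds: there is an invariant measure giving $H_I$ measure zero, together with the time-average bound \eqref{eq:stoch-persistence-eq}. For large $\nu$, the energy estimate $\tfrac{d}{dt}\|x\|^2 \le (-2\nu + C|z|)\|x\|^2$ uses $|m|\ge 1$ and the fact that $|z|$ has stationary size $\nu^{-1/2}$. It gives a transverse exponent of at most $-2\nu + C\nu^{-1/2}<0$, hence extinction, a unique invariant measure supported on $H_I$, and exponential convergence of $d(w(t),H_I)$ to zero.
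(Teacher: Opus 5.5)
Your overall plan matches the paper's: split along the invariant subspace $H_I$, rescale into the slow--fast form of \Cref{thm:main} with $A=0$, check \Cref{as:nondegen} via the shift structure, use tracelessness plus one unstable $z$ to get $\lambda^*>0$, and conclude with \Cref{thm:persistence-extinction}. Your large-$\nu$ bound is fine.

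\textbf{The rescaling is wrong.} With $\tilde z=\nu^{1/2}z$ and $t=\nu^{-1/2}s$ one gets $dx/ds=(-\nu^{1/2}L+\nu^{-1}B(\tilde z))x$, so the equation is not in the form of \Cref{thm:main}. The consistent choice is $t=\nu^{1/2}s$. This gives $dx/ds=(-\nu^{3/2}L+B(\tilde z))x$, with relaxation rate $25\nu^{3/2}$ and noise $\nu^{3/4}$, i.e.\ $\epsilon=\nu^{3/2}$. Then $\lambda_\nu=\nu^{-1/2}\lambda^\epsilon$, so $\nu^{1/2}\lambda_\nu\to\lambda^*$: the exponent is of order $\nu^{-1/2}$, not $\nu^{1/2}$. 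The sign is unaffected.

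\textbf{The hypotheses of \Cref{thm:persistence-extinction} are not checked.} That theorem is about the full nonlinear system, and you never verify its assumptions. The paper rescales the whole system and checks \eqref{eq:Lyapunov-condition}--\eqref{eq:vanishing-condition} with $\bar U=\exp(\frac12|z|^2+\frac12\epsilon^{2/3}|x|^2)$. It uses $\langle y,B(y,y)\rangle=0$ to remove the nonlinearity, and $\Upsilon=|z|^2+\epsilon^{2/3}|x|^2$ for accessibility. What is needed is $\Lambda^->0$, an infimum over all invariant measures of the projective process, i.e.\ \eqref{eq:main-second}.

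\textbf{The genuine gap is positivity of $\lambda^*$; the mechanism you propose cannot work.} Energy and enstrophy conservation together give $\langle B(z)x,(1-25L^{-1})x\rangle=0$. So $B(z)$ is antisymmetric for the form $\sum_m(1-25|m|^{-2})|x_m|^2$. On any invariant set of modes with $|m|>5$ this form is positive definite, so $B(z)$ is similar to an antisymmetric matrix there and has purely imaginary spectrum. A $|k|=5$ pump acting on a pair of larger modes is therefore never unstable. Instability must involve a mode with $|m|<5$ (Fj{\o}rtoft: the pump must be the intermediate wavenumber).

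\textbf{No triad or quartet is invariant for large $N$.} For $z=e_{(5,0)}$, $B(z)$ couples $m$ to $m\pm(5,0)$ with nonvanishing coefficients all the way to the truncation boundary. You must therefore find a positive eigenvalue of a tridiagonal operator on a chain of length about $2N/5$, for every large $N$. The paper uses the chain $\{(5n,1)\}$, which contains $(0,1)$ and hence the single negative coefficient $a_0=-\frac{24}{25}$, with all other $a_n>0$. On the subspace $y_n=(-1)^ny_{-n}$ it builds a candidate eigenvector by backward recursion from $y_K=1$. It then shows the resulting polynomial has a positive root: for $K$ odd the leading coefficient and the value at $0$ have opposite signs; for $K$ even the polynomial is odd with negative derivative at $0$.

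\textbf{Irreducibility.} Your sketch is plausible but vague. The clean route is to complexify, writing $B_l=C_l+C_{-l}$ and $B_{-l}=iC_l-iC_{-l}$, where each $C_l$ is a pure weighted shift by $l$. Then push the lexicographically minimal support element to the corner $(N,N)$, using shifts by $(\pm5,0),(0,\pm5)$ and composites such as $C_{(4,3)}C_{(-4,3)}C_{(0,-5)}$ (a shift by $(0,1)$). This annihilates every other support element.
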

\begin{rem}
    We also show that the top Lyapunov exponent $\lambda_\nu$ associated to \eqref{eq:stoch-navier-stokes} in the case $w(0) \in H_I$ satisfies $\lim_{\nu \to 0} \nu^{1/2}\lambda_\nu = C > 0$, with an explicit formula for $C$. In other words, the transverse Lyapunov exponent of the unstable invariant measure is of order $\nu^{-1/2}$. We did not formally state this result because similar statements are given in the sections on Lorenz 96 and Lorenz 63 systems (\Cref{thm:lorenz96,thm:lorenz}). The proof is contained in the proof of \Cref{thm:navier-stokes} below by using \eqref{eq:main-first} in place of \eqref{eq:main-second} and a relation analogous to \eqref{eq:alpha-epsilon-relation}.
\end{rem}

\begin{rem}
    We could have tried to prove the same result when forcing collinear Fourier modes $k$, and \Cref{thm:main} would still apply in essentially the same way (the only difference being that one would have to treat each orbit of $\Z^2$ under the actions of shifting by the forced modes $k$ separately). However, in the second case of \cite[Theorem 2.3]{hairer-mattingly} where the forced $k$ form a lattice we cannot immediately apply \Cref{thm:main} because in that case the dynamics on $H_I$ is nonlinear as opposed to the much more tractable Ornstein--Uhlenbeck process we assume here.
\end{rem}

\begin{rem}\label{rem:bad-case}
Not all choices of degenerate forcing display such a transition. For example, consider forcing only the modes $\{\pm(1,0),\pm(0,1)\}$ (this is \eqref{eq:stoch-navier-stokes} but with $1_{\{|k|=5\}}$ replaced by $1_{\{|k|=1\}}$) and consider the Lyapunov function $V(w) = \sum (1-|k|^{-2})|w_k|^2$. Since the 2D Navier--Stokes nonlinearity preserves both energy $\|u\|_{L^2}^2 = \sum |u_k|^2 = \sum |k|^{-2}|w_k|^2$ and enstrophy $\|w\|_{L^2}^2 = \sum |w_k|^2$, one has
    $$dV(w) = -2\nu \sum (|k|^2-1)|w_k|^2dt \leq -4\nu V(w)dt \,.$$
In particular, $\lim_{t \to \infty}V(w_t) = 0$ almost surely so that $w_t \to \{w \mid w_k = 0 \text{ for all } |k| \neq 1\}$. Since the only invariant measure on this set is the invariant measure of the OU process, we obtain 
unique ergodicity regardless of the viscosity $\nu$.
\end{rem}

\begin{rem}
    There is some $\kappa > 0$ such that  we may take $\eta = \delta^\kappa$ (for small enough $\delta$) in \eqref{eq:stoch-persistence-eq}. A sketch of proof is as follows. Setting $W = |z|^2 + \epsilon^{2/3}|x|^2$ we have (similar to \eqref{eq:bound-on-l-bar-u}) that $\Ll W \leq K - cW$ and $\Gamma W \lesssim W$, where $\Gamma$ denotes the carré du champ operator. As in \cite{ForthcomingWork}, one can then show that,
with $V$ agreeing with $-\ln \|x\|$ for $\|x\| \leq 1/2$, 
the quantity $F(X_t) \coloneqq V(X_t) + CW(X_t)$ (where $C > 0$ is a large constant and $X_t$ denotes the process $(z_t,x_t)$ below) decreases linearly fast over any time interval $[0,t]$, except on an event of probability at most $e^{-c't}$ for some fixed $c' > 0$ (independent of $t$). In particular, $\Prb_x(\inf_{t \lesssim F(x)}F(X_t) \gtrsim 1) \leq e^{-c'F(x)}$ and for $F(x) \lesssim 1$ we have $\Prb_x(\sup_{t \lesssim e^{c'A/2}} F(X_t) \geq A) \leq e^{-c'A/2}$. By an argument similar to that at the end of the proof of \Cref{lem:construction-of-mn}, this implies a bound of the form $\mu(F \leq M) \geq (1 - 2e^{-c'M/2})\frac{e^{c'M/2} - M}{e^{c'M/2}}\mu(F \leq 2M)$ for large enough $M$ and any invariant measure $\mu$ with $\mu(H_I) = 0$. Thus $\mu(F\geq M) \leq e^{-c'M/3}$, which implies $\mu(|x| \leq e^{-M})\leq e^{-c'M/3}$ and therefore proves the claim for $\kappa = c'/3$.
\end{rem}

\begin{rem}
Since we do not show that $\nu_* = \nu_*(N)$ can be taken independent of $N$,
it is not clear whether \Cref{thm:navier-stokes} still holds for the full Navier--Stokes system ($N=\infty$). Suppose for each $\nu$ that most of the mass in Fourier space is concentrated in modes $k$ with $|k| \approx N(\nu)$, so to mimic the true dynamics we would need to do a Galerkin truncation of order $N(\nu)$. If $\nu_*(N(\nu)) < \nu$, then \Cref{thm:navier-stokes} does not tell us anything.

\end{rem}

\begin{proof}
    Note that \eqref{eq:stoch-navier-stokes} can be rewritten in the form
    $$dw = [\nu Aw + B(w,w)]dt + d\boldsymbol{W}_t \,,$$
    where $A$ is a linear map, $B$ is a bilinear form, and $\boldsymbol{W}_t$ is a standard (real) $H_I$-valued Wiener process. We define $H_I^\perp \coloneqq \{w \in H \mid w_k = 0 \text{ for all } k \in I\}$. 
    
 
    We can separate $w = (\tilde z,\tilde x)$, where $\tilde z$ is the $H_I$-component of $w$ and $\tilde x$ is the $H_I^\perp$-component. Then abusing notation and using $B = (B_I, B_{I^\perp})$, $\overline B(v,w) = B(v,w) + B(w,v)$ we have
    \begin{equation*}
        \begin{aligned}
            d\tilde z &= [-25\nu\tilde z + \overline B_I(\tilde z,\tilde x) + B_I(\tilde x,\tilde x)]dt + d \boldsymbol{W}_t \\
            d\tilde x &= [\nu A\tilde x + \overline B_{I^\perp}(\tilde z,\tilde x) + B_{I^\perp}(\tilde x,\tilde x)]dt \,,
        \end{aligned}
    \end{equation*}
    where we use the fact that $B(\tilde z,\tilde z) = 0$. Then we make a change of variables again $(z_t,x_t) = (\nu^{1/2}\tilde z_{\nu^{1/2}t}, \tilde x_{\nu^{1/2}t})$ so we obtain
     \begin{equation*}
        \begin{aligned}
            dz &= [-25\nu^{3/2}z + \nu^{1/2}\overline B_I(z,x) + \nu B_I(x,x)]dt + \nu^{3/4}d \boldsymbol{W}_t \\
            dx &= [\nu^{3/2} Ax + \overline B_{I^\perp}(z,x) + \nu^{1/2}B_{I^\perp}(x,x)]dt \,,
        \end{aligned}
    \end{equation*}
    and setting $\epsilon \coloneqq \nu^{3/2}$ we obtain
         \begin{equation}\label{eq:navier-stokes-changed}
        \begin{aligned}
            dz &= [-25\epsilon z + \epsilon^
            {1/3}\overline B_I(z,x) + \epsilon^
            {2/3}B_I(x,x)]dt + \epsilon^{1/2}d \boldsymbol{W}_t \\
            dx &= [\epsilon Ax + \overline B_{I^\perp}(z,x) + \epsilon^{1/3}B_{I^\perp}(x,x)]dt \,.
        \end{aligned}
    \end{equation}
    We are now in the context of \Cref{thm:persistence-extinction} below and one can verify that $\bar U(z,x) \coloneqq e^{\frac{1}{2}|z|^2 + \frac{1}{2}\epsilon^{2/3}|x|^2}$ satisfies the assumptions. Indeed, \eqref{eq:Lyapunov-condition} is satisfied since
    \begin{equation}\label{eq:bound-on-l-bar-u}
        \begin{aligned}
            \Ll \bar U &= \bar U\Big[-25\epsilon |z|^2 + \epsilon\ip{Ax,\epsilon^{2/3}x} + \ip{(z,\epsilon^{1/3}x),B((z,\epsilon^{1/3}x),(z,\epsilon^{1/3}x))} + \frac{1}{2}\epsilon(|z|^2 + \dim H_I)\Big] \\
            &= \bar U\Big[-\frac{49}{2}\epsilon |z|^2 + \epsilon\ip{Ax,\epsilon^{2/3}x} + \frac{1}{2}\epsilon\dim H_I\Big] \\
            &\leq \epsilon \bar U\Big[-|z|^2 - \epsilon^{2/3}|x|^2 + \frac{1}{2}\dim H_I\Big] \\
            &\leq K - \frac{\epsilon}{2}\bar U\Big(\dim H_I + |z|^2 + \epsilon^{2/3}|x|^2\Big)\,,
        \end{aligned}
    \end{equation}
    
    where the second equality uses $\ip{y,B(y,y)} = 0$, the first inequality uses $\ip{Ax,x} \leq -|x|^2$, and
    $$K \coloneqq \sup_{\epsilon \in [0,1], |z|^2 + \epsilon^{2/3}|x|^2 \leq 2\dim H_I} \epsilon \bar U\Big[-\frac{1}{2}|z|^2 - \frac{\epsilon^{2/3}}{2}|x|^2 + \dim H_I\Big] \in (0,\infty) \,.$$
    (Indeed, if $a > 2\dim H_I$ then $-a + \dim H_I/2 \leq -a/2 - \dim H_I/2$.)
    Then \eqref{eq:vanishing-condition} is satisfied because
    \begin{align*}
        |\ip{\epsilon Ax + \overline B_{I^\perp}(z,x) + \epsilon^{1/3}B_{I^\perp}(x,x),x}| &\lesssim |x|^2(1 + |z| + \epsilon^{1/3}|x|) \\
        &\lesssim |x|^2\Big(1 + |z|^2 + \epsilon^{2/3}|x|^2\Big) \\
        &\lesssim |x|^2\Big(2K - \frac{\Ll \bar U}{\bar U}\Big) \,,
    \end{align*}
    where the last inequality is due to \eqref{eq:bound-on-l-bar-u} and $\lesssim$ means inequality up to a multiplicative constant independent of $(z,x)$. Also, \eqref{eq:accessibility-condition} is satisfied with $\Upsilon(z,x) \coloneqq |z|^2 + \epsilon^{2/3}|x|^2$ using an almost identical computation to \eqref{eq:bound-on-l-bar-u} which we leave to the reader. Following \Cref{thm:persistence-extinction} \eqref{eq:bigZ-v-process}, we define the process
    \begin{equation}\label{eq:final-stokes}
          \begin{aligned}
            dZ &= -25 \epsilon Zdt + \epsilon^{1/2}d\boldsymbol{W}_t \\
            dv &= [\epsilon Av + \overline B_{I^\perp}(Z,v) - \Lambda^\epsilon(Z,v)v]dt \,,
        \end{aligned}
    \end{equation}
    where
    $$\Lambda^\epsilon(Z,v) \coloneqq \ip[\Big]{\epsilon Av + \overline B_{I^\perp}(Z,v),v} \,,$$
    and the quantities
    \begin{align*}
     \Lambda^{+}(\epsilon) &\coloneqq \sup_{\mu^\epsilon \in P^\epsilon_{\invm}} \int \Lambda^\epsilon(z,v)d\mu^\epsilon(z,v) \\
        \Lambda^{-}(\epsilon) &\coloneqq \inf_{\mu^\epsilon \in P^\epsilon_{\invm}} \int \Lambda^\epsilon(z,v)d\mu^\epsilon(z,v) \,,
    \end{align*}
    where $P^\epsilon_{\invm}$ denotes the set of all invariant measures of $(Z_t,v_t)$ with $|v_t| = 1$. By \Cref{thm:persistence-extinction}, our \Cref{thm:navier-stokes} will follow if we show that
    \begin{equation}\label{eq:limits-of-lambda-plus-minus}
        \liminf_{\epsilon \to 0} \Lambda^-(\epsilon) > 0 \quad \text{and} \quad \limsup_{\epsilon \to \infty} \Lambda^+(\epsilon) < 0
    \end{equation}
    (our changes of variables do not affect the conclusions of either theorem and $\epsilon \to 0,\infty$ iff $\nu \to 0,\infty$). Since $\ip{Av,v} \leq -1$ and $\sup_{\mu^\epsilon \in P^\epsilon_{\invm}} \int \ip{\overline B_{I^\perp}(z,v),v}d\mu^\epsilon(z,v) \leq C$ where $C$ is independent of $\epsilon$ ($Z_t$ has the same invariant measure for all $\epsilon > 0$), we conclude $\limsup_{\epsilon \to \infty} \Lambda^+(\epsilon) = -\infty$.

    For the $\epsilon \to 0$ case, we use \Cref{thm:main} \eqref{eq:main-second} (note that \eqref{eq:final-stokes} agrees with \eqref{eq:zvr-system-def} with $A^\epsilon(z) = \epsilon A + \overline B_{I^\perp}(z,\cdot)$), so it remains to verify \Cref{as:nondegen} and show
    $$\lambda^* \coloneqq \E[\lambda(\overline B_{I^\perp}(Z,\cdot))] > 0$$
    where $Z$ has the invariant law and $\lambda(\cdot)$ is the top eigenvalue (see \Cref{def:sigma-lambda}). In the notation of \Cref{def:matrix-notation} we have $A = 0$ and $B_l = \overline B_{I^\perp}(e_l,\cdot)$, where $\{e_l\}_{l \in I}$ is the basis for the (real) vector space $H_I$ given by
    $$e_l = \begin{cases}
        f_l + f_{-l} & \text{if } l \in J \coloneqq \{l \in I \mid l_2 > 0 \text{ or } l = (l_1,0) \text{ for some } l_1 > 0\} \\
        if_{-l} - if_{l} & \text{if } l \notin J
    \end{cases}$$
    where $\{f_l\}_{l \in \Z^2_{\star,N}}$ is the standard (complex) basis of $\C^{\Z^2_{\star,N}}$.
    Note that $\operatorname{tr}(\overline B_{I^\perp}(z,\cdot)) = 0$ for all $z$, which implies $\lambda(\overline B_{I^\perp}(z,\cdot)) \geq 0$. Thus, we are done once we verify \Cref{as:nondegen} and show the existence of a $z$ such that $\lambda(\overline B_{I^\perp}(z,\cdot)) > 0$ (which implies it for a set of $z$ of positive measure by continuity of $\lambda$, see \Cref{lem:cty-of-eigenspaces}).

    To verify \Cref{as:nondegen}, first note that our operators $B_l$ can be viewed as operators acting on $H_I^\perp$.
    In particular, for $l \in J$ they have the form
    \begin{equation}\label{eq:bl-def}
        \begin{aligned}
            (B_lw)_k &= -\frac{1}{2\pi} \ip{k^\perp,l}\Big[(|l|^{-2} - |k-l|^{-2})w_{k-l} - (|l|^{-2} - |k+l|^{-2})w_{k+l}\Big] \\
            (B_{-l}w)_k &= -\frac{1}{2\pi} \ip{k^\perp,l}\Big[(|l|^{-2} - |k-l|^{-2})iw_{k-l} + (|l|^{-2} - |k+l|^{-2})iw_{k+l}\Big]
        \end{aligned}
    \end{equation}
    (see \eqref{eq:stoch-navier-stokes} and set $j = k - l$). Then we can rewrite
    \begin{equation}\label{eq:b-in-terms-of-cde}
        B_l = C_l + C_{-l} \quad \text{and} \quad B_{-l} = iC_l - iC_{-l}
    \end{equation}
    where
    \begin{equation}\label{eq:cde-def}
        \begin{aligned}
            (C_lw)_k &= c_{k,l} w_{k-l} \;,\qquad c_{k,l} = -\frac{\ip{k^\perp,l}}{2\pi}(5^{-2} - |k-l|^{-2}) \,.
        \end{aligned}
    \end{equation}
    Fix notation $S = \C^I$ and $S^\perp = \C^{\Z^2_{\star,N} \setminus I}$, which are naturally viewed as subspaces of $\C^{\Z^2_{\star,N}}$. Note that by \eqref{eq:b-in-terms-of-cde} if there is a real invariant subspace $W \subset H_I^\perp$ for $\{B_l \mid l \in I\}$ then $W + iW \subset S^\perp$ is a complex invariant subspace for $\{C_l \mid l \in I\}$. Thus, to verify \Cref{as:nondegen} it suffices to show that the collection $\{C_l \mid l \in I\}$, viewed as linear operators on $S^\perp$, 
admits no non-trivial invariant subspace. In fact we will show that, assuming $N$ is large enough, for any nonzero $v \in S^\perp$ and standard basis vector $e_k$, where $k \in \Z^2_{\star,N} \setminus I$, one can find some $K > 0$ and indices $\ell_1,\ldots,\ell_K$ such that 
    $C_{\ell_K} \cdots C_{\ell_1} v \propto e_k$. This implies that any non-zero invariant subspace
 must contain all the $e_k$'s and therefore all of $S^\perp$.

To see that such a product exists, fix $v = \sum_j c_j e_j$, where $c_j =0$ 
for $j\not\in \Z^2_{\star,N}\setminus I$. Note that one has
\begin{equ}
C_\ell e_j \propto e_{j+\ell}\;,
\end{equ}
and $C_\ell e_j = 0$ if and only if either $j+\ell \not \in \Z^2_{\star,N}\setminus I$ or $\ell$ and $j$ are parallel.
In other words, if we write $\supp v = \{j \in \Z^2_{\star,N}\,:\, c_j \neq 0\}$, we have the recursion
\begin{equ}
\supp C_\ell v = (\ell + \supp v) \cap \bigl(\Z^2_{\star,N}\setminus (I \cup \R \ell)\bigr)\;.
\end{equ}
Whenever $N$ is large enough,\footnote{It suffices that the square 
$(N-10,N]\times (N-10,N]$ does not intersect $\bigcup_{v \in I} \R v$, which is the case when $N > 40$.} 
we now claim that, for every $j \in \Z^2_{\star,N}\setminus I$, one can find a sequence $\ell_m$ such that 
$C_{\ell_K}\cdots C_{\ell_1} e_j \propto e_{(N,N)}$ and doesn't vanish.
First, we note that one can find a sequence
$\ell_m \in \{(\pm5,0),(0,\pm 5)\}$ 
such that $C_{\ell_K}\cdots C_{\ell_1} e_j \neq 0$ and such that $\hat \jmath \coloneqq j + \sum_m \ell_m \in (N-5,N]\times (N-5,N]$.
Writing $C^\uparrow = C_{(4,3)}C_{(-4,3)}C_{(0,-5)}$, and similarly for $C^\to$, we then note that 
$C^\uparrow e_{\hat \jmath} \propto e_{\hat \jmath + (0,1)}$ and doesn't vanish, which immediately yields the claim.
By choosing $j \in \supp v$ minimal
for the lexicographic order on $\Z^2$, this guarantees that $\supp (C^\to)^{N-\hat\jmath_1} (C^\uparrow)^{N-\hat\jmath_2} C_{\ell_K}\cdots C_{\ell_1} v = \{(N,N)\}$. Reverting 
these steps allows one to move $v$ to a multiple of $e_k$ for any $k \in \Z^2_{\star,N}\setminus I$.

    Finally, we need to show that there is some $z$ such that $\lambda(\overline B_{I^\perp}(z,\cdot)) > 0$. We will show that $B_{(5,0)}$ has a positive real eigenvalue, and thus the claim is true for $z = e_{(5,0)}$. We construct the eigenvector $w$ explicitly as follows. Let $K = \lfloor N/5 \rfloor$ and notice that the subspace
    $$\bigg\{w = \sum_{n=-K}^K y_ne_{(5n,1)}\,\bigg|\, y_n = (-1)^ny_{-n}\bigg\} \cong \{y = (y_0,\dots,y_K) \mid y_i \in \C\}$$
    is invariant for $B_{(5,0)} \eqqcolon T$, where
    $$(Ty)_n = \begin{cases}
        a_{n+1}y_{n+1} - a_{n-1}y_{n-1} & \text{if } n > 0 \\
        2a_1y_1 &\text{if } n = 0
    \end{cases}\,, \quad a_n \coloneqq \frac{1}{25} - \frac{1}{25n^2+1} \,.$$
    Thus, it suffices to find an eigenvector $y$ for $T$.
    
    
    To construct a candidate eigenvector with eigenvalue $\lambda$, we recursively define $y$ by setting 
    $y_K = 1$ and then
    \begin{equation}\label{eq:first-eval}
        y_n = a_n^{-1}(-\lambda y_{n+1} + a_{n+2}y_{n+2}) \,,
    \end{equation}
    where $y_{K+1} = 0$ by convention. This guarantees that $(Ty)_n = \lambda y_n$ for $n > 0$. To have $(Ty)_0 = \lambda y_0$ we need to check that
    \begin{equation}\label{eq:second-eval}
        -\lambda y_0 + 2a_1y_1 = 0 \,.
    \end{equation}
    For both the cases where $K$ is odd and $K$ is even we show that it is always possible to choose a positive real $\lambda$ for which \eqref{eq:second-eval} is true, which will finish the proof. First note that
    $$y_n =\sum_{k=0}^{K-n} c_{n,k} \lambda^k \,, \quad \text{with $c_{n,k} = 0$ whenever $k \equiv K -n-1 \pmod 2$}$$
    (a polynomial which is either odd or even depending on the parity of $K-n$) where the coefficients
    $c_{n,k}$ satisfy a recursion in terms of the $a_l$'s. One can verify that
    \begin{equ}[e:exprckn]
    c_{n,K-n} = (-1)^{K-n}a_n^{-1}a_{n+1}^{-1}\dots a_{K-1}^{-1} \quad \text{and for $K-n$ even } \quad c_{n,0} = a_n^{-1}a_K \,.
    \end{equ}
    Keep also in mind that $a_n > 0$ for $n > 0$, while $a_0 < 0$.
    
    The left-hand side of \eqref{eq:second-eval} is a polynomial $P_{K,a}(\lambda)$ with
    leading term given by $-c_{0,K}\lambda^{K+1}$.
    When $K$ is odd one has $c_{0,K}>0$ by \eqref{e:exprckn}, so the leading term is negative. 
    Furthermore $P_{K,a}(0) = 2a_1a_1^{-1}a_K > 0$ by \eqref{e:exprckn}, so $P_{K,a}$ 
    must indeed have at least one positive real root.

    When $K$ is even, $P_{K,a}$ is odd and has a positive leading order term, so it suffices to check that
    \begin{equ}[e:wantedBoundP']
    P_{K,a}'(0) = -c_{0,0} + 2a_1c_{1,1}  < 0 \,.
    \end{equ}
    Differentiating  \eqref{eq:first-eval} in $\lambda$ and evaluating at $\lambda = 0$ yields the recursion
    \begin{equ}
    a_n c_{n,1} = - c_{n+1,0} + a_{n+2} c_{n+2,1}\;,
    \end{equ}
    so that 
    \begin{equ}
    a_1 c_{1,1} = -\bigl(c_{2,0} + c_{4,0} + \dots + c_{K,0}\bigr) \le -1\;.
    \end{equ}
    Since $c_{0,0} = a_0^{-1}a_K$ with $a_0 = - \frac{24}{25}$ and $a_K < \frac{1}{25}$, the bound
    \eqref{e:wantedBoundP'} follows at once.
\end{proof}
\begin{rem}
    The last step of the proof could have also been done using the general results about Hamiltonian operators in \cite{hamiltonians} since $T$ is the composition of the diagonal operator $(y_n)_{|n| \leq K} \mapsto (a_ny_n)_{|n| \leq K}$ with the antisymmetric operator $(y_n)_{|n| \leq K} \mapsto (y_{n+1}-y_{n-1})_{|n| \leq K}$. This works since $a_0 < 0$ and the remaining coefficients are positive. For example, the case of $K$ odd follows from \cite[Cor.~2.5]{hamiltonians}. The case of $K$ even requires showing that \cite[Eq.~2.18]{hamiltonians} is negative, which is indeed the case, but we opted to write out the more explicit construction above instead.
\end{rem}

\subsection{Lorenz 96 System}
In \cite{transverse-lyap} the authors consider the Lorenz 96 system with weak degenerate forcing given by
\begin{equation}\label{eq:lorenz96}
    du_j = [-\epsilon u_j + (u_{j+1} - u_{j-2})u_{j-1}]dt + \sqrt{\epsilon}1_{j \in I}dW^j_t \,,
\end{equation}
where $K \in \N$, $K \geq 3$, $N = 3K$, $u(t) = (u_0(t), \dots, u_{N-1}(t)) \in \R^N$, $\epsilon > 0$, $I=\{3k\mid k=0,\ldots,K-1\}$, all indices are taken mod $N$ (for example $u_N = u_0$), and $W^j_t$ are independent standard Brownian motions. We also use their notation $u(t) = \phi_{\omega}^t(u(0))$, where $\omega \in \Omega$ is the realization of the noise and $u(0)$ is the initial condition, and $A^t_{u(0),\omega} = D_{u(0)}\phi_{\omega}^t$, the derivative of the flow at $u(0)$. Letting $\{e_j\}_{j=0}^{N-1}$ be the standard basis for $\R^N$, we let $H_I \coloneqq \operatorname{Span}\{e_j \mid j \in I\} \cong \R^K$ and $H_I^\perp \coloneqq \operatorname{Span}\{e_j \mid j \notin I\} \cong \R^{2K}$ so that $\R^N \cong H_I \oplus H_I^\perp$.

The multiplicative ergodic theorem (see \cite[Prop.~3.4]{transverse-lyap}) shows that the transverse Lyapunov exponent
$$\lambda^\perp_\epsilon \coloneqq \lim_{t \to \infty} \frac{1}{t} \ln \|A^t_{u(0),\omega}|_{H_I^\perp} \|$$
(where $A^t_{u(0),\omega}|_{H_I^\perp}$ is the restriction of $A^t_{u(0),\omega}$ to $H_I^\perp$) exists and is constant almost surely for $u(0) \in H_I$ (in the remainder of this section, we treat $\lambda_\epsilon^\perp$ as a real-valued constant as opposed to a random variable which depends on $u(0) \in H_I$). We also note that alternatively
\begin{equation}\label{eq:alternative-transverse-def}
    \lambda_\epsilon^\perp = \max_{i} \lim_{t \to \infty}\frac{1}{t}\ln \|A^t_{u(0),\omega}e_i\|
\end{equation}
where $e_i$ is a fixed basis for $H_I^\perp$. (Indeed, we have $\|A^t_{u(0),\omega}|_{H_I^\perp}\| \geq \|A^t_{u(0),\omega}e_i\|$ by definition, and, choosing a norm for which $e_i$ are orthonormal, which is valid since it does not affect the limits, we have $\|A^t_{u(0),\omega}|_{H_I^\perp} \| \leq \sqrt{\operatorname{dim}H_I^\perp} \max_i \|A^t_{u(0),\omega}e_i\|$. We are also free to swap the $\max_i$ and the $\lim_{t \to \infty}$ because the limits exist for each $i$ by the multiplicative ergodic theorem.)
In \cite[Lem.~3.6]{transverse-lyap} the authors use computer algebra to show that it satisfies
$$\lim_{\epsilon \downarrow 0} \frac{\lambda_\epsilon^\perp}{\epsilon} = \infty \,.$$

As an application of our results (no computer algebra required), we obtain the following sharper estimate on the transverse Lyapunov exponent:

\begin{thm}\label{thm:lorenz96}
For each $z \in \R^K$, let $A(z): \R^{2K} \to \R^{2K}$ be given by
\begin{equation}\label{eq:96-az-def}
    \begin{aligned}
        (A(z)x)_{2k} &= (x_{2k+1} - x_{2k-1})z_k \\
        (A(z)x)_{2k+1} &= (z_{k+1}-z_k)x_{2k}
    \end{aligned}
\end{equation}
(the indices for $x$ are taken modulo $2K$ and those for $z$ are taken modulo $K$) 
and define $\lambda(z)$ to be the largest real part of the eigenvalues of $A(z)$. Then
    \begin{equation}\label{eq:lorenz96-thm}
        \lim_{\epsilon \downarrow 0} \lambda_\epsilon^\perp = \E[\lambda(Z)] > 0 \,,
    \end{equation}
    where $Z$ is an $\R^K$-valued Gaussian random variable with mean $0$ and covariance matrix $\frac{1}{2}I$.
\end{thm}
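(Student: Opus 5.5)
The plan is to reduce \eqref{eq:lorenz96-thm} to \eqref{eq:main-first} of \Cref{thm:main}, exactly as was done for Navier--Stokes. First we linearise \eqref{eq:lorenz96} around a solution lying in $H_I$. For $u(0)\in H_I$ the set $H_I$ is invariant, because each nonlinear term in the equation for $u_{3k}$ contains a factor $u_j$ with $j\notin I$. On $H_I$ the coordinates $z_k\coloneqq u_{3k}$ are independent OU processes, $dz_k=-\epsilon z_k\,dt+\sqrt\epsilon\,dW^{3k}_t$, whose invariant law is $\mathcal N(0,\tfrac12 I)$ and does not depend on $\epsilon$. We then write $x_{2k}\coloneqq\delta u_{3k+1}$ and $x_{2k+1}\coloneqq\delta u_{3k+2}$, and differentiate the drift $-\epsilon u_j+(u_{j+1}-u_{j-2})u_{j-1}$ at a point where only the $u_{3k}$ are nonzero. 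For $j=3k+1$ this gives $(\delta u_{3k+2}-\delta u_{3k-1})u_{3k}$, which matches the first line of \eqref{eq:96-az-def}. For $j=3k+2$ it gives $(u_{3k+3}-u_{3k})\delta u_{3k+1}$, which matches the second line. The $H_I^\perp$ block is therefore triangular with respect to the $H_I$ block (the $H_I^\perp\to H_I$ coupling vanishes on $H_I$), and $x$ solves $dx=(-\epsilon+A(z))x\,dt$. This is \eqref{eq:x-def} with $A^\epsilon=-\epsilon I$, limiting matrix $A=0$, and $B_k=A(e_k)$, with $z$ running at speed $\epsilon$ exactly as in \eqref{eq:zvr-system-def} (taking $D=I$). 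By \eqref{eq:alternative-transverse-def}, $\lambda^\perp_\epsilon$ is the top Lyapunov exponent of this linear system started from generic $x_0$. Hence \eqref{eq:main-first} gives $\lambda^\perp_\epsilon\to\E[\lambda(A(Z))]$, provided \Cref{as:nondegen} holds.

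Since $A=0$, \Cref{as:nondegen} reduces to showing that $\{B_k\}_{k<K}$ has no nontrivial common invariant subspace of $\R^{2K}$. Concretely, $B_k$ maps $e_{2k\pm1}\mapsto\pm e_{2k}$ and $e_{2k}\mapsto e_{2k-1}-e_{2k+1}$ (up to signs and index bookkeeping), and it kills everything else. We will argue through basis vectors. Let $W$ be invariant and contain some $w\ne0$.
\begin{itemize}
\item If $w$ has a nonzero even coordinate $w_{2k}$, then $B_kw=w_{2k}(e_{2k-1}-e_{2k+1})$, up to the contribution $(w_{2k+1}-w_{2k-1})e_{2k}$, which we absorb by applying $B_k$ once more.
\item Products such as $B_kB_k$ and $B_{k+1}B_k$ then isolate individual basis vectors. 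For example, $B_{k+1}$ applied to $e_{2k+1}$ yields $e_{2k+2}$, while $B_{k+1}$ applied to $e_{2k-1}$ gives $0$ when $K\ge3$.
\item In this way we extract a single $e_{2k}$ or $e_{2k+1}$ from $W$, and then propagate around the cycle to obtain all basis vectors.
\end{itemize}
Checking these small products carefully, including the signs and the role of $K\ge3$, is routine but is where care is needed.

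Finally we show $\E[\lambda(A(Z))]>0$. Each $B_k$ is traceless, since every term is off-diagonal, so $\operatorname{tr}A(z)=0$ and $\lambda(A(z))\ge0$ for all $z$. By continuity of $\lambda$ (\Cref{lem:cty-of-eigenspaces}) and full support of the Gaussian, it suffices to exhibit a single $z$ with $\lambda(A(z))>0$. The first choice to try is $z=e_0$, which gives a very sparse $A(z)$. Here $A(e_0)$ acts on $\mathrm{span}\{e_{2K-1},e_0,e_1\}$, up to index conventions, via
\[
x_0'=x_1-x_{2K-1},\qquad x_1'=-x_0,\qquad x_{2K-1}'=x_0 .
\]
This block has eigenvalues $0,\pm i\sqrt2$, so $e_0$ alone does not work. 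The next attempt is $z=e_0-e_1$, or a generic $z$ such as $z_k=k$, and we compute the characteristic polynomial of the resulting small block: a nilpotent-free, non-antisymmetric tridiagonal chain should produce a real positive eigenvalue, by an argument analogous to the $P_{K,a}$ analysis in the Navier--Stokes proof. We expect this explicit positivity step to be the main obstacle. If it proves troublesome, the fallback is to note that $A(z)$ for generic $z$ is not conjugate to an antisymmetric matrix, since $\operatorname{tr}A(z)^2>0$ for a suitable $z$. Indeed, $\operatorname{tr}A(z)^2=2\sum_k z_k(z_{k+1}-z_k)-2\sum_k z_k(z_k-z_{k-1})$ can be made positive, and positivity forces a nonzero real part, hence $\lambda>0$ because the trace is zero.
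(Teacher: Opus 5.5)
Your plan breaks down at \Cref{as:nondegen}, because that assumption is false for $\{A(e_k)\}$ on $\R^{2K}$. The vector $v=\sum_k e_{2k+1}=(0,1,0,1,\dots,0,1)$ satisfies $(A(z)v)_{2k}=(1-1)z_k=0$ and $(A(z)v)_{2k+1}=(z_{k+1}-z_k)\cdot 0=0$. So $A(z)v=0$ for all $z$, and $\R v$ is a common invariant line. Your case analysis only treats $w$ with a nonzero even coordinate. Otherwise $B_kw=(w_{2k+1}-w_{2k-1})e_{2k}$, which vanishes for every $k$ exactly when $w\in\R v$.

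Appealing to ``generic $x_0$'' does not rescue this. From $x_0=v$ the exponent is $-\epsilon$, and $\mathcal N(0,\tfrac12 I)\otimes\delta_{v/\|v\|}$ is invariant with $\int\Lambda^\epsilon\,d\mu=-\epsilon$. Hence both \eqref{eq:main-first} and \eqref{eq:main-second} genuinely fail for the system on $\R^{2K}$.

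The missing step, which the paper carries out, is to split off this direction. Let $P$ be the orthogonal projection onto $\R v$. Since $A(z)v=0$, the component $\tilde x=(\id-P)x$ satisfies the closed equation $d\tilde x=(-\epsilon\tilde x+(\id-P)A(z)\tilde x)\,dt$. The $v$-component is slaved to $\tilde x$, so $\lambda^\perp_\epsilon=(-\epsilon)\vee\tilde\lambda_\epsilon$. You then apply \Cref{thm:main} on the $(2K-1)$-dimensional range of $\id-P$, with $B_k=(\id-P)C_k$ and $C_k=A(e_k)$.

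Nondegeneracy there amounts to $\R v$ being the only nontrivial invariant subspace of $\{C_k\}$. This follows from $C_kC_{k+1}x=x_{2k}e_{2k}$ and $C_k^2x=(x_{2k-1}-x_{2k+1})e_{2k+1}-(x_{2k}+x_{2k-2})e_{2k}$ for $K\ge3$. Your description of $B_k$ on basis vectors is also off; the correct formula is $C_kx=x_{2k-2}e_{2k-1}+(x_{2k+1}-x_{2k-1})e_{2k}-x_{2k}e_{2k+1}$. Finally, the eigenvalues of the quotient operator together with one $0$ are those of $A(z)$, and the quotient operator is traceless. So its top real part equals $\lambda(A(z))$.

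The positivity step has a second real gap. You never exhibit a $z$ with $\lambda(A(z))>0$, the analogy with $P_{K,a}$ is only a hope, and your fallback cannot work. The only nonzero products $A_{ij}A_{ji}$ come from the pairs $\{2k,2k+1\}$; note that $A_{2k-1,2k}=0$, so your second sum is spurious. Hence $\operatorname{tr}A(z)^2=2\sum_k z_k(z_{k+1}-z_k)=-\sum_k(z_{k+1}-z_k)^2\le 0$ for every $z$, and your own expression is nonpositive too.

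The paper instead takes $z^*=(1,2,0,\dots,0)$, for which $(5,5,-2,4,0,\dots,0)$ is an eigenvector of $A(z^*)$ with eigenvalue $1$. Combined with $\operatorname{tr}A(z)=0$, continuity of $\lambda$, and full support of $Z$, this gives $\E[\lambda(Z)]>0$. As a minor point, $A(e_0)$ has spectrum $\{0,\pm i\}$, since $x_{2K-1}'=x_{2K-2}$ rather than $x_0$, though your conclusion $\lambda(A(e_0))=0$ stands. The rest of your plan is fine: the linearisation, the invariant law $\mathcal N(0,\tfrac12 I)$, and the implication from zero trace to $\lambda\ge 0$.
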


\begin{rem}
    As a direct corollary of \Cref{thm:persistence-extinction}, as in \Cref{thm:navier-stokes} we may deduce the existence of multiple invariant measures when $\epsilon$ is small, and when $\epsilon$ is large enough there is convergence to a unique invariant measure on $H_I$.
\end{rem}

\begin{rem}
    In fact, the actual Lyapunov exponent
    $$\lambda_\epsilon \coloneqq \lim_{t \to \infty} \frac{1}{t} \ln \|A^t_{u(0),\omega}\|$$
    is equal to the transverse Lyapunov exponent for small enough $\epsilon$ because
    $$\lim_{t \to \infty} \frac{1}{t} \ln \|A^t_{u(0),\omega}|_{H_I} \| = -\epsilon$$
    and $H_I, H_I^\perp$ are invariant subspaces for $A^t_{u(0),\omega}$ when $u(0) \in H_I$ (see \cite[Lemma 3.2]{transverse-lyap}). Indeed, the linearization $w(t) \coloneqq A^t_{u(0)}w(0)$ of \eqref{eq:lorenz96} on $H_I$ will have no noise (because the noise was additive), and the nonlinearity will disappear on $H_I$, leaving simply $dw_{3k} = -\epsilon w_{3k}$. Thus, we have also shown that $\lim_{\epsilon \downarrow 0} \lambda_\epsilon = \E[\lambda(Z)] > 0$.
\end{rem}

\begin{proof}
First notice that the vector
$$v\coloneqq (0,1,0,1,\dots,0,1) \in \R^{2K} = H_I^\perp \,,$$
satisfies $A(z)v = 0$ for all $z \in \R^K$ (see \eqref{eq:96-az-def}). Thus, \eqref{eq:lorenz96} has a nontrivial invariant subspace $H_I \oplus \R v$, and when restricted to this subspace we see that the component in the direction of $v$ has quite trivial dynamics (made explicit below). Thus, it makes sense to separate $v$ out by decomposing the process $u$ into
\begin{align*}
    \tilde z^\epsilon(t) &\coloneqq (u_0(t),u_3(t),\dots,u_{3K-3}(t)) \\
   w^\epsilon(t) &\coloneqq (u_1(t),u_2(t),u_4(t),u_5(t),\dots,u_{3K-2}(t),u_{3K-1}(t)) \\
    \tilde x^\epsilon(t) &\coloneqq (\Id - P)w^\epsilon(t) \,, \quad Pw \coloneqq K^{-1}\ip{w,v}v \,, \quad y^\epsilon(t) \coloneqq K^{-1}\ip{w^\epsilon(t),v}
\end{align*}
and note that we use the indexing conventions $\tilde x^\epsilon(t)_0 = u_1(t)$, etc.\ with indices taken mod $2K$, $\tilde z^\epsilon(t)_j = u_{3j}(t)$, and $\Id$ is the identity operator. Rewrite \eqref{eq:lorenz96} as
\begin{equation}\label{eq:96-not-linearized}
    \begin{aligned}
        d\tilde z_j &= [-\epsilon \tilde z_j + (\tilde x_{2j} - \tilde x_{2j-2})(\tilde x_{2j-1} + y)]dt + \sqrt{\epsilon} dW_t^{3j} \\
        d\tilde x &= [-\epsilon \tilde x + (\Id - P)A(\tilde z)\tilde x]dt \\
        dy &= [-\epsilon y + K^{-1}\ip{A(\tilde z)\tilde x,v}] dt \,,
    \end{aligned}
\end{equation}
from which it follows that $\lambda_\epsilon^\perp = (-\epsilon) \vee \tilde \lambda_\epsilon$, where $\tilde \lambda_\epsilon$ has the same definition as $\lambda_\epsilon^\perp$ (see \eqref{eq:alternative-transverse-def}) except $e_i$ is a basis of $H_I^\perp \cap \tilde v^\perp$, where $\tilde v = (0,0,1,0,0,1,\dots,0,0,1)$ (in other words, only look at $x$ and not $y$). Thus, to prove \eqref{eq:lorenz96-thm} it suffices to show it with $\lambda^\perp_\epsilon$ replaced by $\tilde \lambda_\epsilon$. Note that
$$\tilde \lambda_\epsilon = \max_{e_i} \lim_{t \to \infty}\frac{1}{t}\ln \|x^\epsilon(t)\| \,, \quad x^\epsilon(0) = e_i$$
(compare to \eqref{eq:alternative-transverse-def}) where the linearization of \eqref{eq:96-not-linearized} in the $H_I^\perp \cap \tilde v^\perp \cong \R^{2K-1}$ directions for initial conditions in $H_I \cong \R^K$ is given by
\begin{align*}
    dz^\epsilon(t)_j &= -\epsilon z^\epsilon(t)_jdt + \sqrt{\epsilon}dW_t^{3j} \\
    dx^\epsilon(t) &= [-\epsilon x^\epsilon(t) + (\Id - P)A(z^\epsilon(t))x^\epsilon(t)]dt \,.
\end{align*}
(In the setting of \Cref{sec:setup} we have $n = 2K - 1$ and $d = K$.)
If we can verify \Cref{as:nondegen} then \Cref{thm:main} implies
$$\lim_{\epsilon \downarrow 0} \tilde \lambda_\epsilon = \E[\lambda((\Id-P)A(Z))] \,,$$
which implies the equality of \eqref{eq:lorenz96-thm} because $\lambda((\Id-P)A(Z)) = 0 \vee \lambda(A(Z)) = \lambda(A(Z))$ ($v$ is an eigenvector of $A(Z)$ with eigenvalue $0$). Following \eqref{eq:96-az-def} and \Cref{def:matrix-notation}, we have $A = 0$ and  $B_k = (\Id-P)C_k$ where
\begin{equ}[e:defCk]
C_k x = x_{2k-2} e_{2k-1} + (x_{2k+1}-x_{2k-1})e_{2k} - x_{2k} e_{2k+1}\;,
\end{equ}
where $\{e_k\}$ is the standard basis for $\R^{2K}$ and all indices are taken modulo $2K$.
Then \Cref{as:nondegen} will be verified if we can show that $\{C_k \mid k = 0,1,\dots,K-1\}$ has no nontrivial invariant subspaces in $\R^{2K}$ besides $\R v$. Indeed, if $W \subset  (\Id - P)\R^{2K}$ is an invariant subspace for all $B_k$, then $W \oplus \R v$ is an invariant subspace for all $C_k$ because $C_k v = 0$ and for $w \in W$ we have $C_kw = B_kw + PC_k w \in W + \R v$.

By \eqref{e:defCk}, each of the $C_k$ maps basis vectors to basis vectors (modulo sign changes) and, 
for any $\ell, m$, one can find
$M \ge 0$ and indices $k_i$ such that $e_m = \pm C_{k_1}\cdots C_{k_M}e_\ell$. 
It therefore remains to show that if $x \in \R^{2K}$ is such that $x \not \in \R v$, then
one can find some polynomial expression of the $C$'s mapping $x$ to a basis vector.
To show this, it suffices to observe that
\begin{equ}
C_{k} C_{k+1} x = x_{2k} e_{2k}\;,\qquad 
 C_k^2 x = (x_{2k-1}- x_{2k+1}) e_{2k+1} - (x_{2k}+x_{2k-2}) e_{2k}\;.
\end{equ}
If $x$ has a non-zero even entry, then the first identity yields the desired property.
If not, then the second identity does, unless the odd entries are all equal, in which case
$x$ is proportional to $v$, but this was ruled out.
Thus, \Cref{as:nondegen} and the equality of \eqref{eq:lorenz96-thm} are verified.

    For the inequality of \eqref{eq:lorenz96-thm}, first note that $\operatorname{tr}(A(z)) = 0$ (all of the diagonal entries are $0$). Thus, the eigenvalues sum to $0$, implying $\lambda(A(z)) \geq 0$. By continuity of $\lambda$ in $z$ (see \Cref{lem:cty-of-eigenspaces} below) it suffices to find a single $z \in \R^K$ such that $\lambda(z) > 0$. One can check by hand using the characteristic polynomial that $z^* = (1,2,0,\dots,0)$ works (see \cite[Lem.~5.7]{transverse-lyap}). Indeed, $(5,5,-2,4,0,0,\dots,0)$ is an eigenvector of $A(z^*)$ with eigenvalue $1$.
\end{proof}

\subsection{Lorenz 63 System}
In \cite{lorenz}, the authors consider the Lorenz 63 system with additive forcing in one direction:
\begin{equation*}
    \begin{aligned}
        dX &= \sigma(Y-X)dt \\
    dY &= [X(\rho - Z) - Y]dt \\
    dZ &= [-\beta Z + XY]dt + \hat \alpha dW_t \,,
    \end{aligned}
\end{equation*}
where $\sigma,\rho,\beta,\hat \alpha > 0$ are constants with $\rho < 1$, $W_t$ is a standard one-dimensional Brownian motion, and $(X_t,Y_t,Z_t) \in \R^3$. In \cite[Theorem 1.1]{lorenz}, they prove a phase transition: when $\hat \alpha$ is small enough there is a unique invariant measure, but when $\hat \alpha$ is large enough there are multiple. As in \Cref{thm:navier-stokes} and our Lorenz 96 example, the uniqueness or nonuniqueness of the invariant measure is completely determined by the sign of a certain Lyapunov exponent (see \Cref{thm:persistence-extinction}).

After a change of variables detailed in \cite[Sec.~2]{lorenz}, the relevant quantity is the transverse Lyapunov exponent $\lambda_\alpha$ of the system
\begin{equation}\label{eq:lorenz}
    \begin{aligned}
        dx &= ydt \\
        dy &= [x(z-2)-2y]dt \\
        dz &= [-\gamma(z - z_*) - x(x+\eta y)]dt + \alpha dW_t \,,
    \end{aligned}
\end{equation}
where $\gamma, \eta > 0$, $z_* \leq 2$, and $\alpha$ is some multiple of $\hat \alpha$.
\begin{rem}
    This change of variables is only done for consistency with \cite{lorenz} where it leads to
    nicer expressions.
\end{rem}

Formally,
$$\lambda_\alpha \coloneqq \lim_{t \to \infty} \frac{1}{t}\ln \sqrt{\hat x_t^2 + \hat y_t^2} \,,$$
where
\begin{equation*}
        d\hat x = \hat ydt \;,\quad
        d\hat y = [\hat x(\hat z-2)-2\hat y]dt \;,\quad
        d\hat z = -\gamma(\hat z - z_*)dt + \alpha dW_t \;.
\end{equation*}
It follows from \cite[Props~3.2--3.3]{lorenz} and a standard argument (see for example \cite[Section 4.2]{extinction}) that $\lambda_\alpha$ is well-defined: the limit exists almost surely for any initial condition with $(\hat x(0),\hat y(0)) \neq (0,0)$ and the value is independent of the initial condition.
Alternatively, $\lambda_\alpha$ can be defined as the $\Lambda^-$ or $\Lambda^+$ in \Cref{thm:persistence-extinction} (here $P_{\invm}$ consists of only one measure). Or one could cite the multiplicative ergodic theorem.

The asymptotics of $\lambda_\alpha$ as $\alpha \to 0$ or $\infty$ are given in \cite[Thm~5.2]{lorenz}, and we note that the $\alpha \to 0$ case is also an immediate consequence of \cite[Thm~3.11]{extinction} (see the discussion in \cite[Sec.~10.2]{extinction}). Below we use \Cref{thm:main} to give an alternative proof of the $\alpha \to \infty$ case:
\begin{thm}\label{thm:lorenz}
      $$\lim_{\alpha \to \infty} \alpha^{-1/2}\lambda_\alpha = \frac{\Gamma(\frac{3}{4})}{2\gamma^{1/4}\pi^{1/2}} \,,$$
    where $\Gamma(z) = \int_0^\infty t^{z-1}e^{-t}dt$ is the gamma function.
\end{thm}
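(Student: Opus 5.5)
The plan is to rescale \eqref{eq:lorenz} so that the linearised $(\hat x,\hat y)$ dynamics takes exactly the form \eqref{eq:zvr-system-def} with $\epsilon = \alpha^{-1/2}$. Then \eqref{eq:main-first} of \Cref{thm:main} gives the limit, and the constant comes from a Gaussian integral.

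\emph{Rescaling.} Set $\zeta_t \coloneqq (\hat z_t - z_*)/\alpha$, which solves $d\zeta = -\gamma\zeta\,dt + dW_t$. Its stationary law is $\mathcal N(0,\frac{1}{2\gamma})$, independently of $\alpha$. Next set $\epsilon \coloneqq \alpha^{-1/2}$ and introduce the new time $s = \alpha^{1/2}t$, with
\begin{equ}
x(s) = \hat x_{t},\qquad y(s) = \alpha^{-1/2}\hat y_{t},\qquad z(s) = \zeta_{t}.
\end{equ}
A direct computation gives
\begin{equ}
dz = -\epsilon\gamma z\,ds + \sqrt{\epsilon}\,d\tilde W_s,\qquad \frac{dx}{ds} = y,\qquad \frac{dy}{ds} = \bigl(z + \epsilon^2(z_*-2)\bigr)x - 2\epsilon y,
\end{equ}
where $\tilde W$ is a standard Brownian motion. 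This is \eqref{eq:zvr-system-def} with $d=1$, $n=2$, $D=\gamma>0$,
\begin{equ}
A^\epsilon = \mat{0}{1}{\epsilon^2(z_*-2)}{-2\epsilon},\qquad B_1 = \mat{0}{0}{1}{0},
\end{equ}
and $A^\epsilon$ is continuous in $\epsilon\in[0,1]$ with $A = A^0 = \mat{0}{1}{0}{0}$. The rescaling of $y$ by a deterministic constant does not change exponential growth rates. Therefore the Lyapunov exponent $\lambda_\epsilon$ of this system, measured in the time $s$, satisfies $\lambda_\alpha = \alpha^{1/2}\lambda_{\epsilon}$.

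\emph{Checking \Cref{as:nondegen}.} The only one-dimensional invariant subspace of $A$ is $\R e_1$. The only one-dimensional invariant subspace of $B_1$ is $\R e_2$. Since $A = \ad_A^0$-free is not itself in $S$, I would instead use $S \ni B_1$ and $\ad_A(B_1) = AB_1 - B_1A = \operatorname{Diag}(1,-1)$. Consequently $\ad_A^2(B_1) = [A,\operatorname{Diag}(1,-1)] = -2A$ lies in the span of $S$, so $S$ contains matrices whose only invariant lines are $\R e_1$ and $\R e_2$ respectively. No line is invariant for both, so there is no nontrivial invariant subspace in $\R^2$.

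\emph{Identifying the limit.} By \eqref{eq:main-first} (applied along any sequence $\alpha_n\to\infty$), we get $\lambda_\epsilon \to \E[\lambda(A(Z))]$ with $Z\sim\mathcal N(0,\frac1{2\gamma})$. The matrix $A(z) = \mat{0}{1}{z}{0}$ has eigenvalues $\pm\sqrt z$, so $\lambda(A(z)) = \sqrt{z_+}$. Substituting $u=\gamma z^2$ gives
\begin{equ}
\E\sqrt{Z_+} = \sqrt{\tfrac{\gamma}{\pi}}\int_0^\infty z^{1/2}e^{-\gamma z^2}\,dz = \frac{\Gamma(\frac34)}{2\gamma^{1/4}\pi^{1/2}},
\end{equ}
which is the claimed constant. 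Combined with $\alpha^{-1/2}\lambda_\alpha = \lambda_\epsilon$, this proves the theorem.

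The main obstacle is bookkeeping rather than analysis: the time change must produce exactly the $\sqrt\epsilon$ noise and the $\epsilon D$ drift required by \eqref{eq:zvr-system-def}, and the $\epsilon$-dependence has to be absorbed into a continuous family $A^\epsilon$ rather than into $B$. One should also remember that $\lambda_\alpha$ is deterministic here by \cite{lorenz}, so the almost sure limit in \eqref{eq:main-first} identifies it.
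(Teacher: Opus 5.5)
Your proposal is correct and follows the paper's proof essentially verbatim. You use the same time change with $\epsilon=\alpha^{-1/2}$; your $(x,y)$ is just $\alpha^{-1/2}$ times the paper's $(\tilde x,\tilde y)$, which does not affect growth rates. The matrices $A$ and $B_1$, the relation $\lambda_\alpha=\alpha^{1/2}\lambda_\epsilon$, and the Gaussian integral for $\E\sqrt{Z_+}$ are also the same. Your check of \Cref{as:nondegen} (that $B_1$ and $\ad_A^2(B_1)=-2A\in S$ have no common invariant line) is equivalent to the paper's spanning-vector computation. You can delete the garbled phrase about $A$ not being in $S$, and note that $-2A$ lies in $S$ itself, not merely in its span.
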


\begin{proof}
    We start with a change of variables
    $$(\tilde x_t,\tilde y_t,\tilde z_t) \coloneqq (\alpha^{1/2}\hat x_{\alpha^{-1/2}t},\hat y_{\alpha^{-1/2}t},\alpha^{-1}(\hat z_{\alpha^{-1/2}t} - z_*))\;,$$
    and we set $\epsilon \coloneqq \alpha^{-1/2}$, so that
    \begin{equation}\label{eq:lorenz-changed}
        \begin{aligned}
             d\tilde x &= \tilde ydt \\
        d\tilde y &= [\tilde x \tilde z + (z_*-2)\epsilon^{2}\tilde x-2\epsilon\tilde y]dt \\
        d\tilde z &= -\epsilon \gamma\tilde zdt + \epsilon^{1/2} dW_t \,.
        \end{aligned}
    \end{equation}

    Thus, in the notation of \Cref{def:matrix-notation} ($n = 2$, $d = 1$) we have
    \begin{equs}
        A &\coloneqq \mat{0}{1}{0}{0} \quad&\quad
        B_1 &\coloneqq \mat{0}{0}{1}{0} \\ {}
        [A,B_1] &= \mat{-1}{0}{0}{1} \quad&\quad
        \operatorname{ad}_A^2(B_1) &= \mat{0}{-2}{0}{0}\,,
    \end{equs}
    which satisfies \Cref{as:nondegen}. Indeed, given any nonzero vector $v = (a,b) \in \R^2$ we have $B_1v = (0,a)$, $\operatorname{ad}_A^2(B_1)v = (-2b,0)$, $\operatorname{ad}_A^2(B_1)B_1v = (-2a,0)$, and $B_1\operatorname{ad}_A^2(B_1)v = (0,-2b)$, which together necessarily span $\R^2$. Note that
    $$\lambda^\epsilon \coloneqq \lim_{t \to \infty} \frac{1}{t}\ln \|(\tilde x, \tilde y)\|$$
    is related to $\lambda_\alpha$ via
    \begin{equation}\label{eq:alpha-epsilon-relation}
        \lambda_\alpha = \alpha^{1/2}\lambda^{\epsilon} \,.
    \end{equation}
    Since $\sigma(A + zB_1) = \{\pm \sqrt{z}\}$, we have $\lambda(A + zB_1) = \sqrt{z}1_{z > 0}$ (see \Cref{def:sigma-lambda}) and \Cref{thm:main} yields
    $$\lim_{\epsilon \to 0} \lambda^\epsilon = \frac{\gamma^{1/2}}{\pi^{1/2}}\int_0^\infty \sqrt{z}e^{-\gamma z^2}dz = \frac{1}{2\gamma^{1/4}\pi^{1/2}}\int_0^\infty u^{-1/4}e^{-u}du = \frac{\Gamma(\frac{3}{4})}{2\gamma^{1/4}\pi^{1/2}} \,.$$
    The theorem then follows from \eqref{eq:alpha-epsilon-relation}.
\end{proof}

\section{Proof of Main Theorem}
\label{sec:main}

After some preliminary statements we will present a proof of \Cref{thm:main}. For this section, recall \Cref{def:sigma-lambda}. The generalized eigenspaces $V_1,\dots,V_k$ of a linear operator $T$ on $\C^n$ are given by
$$V_i \coloneqq \bigcup_{j=1}^\infty \operatorname{ker}(T - \lambda_iI)^j\;,$$
where $\sigma(T) = \{\lambda_1,\dots,\lambda_k\}$. Note that $\bigoplus_{i=1}^k V_i = \C^n$. However, if $T$ is a real matrix then $T\bar v = \overline{Tv}$ for $v \in \C^n$, so the $V_i$'s come in conjugate pairs and thus the projections defined below will be well-defined maps from $\R^n$ to $\R^n$:

\begin{deff}\label{def:proj-eta}
    Let $z \in \R^d$ and $\eta \ge 0$. Write $\sigma(A(z)) = \{\lambda_1,\dots,\lambda_k\}$, let $V_1,\dots,V_k$ be the corresponding generalized eigenspaces, and suppose $w \in \R^n$ satisfies $w = \sum_{i=1}^k v_i$ for $v_i \in V_i$. Then
    $$P_\eta(z)w \coloneqq \sum_{\{i \mid \Re \lambda_i \geq \lambda(A(z)) - \eta\}} v_i \,.$$
\end{deff}

We denote by $P^\epsilon_{\invm}$ the set of all invariant measures of the Markov process 
$(z^\epsilon(t), v^\epsilon(t))$
given by \eqref{eq:zvr-system-def}, and when $\epsilon = 0$ we simply write $P_{\invm}$.

\begin{lem}\label{lem:description-of-Pinv}
    Let $\mu \in P_{\invm}$ be ergodic. Then there is some $z_0$ such that $\mu(\{z = z_0\}) = 1$. Furthermore, there is some $i$ such that $\mu(\{v \in \operatorname{Span}(\{V_j \mid \Re \lambda_j = \Re \lambda_i\})\}) = 1$, where $V_1,\dots,V_k$ are the generalized eigenspaces of $A(z_0)$ corresponding to the eigenvalues $\lambda_1,\dots,\lambda_k$. Additionally, we have
$$\int \Lambda(z,v)d\mu(z,v) = \Re \lambda_i \,.$$
In particular, if $\mu(\{P_0(z)v = 0\}) = 0$ then
$$\int \Lambda(z,v)d\mu(z,v) = \lambda(A(z_0)) \,.$$
\end{lem}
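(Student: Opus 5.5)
The idea is that at $\epsilon = 0$ the process $z$ is frozen and $v$ follows a deterministic projectivised linear flow. So the lemma reduces to understanding how $v_t = e^{tA(z_0)}v_0/\|e^{tA(z_0)}v_0\|$ behaves, combined with the ergodicity of $\mu$.

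\emph{Step 1: $z$ is a.s.\ constant.} When $\epsilon = 0$, \eqref{eq:zvr-system-def} gives $z(t) = z(0)$ for all $t$. Hence, for every Borel set $U \subset \R^d$, the set $\{z \in U\} \times S^{n-1}$ is invariant. By ergodicity, $\mu(\{z \in U\}) \in \{0,1\}$ for every such $U$. A Borel probability measure on $\R^d$ taking only the values $0,1$ is a Dirac mass (intersect over dyadic cubes), so $\mu(\{z = z_0\}) = 1$ for some $z_0$. From now on write $T \coloneqq A(z_0)$.

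\emph{Step 2: the set of spectral levels is flow-invariant.} For each real $r \in \{\Re \lambda_j\}$, let $W_r \coloneqq \operatorname{Span}\{V_j \mid \Re\lambda_j = r\}$. These spaces are real, since the $V_j$ come in conjugate pairs, they are $T$-invariant, and $\R^n = \bigoplus_r W_r$. For $v \neq 0$, let $\rho(v)$ be the largest $r$ such that the $W_r$-component of $v$ is nonzero. Since $e^{tT}$ preserves each $W_r$ and is invertible on each, $\rho(v_t) = \rho(v_0)$. Thus $\rho$ is a measurable invariant function, and ergodicity gives $\rho = r_i \coloneqq \Re\lambda_i$ $\mu$-a.s.\ for some $i$.

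\emph{Step 3: concentration on $W_{r_i}$.} This is the main step. Let $Q$ be the projection onto $\bigoplus_{r<r_i} W_r$ along the other $W_r$'s. Fix $\delta > 0$ smaller than half the gap between $r_i$ and the next lower level $r'$. Standard Jordan-form bounds give $\|e^{tT}w\| \le C_\delta e^{(r'+\delta)t}\|w\|$ on $\operatorname{Ran} Q$. On $W_{r_i}$, applying the same bound to $e^{-tT}$ gives $\|e^{tT}w\| \ge c_\delta e^{(r_i-\delta)t}\|w\|$. Now take $v$ with $\rho(v) = r_i$. Its component in $W_{r_i}$ is nonzero, and its components in the $W_r$ with $r>r_i$ vanish. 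Therefore $\|Qv_t\| \le C(v)e^{-(r_i - r' - 2\delta)t} \to 0$. Set $f(v) \coloneqq \min(1,\|Qv\|)$, which is bounded and continuous. Invariance of $\mu$ together with dominated convergence gives $\int f\,d\mu = \int f(v_t)\,d\mu \to 0$. Hence $Qv = 0$ $\mu$-a.s., that is, $\mu(\{v \in W_{r_i}\}) = 1$.

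\emph{Step 4: the value of the integral.} $\Lambda$ is bounded on the support $\{z_0\}\times S^{n-1}$. Integrate \eqref{eq:dlnr} (at $\epsilon=0$, with $\|x_0\|=1$) against $\mu$ and use stationarity to get
$$\int \Lambda\,d\mu = \frac1t \int \ln\|e^{tT}v\|\,d\mu(v).$$
For unit $v \in W_{r_i}$, the two-sided bounds from Step 3, restricted to $W_{r_i}$, give
$$(r_i-\delta)t - C_\delta' \le \ln\|e^{tT}v\| \le (r_i+\delta)t + C_\delta'$$
uniformly in $v$. Letting $t\to\infty$ and then $\delta\to0$ yields $\int\Lambda\,d\mu = r_i = \Re\lambda_i$.

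\emph{Step 5: the final claim.} If $r_i < \lambda(T)$, then every $v \in W_{r_i}$ satisfies $P_0(z_0)v = 0$, so $\mu(\{P_0(z)v=0\}) = 1$. Consequently, if $\mu(\{P_0(z)v = 0\}) = 0$, we must have $r_i = \lambda(A(z_0))$.

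The only genuinely delicate point is the uniform exponential separation in Step 3. It is handled by the Jordan-form estimates $\|e^{tT}|_{W}\| \lesssim_\delta e^{(r+\delta)t}$ and their inverse versions, applied separately on each spectral block.
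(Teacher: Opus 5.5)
Your proof is correct. Its skeleton is the same as the paper's: freeze $z$, use Jordan-form growth rates on each spectral level, and note that the projective flow is pulled onto the top level present in $v_0$.

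Where you differ is in how ergodicity is used.
\begin{itemize}
\item \emph{The paper's route.} It computes the exact pointwise exponent $\lim_{t\to\infty}\frac1t\ln\|e^{tA(z_0)}v\| = \Re\lambda_i$, keeping the precise $t^k$ correction from the nilpotent part. It then invokes \eqref{eq:lyap-exp-equals-ergodic-average}, i.e.\ the ergodic theorem applied to \eqref{eq:dlnr}. Since the exponent equals the constant $\int\Lambda\,d\mu$ for $\mu$-a.e.\ initial condition, the level $\Re\lambda_i$ is $\mu$-a.s.\ constant. The concentration of $\mu$ on the corresponding span is left implicit in the remark that $v^0(t)$ approaches that span.
\item \emph{Your route.} You use ergodicity only in its $0$--$1$ form, through the explicit flow-invariant function $\rho$. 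You derive the concentration from invariance plus dominated convergence applied to $\min(1,\|Qv\|)$. You get the value of the integral from the stationarity identity $\int\Lambda\,d\mu = \frac1t\int\ln\|e^{tT}v\|\,d\mu(v)$. This needs neither Birkhoff's theorem nor exact asymptotics; $\delta$-lossy Jordan bounds suffice.
\end{itemize}

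A side benefit of your Step 4 is that it shows any invariant measure concentrated on a single level $W_r$ has $\int\Lambda\,d\mu = r$, ergodic or not. The trade-off is that the paper's argument is shorter because it leans on an identity already stated in the setup. Yours is longer but fully self-contained, and it spells out the concentration step the paper only gestures at.
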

\begin{proof}
    Since $z^0(t)$ is constant, it is clear that ergodic $\mu$ must have $\mu(\{z = z_0\}) = 1$ for some $z_0$.

Then the proof follows by an elementary computation on Jordan blocks of $A(z_0)$. First, we show that for any (real, nonzero) $v \in \operatorname{Span}(V_i,\overline V_i)$ we have
    \begin{equation}\label{eq:eigenvalue-equals-growth-rate}
        \lim_{t \to \infty} \frac{1}{t}\ln\|e^{tA(z_0)} v\| = \Re \lambda_i \,.
    \end{equation}
    Indeed, by the Jordan decomposition theorem, the restriction of $A(z_0)$ to $V_i$ is given by 
    $\lambda_i + N$, for some nilpotent $N$.
    Write $v = w+\bar w$ with $w \in V_i$. If $\lambda_i$ is real then we have $w = v/2$, otherwise we choose a norm on $\C^n$ for which $V_i$ and $\bar V_i$ are orthogonal (since all norms are equivalent, a change of norm will not affect the left-hand side \eqref{eq:eigenvalue-equals-growth-rate}). Thus, for $c = 2$ or $c = 4$ we have
    $$\|e^{tA(z_0)}v\|^2 = c\|e^{\lambda_i t}e^{Nt}w\|^2 = ce^{2\Re \lambda_i t}\|e^{Nt}w\|^2\;.$$
    
    Writing $k = \sup\{j \ge 0\,:\, N^j w \neq 0\}$, we have $k < \infty$ by the nilpotency of $N$, so that
    \begin{equ}
    e^{Nt}w = \sum_{j \le k} \frac{t^j}{j!} N^j w\qquad\text{therefore}\qquad
    \|e^{Nt}w\| = \frac{t^k}{k!}\|N^k w\|\bigl(1+\CO(t^{-1})\bigr)\;,
    \end{equ}
and \eqref{eq:eigenvalue-equals-growth-rate} follows.

   A general $v$ can be written as $v = \sum_j (w_j + \bar w_j)$ with $w_j \in V_j$. 
Choosing $i$ such that $w_i \neq 0$ and, for all $j$ such that $w_j \neq 0$ one has $\Re \lambda_j \le \Re \lambda_i$,
it follows from \eqref{eq:eigenvalue-equals-growth-rate} that
   $$  \lim_{t \to \infty} \frac{1}{t}\ln\|e^{tA(z_0)} v\| = \max_{j \,:\, w_j \neq 0}\Re \lambda_{j} = \Re \lambda_i \,,$$
   and that $v^0(t) = e^{tA(z_0)} v/\|e^{tA(z_0)} v\|$ approaches $\operatorname{Span}(\{V_j \mid \Re \lambda_j = \Re \lambda_i\})$ (the other components grow at a slower rate). The claim then follows from \eqref{eq:lyap-exp-equals-ergodic-average}. 
\end{proof}

\begin{deff}\label{def:eta-star}
    For a matrix $T \in \R^{n \times n}$ we define the spectral gap
    $$\eta^*(T) \coloneqq \lambda(T) - \sup\{\Re \lambda \mid \lambda \in \sigma(T), \Re\lambda \neq \lambda(T)\} \,,$$
    and for $z \in \R^d$ we set $\eta^*(z) \coloneqq \eta^*(A(z))$.
\end{deff}

\begin{rem}
If all values in the spectrum of a matrix $T$ have the same real part, then we set $\eta^*(T) = +\infty$,
consistent with the convention $\sup\emptyset = -\infty$.
\end{rem}

In \Cref{sec:mass-moving} we will show the following two lemmas. The first says that with high probability the time it takes to have $\|P_\eta(z)v\| \geq \delta$ is logarithmic in $\epsilon^{-1}$. The second says that with high probability it takes polynomial in $\epsilon^{-1}$ time to have $\|P_\eta(z)v\|$ significantly less than $\delta$. As $\eta$ drops below $\eta^*(z)$, we can then conclude that on average much more time is spent away from the set $P_0(z)v = 0$ than near it, and by \Cref{lem:description-of-Pinv} this implies that our growth rate is close to $\lambda(A(z))$.

\begin{lem}\label{lem:mass-from-high-to-low}
    For every $(\tilde z_0,\tilde v_0) \in \R^d \times S^{n-1}$ and $\eta \in (0, \eta^*(\tilde z_0))$ there is some $\delta, C > 0$ such that for all $(z_0,v_0)$ with $$\|\tilde z_0 - z_0\| + \|\tilde v_0 - v_0\| \leq \delta$$
    and $\epsilon < \delta$ we have
    $$\Prb\Big(\sup_{t \leq C\ln\epsilon^{-1}} \|P_\eta(z_0)v^\epsilon(t)\| \geq \delta \Big) \geq \delta$$
    (where the process $(z^\epsilon(t), v^\epsilon(t))$ is defined as in \eqref{eq:zv-def} with initial condition $(z_0,v_0)$).
\end{lem}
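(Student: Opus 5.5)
We follow the strategy outlined in the introduction. We work with the unnormalised process $x^\epsilon(t)$, started at $x^\epsilon(0)=v_0$, and we use
\begin{equ}
P_\eta(z_0)v^\epsilon(t) = \frac{P_\eta(z_0)x^\epsilon(t)}{\|x^\epsilon(t)\|}\;.
\end{equ}
Write $P \coloneqq P_\eta(z_0)$ and $Q \coloneqq \Id - P$. Since $\eta<\eta^*(\tilde z_0)$, the continuity of spectra (\Cref{lem:cty-of-eigenspaces}) gives, for $z_0$ close to $\tilde z_0$, a uniform split of $\sigma(A(z_0))$ into two groups. The first lies in $\Re\lambda\ge \lambda(A(z_0))-\eta'$, the second in $\Re\lambda\le \lambda(A(z_0))-\eta''$, with $\eta'<\eta''$ uniform. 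The projections $P,Q$ depend continuously on $z_0$. Consequently $\|e^{tA(z_0)}Q\|\lesssim e^{(\lambda-\eta'')t}$ and $\|e^{tA(z_0)}P w\|\gtrsim e^{(\lambda-\eta')t}\|Pw\|$ up to polynomial factors, where $\lambda = \lambda(A(z_0))$.

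If $\|Pv_0\|\ge 2\delta$, the claim is trivial by taking $t=0$. We therefore concentrate on $v_0$ nearly in $\operatorname{ran}Q$. Take $t\asymp C\ln\epsilon^{-1}$ and expand $x^\epsilon(t)$ in Wiener chaos as in \eqref{eq:intro-first-chaos}, iterating Duhamel $K$ times. The $k$-th term is an iterated integral of
\begin{equ}
e^{(t-s_1)A}B(\,\cdot\,)e^{(s_1-s_2)A}\cdots B(\,\cdot\,)e^{s_kA}v_0
\end{equ}
against the increments $z^\epsilon(s_j)-z_0$. These increments are Gaussian of size $(\epsilon s)^{1/2}$, and the $\epsilon A^\epsilon$ correction can also be absorbed. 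Let $X_k(t)$ be the component in the $k$-th chaos. By \Cref{as:nondegen}, since $\operatorname{ran}Q$ (or rather the smallest subspace containing $v_0$ in the stable part) is not invariant under $\{\ad_A^j B_i\}$, there exist $k\le K$ (with $K$ uniform by compactness and continuity) and a word in the $\ad_A^jB_i$ such that $P$ applied to it, acting on $v_0$, is nonzero with a uniform lower bound. Expanding $e^{sA}Be^{-sA}=\sum_j \frac{s^j}{j!}\ad_A^j B$ lets us identify these brackets inside $X_k$.

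The key estimate is a second-moment lower bound for $\E\|PX_k(t)\|^2$. The $P$-component gains the growth $e^{(\lambda-\eta')(t-s)}$ after a transfer at time $s=O(1)$. Using the Itô isometry, this gives roughly
\begin{equ}
\E\|PX_k(t)\|^2 \gtrsim \epsilon^{k}e^{2(\lambda-\eta')t}\;,
\end{equ}
whereas the $Q$-part of the zeroth chaos is only of order $e^{(\lambda-\eta'')t}$. Hypercontractivity (equivalence of moments in a fixed chaos) together with Paley--Zygmund then gives $\|PX_k(t)\|\ge c\,\epsilon^{k/2}e^{(\lambda-\eta')t}$ with probability at least $c$. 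Two further bounds are needed on moderate-probability events. First, the remainder $R_K$ and the other chaoses must be controlled, using $\sup_{s\le t}|z^\epsilon(s)-z_0|\lesssim(\epsilon t)^{1/2}\ln$ and Gronwall. Second, $\|x^\epsilon(t)\|\lesssim e^{(\lambda+\eta_0)t}$ must hold, again by Gronwall with $z$ nearly frozen. Choosing $C$ large relative to $K/(\eta''-\eta')$ makes $\|Px^\epsilon(t)\|/\|x^\epsilon(t)\|\ge\delta$.

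The main obstacle is the nondegeneracy step: the lower bound on $\E\|PX_k\|^2$ must not be destroyed by cancellations between different words of the same order. We would handle this by taking $k$ minimal, so that all lower-order chaoses have vanishing $P$-component to leading order, and by using the orthogonality of distinct chaoses. A further difficulty is keeping the bound uniform in $(z_0,v_0)$ near $(\tilde z_0,\tilde v_0)$. The bookkeeping that distinguishes growth rates of polynomial factors within each group is then routine.
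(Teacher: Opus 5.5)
There is a genuine gap in your final step. You aim for the stronger fixed-time statement $\|Px^\epsilon(t)\|\ge\delta\|x^\epsilon(t)\|$ at $t\asymp C\ln\epsilon^{-1}$, but the two bounds you combine cannot give it. From $\|PX_k(t)\|\gtrsim\epsilon^{k/2}e^{(\lambda-\eta')t}$ and the Gronwall bound $\|x^\epsilon(t)\|\lesssim e^{(\lambda+\eta_0)t}$ you only get a ratio $\gtrsim\epsilon^{k/2+C(\eta'+\eta_0)}$. This tends to $0$, and it gets worse as $C$ grows.

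What your route actually needs is $\|Qx^\epsilon(t)\|\lesssim\epsilon^{k/2}e^{(\lambda-\eta')t}$. That means controlling the stable part of every chaos $1\le j\le K$, and of $R_K$, at that precision; you only compare with $QX_0$. Crude bounds give $\|QX_j(t)\|$ of order $\epsilon^{j/2}e^{(\lambda+c)t}$ up to powers of $t$, which is too large for $j<k$.

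Minimality of $k$ only helps at the base point $\tilde v_0$. For $v_0$ at distance $\rho$, shorter words pick up $P$-components of size $O(\rho)$. The next factor $B(\tilde z)$ then feeds this unstable mass back into $\operatorname{ran}Q$, producing contributions of order $\rho\,\epsilon^{(j+1)/2}e^{\lambda t}$ (up to powers of $t$) in chaos $j+1$. When $j+1<k$ and $\rho\gg\epsilon^{(k-j-1)/2}$, these exceed your lower bound on $\|PX_k\|$. So uniformity in the neighbourhood, which you flag but leave open, is exactly where this route breaks.

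The paper never compares $P$- and $Q$-components pathwise. It lower-bounds the full $\|X_K^\epsilon(T_\epsilon)\|$ (\Cref{lem:mass-generation}) and upper-bounds $R_K$ (\Cref{lem:remainer-control}). It takes $C=(K+1)/(2\eta)$, so that the loss $\epsilon^{K/2}$ costs less than $\eta$ in exponential rate. This gives $\|x^\epsilon(T_\epsilon)\|\ge e^{(\lambda-\eta+\kappa)T_\epsilon}\|x_0\|$ with probability at least $\delta/2$.

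The ingredient you are missing is \Cref{lem:high-mass-decays}. Every $\epsilon=0$ invariant measure carried by $\{P_\eta v=0\}$ has growth rate at most $\lambda-\eta$ (\Cref{lem:description-of-Pinv}). Using this, together with Feller continuity in $(z_0,v_0,\epsilon)$ and a martingale estimate on $\ln\|x^\epsilon\|$, the lemma shows the following. If $\|P_\eta(z_0)v^\epsilon(t)\|\le\delta$ for all $t\le T_\epsilon$, then $\|x^\epsilon(T_\epsilon)\|<e^{(\lambda-\eta+\kappa)T_\epsilon}\|x_0\|$ outside an event of small probability. Growth of $\|x\|$ therefore forces mass onto the unstable modes at some time before $T_\epsilon$. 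This is why the lemma only asserts a bound on $\sup_{t\le C\ln\epsilon^{-1}}\|P_\eta(z_0)v^\epsilon(t)\|$, and it avoids any precise control of $Qx^\epsilon$.
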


The proof of this lemma is postponed to Section~\ref{sec:firstLemma}.

\begin{lem}\label{lem:mass-stays-on-high}
For all $\tilde z_0 \in \R^d$, $\delta \in (0,\frac16]$, and $\eta \in (0, \eta^*(\tilde z_0))$ there are some $\rho \in (0,1)$ and $T > 0$ such that for all $(z_0,v_0)$ with
    $$\|\tilde z_0 - z_0\| \leq \rho \quad \text{and} \quad \|P_\eta(z_0)v_0\| \geq \delta$$
    and $\epsilon < \rho$ we have
    $$\Prb\Big(\inf_{T \leq t \leq \epsilon^{-1/2}} \|P_{\eta}(z_0)v^\epsilon(t)\| \leq \delta\Big) \leq \delta$$
    (where the process $(z^\epsilon(t), v^\epsilon(t))$ is defined as in \eqref{eq:zv-def} with initial condition $(z_0,v_0)$).
\end{lem}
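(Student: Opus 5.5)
The idea is to compare $x^\epsilon(t)$ with the frozen linear flow $e^{tA(z_0)}x_0$ on the window $[0,\epsilon^{-1/2}]$, and to exploit the spectral gap between $P_\eta(z_0)$ and its complement $Q\coloneqq \Id - P_\eta(z_0)$. Since $\eta<\eta^*(\tilde z_0)$, and by continuity of eigenvalues and spectral projections (\Cref{lem:cty-of-eigenspaces}), we can choose $\rho$ small so that for $\|z_0-\tilde z_0\|\le\rho$ the following hold. First, $P_\eta(z_0)$ projects onto the generalized eigenspaces with $\Re\lambda = \lambda(A(z_0))$ up to a small error. Second, the projections are uniformly bounded. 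Third, there is a uniform gap $\gamma>0$ between $\lambda(A(z_0))$ and the real parts of the eigenvalues on $\operatorname{ran} Q$. By the invariance of these subspaces under $A(z_0)$, we then have
\[
\|P e^{tA(z_0)}w\|\ge c\,(1+t)^{-m}e^{\lambda t}\|Pw\|, \qquad \|Q e^{tA(z_0)}w\|\le C(1+t)^{m}e^{(\lambda-\gamma)t}\|Qw\|,
\]
where $\lambda=\lambda(A(z_0))$, $P=P_\eta(z_0)$, and $m\le n$ accounts for the Jordan blocks.

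Next we control the perturbation. Write $A^\epsilon(z^\epsilon(s)) = A(z_0) + E(s)$ with
\[
E(s) = (A^\epsilon - A) + \sum_i (z^\epsilon_i(s)-z_{0,i})B_i .
\]
Because $z^\epsilon(s)=z_{\epsilon s}$ is an OU process on the slow time scale, standard Gaussian/BDG bounds give
\[
\sup_{s\le \epsilon^{-1/2}}\|z^\epsilon(s)-z_0\|\le \epsilon^{1/8}
\]
with probability at least $1-\delta/2$ once $\epsilon<\rho$, uniformly for $z_0$ in a compact set (the variance is of order $\epsilon\cdot\epsilon^{-1/2}=\epsilon^{1/2}$). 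On this event, $\sup_{s\le\epsilon^{-1/2}}\|E(s)\|\le \kappa$, where $\kappa=\kappa(\epsilon)\to0$; continuity of $\epsilon\mapsto A^\epsilon$ handles the first term of $E$. Note that $\|E\|$ small does not control growth over times of order $\epsilon^{-1/2}$ directly, because $\kappa t$ need not be small there. So we avoid Grönwall over the whole window and instead argue on the cone
\[
\mathcal C_\delta=\{x : \|Px\|\ge \delta'\|x\|\},
\]
with $\delta'$ a constant multiple of $\delta$ chosen below.

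The key step, and the main obstacle, is a deterministic cone-invariance statement for the perturbed linear ODE. Fix a horizon $T$ large enough that $C(1+T)^{2m}e^{-\gamma T}\le \delta^2/100$. We work in time blocks $[kT,(k+1)T]$. On a single block, Duhamel's formula gives
\[
x((k+1)T) = e^{TA(z_0)}x(kT) + O(\kappa T e^{CT})\|x(kT)\|.
\]
Write $x(kT)=a_k+b_k$ with $a_k=Px(kT)$ and $b_k=Qx(kT)$. Over one block, $a$ is multiplied by at least $c(1+T)^{-m}e^{\lambda T}$ and $b$ by at most $C(1+T)^me^{(\lambda-\gamma)T}$, up to an error of size $\kappa e^{C'T}\|x\|$. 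Hence the ratio $\|b_{k}\|/\|a_{k}\|$ is contracted toward a value of order $\delta^2$, and once $\kappa$ is small (that is, $\epsilon<\rho$) this ratio never exceeds roughly $\delta^{-1}$. Starting from $\|Pv_0\|\ge\delta$, the ratio $\|b_k\|/\|a_k\|$ is initially at most $\delta^{-1}$ times a constant. It decreases geometrically to order $\delta^2$ after a bounded number of blocks, and it stays there for all blocks up to time $\epsilon^{-1/2}$. Within a block the ratio can change only by a factor $e^{CT}$, so for $t\ge T_0$ (a fixed number of blocks) we get $\|P v^\epsilon(t)\|\ge 1-O(\delta^2)>\delta$ for $\delta\le 1/6$.

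One subtlety requires care. We need a lower bound on $\|Pv\|$ at intermediate times $t\in[0,T_0]$ as well, but the statement only requires it for $t\ge T$, so we set the statement's $T$ equal to $T_0$. Moreover, $\|Pv\|\le1$ is not guaranteed since $P$ is not orthogonal, so we work with norms adapted to the decomposition and absorb the resulting constants. Combining the deterministic cone argument with the probability bound $\le\delta$ on the bad event for $z$ gives the claim.
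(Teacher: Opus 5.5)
Your proposal is correct and is essentially the paper's argument: discard the event (of probability at most $\delta$, via \Cref{lem:z-remains-small}) on which $z^\epsilon$ leaves a small ball around $z_0$ before time $\epsilon^{-1/2}$, then run a deterministic blockwise comparison with the frozen flow. On each block of fixed length $T$, Duhamel/Gronwall plus the spectral gap keep the ratio $\|(I-P_\eta(z_0))x\|/\|P_\eta(z_0)x\|$ under control, and the estimate is iterated up to $\epsilon^{-1/2}$.

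The one step to tighten is the control at intermediate times. A crude within-block factor $e^{CT}$ does not preserve an endpoint ratio of order $\delta^2$, because $T\gtrsim\gamma^{-1}\ln\delta^{-1}$. Any of the following fixes it: apply your spectral bounds for all $s\in[0,T]$, which gives a $T$-independent factor; iterate more blocks until the endpoint ratio drops below $e^{-CT}$, which is possible since the error floor $O(\kappa e^{C'T})$ shrinks with $\rho$; or, as the paper does, compare from time $kT$ over the whole window $[kT+T,kT+2T]$.
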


The proof of this lemma is postponed to Section~\ref{sec:secondLemma}.
Given the two lemmas above, we carry out the following construction:

\begin{lem}\label{lem:construction-of-mn}
    There exist open sets $\mcM_N \subset \R^d \times S^{n-1}$ for $N \in \N$ satisfying
\begin{equation}\label{eq:mn-contains-p0}
    \mcM_\infty \coloneqq \{(z,v) \in \R^d \times S^{n-1} \mid P_0(z)v = 0\} \subset \bigcup_{K = 1}^\infty \bigcap_{N=K}^\infty \mcM_N
\end{equation}
and
\begin{equation}\label{eq:mn-goes-to-0}
    \lim_{N \to \infty} \limsup_{\epsilon \downarrow 0} \sup_{\mu^\epsilon \in P^\epsilon_{\invm}} \mu^\epsilon(\mcM_N) = 0\,.
\end{equation}
\end{lem}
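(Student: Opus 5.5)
Fix a point $(\tilde z_0,\tilde v_0)\in\mcM_\infty$. The plan is to build, around each such point, a small ball on which invariant measures must have little mass, and then assemble the sets $\mcM_N$ from countably many such balls. Since $\mcM_\infty$ is $\sigma$-compact (it sits inside $\R^d\times S^{n-1}$) while $z$ has Gaussian tails uniformly in $\epsilon$, we may restrict attention to $z$ in a large compact set. The exceptional set where $\eta^*(\tilde z_0)$ degenerates causes no trouble, since $\eta^*>0$ always by definition.

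\textbf{Step 1: local construction.} For $(\tilde z_0,\tilde v_0)\in\mcM_\infty$, fix $\eta\in(0,\eta^*(\tilde z_0))$. By continuity of spectral projections (\Cref{lem:cty-of-eigenspaces}), $P_\eta(z_0)=P_0(z_0)$ for $z_0$ near $\tilde z_0$, and $P_\eta(z)$ varies continuously there. Let $\delta_1,C$ be given by \Cref{lem:mass-from-high-to-low}. Apply \Cref{lem:mass-stays-on-high} with $\delta=\min(\delta_1,1/6)$ to obtain $\rho,T$. Now take a neighbourhood $U$ of $(\tilde z_0,\tilde v_0)$ on which $\|P_\eta(z)v\|<\delta/2$.

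Consider the process started at any point of $U$. By the strong Markov property, it reaches $\{\|P_\eta v\|\ge\delta\}$ within time $C\ln\epsilon^{-1}$ with probability at least $\delta$. After that it stays outside $U$ over a window of length $\epsilon^{-1/2}-T$ with probability at least $1-\delta$. For this we need $z$ not to move by more than $\rho$ during that window; this is fine because $z^\epsilon$ moves on time scale $\epsilon^{-1}$, so $\sup_{t\le\epsilon^{-1/2}}\|z^\epsilon(t)-z_0\|\to0$ in probability. We also need $P_\eta(z_0)$ versus $P_\eta(z(t))$ to be close, which again follows from continuity.

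Iterating over cycles of length $\epsilon^{-1/2}$ then shows that, under stationarity, the fraction of time spent in $U$ is at most of order
\[
\frac{C\ln\epsilon^{-1}/\delta+T}{\epsilon^{-1/2}}+\delta .
\]
For this we use the ergodic-average identity $\mu^\epsilon(U)=\E_{\mu^\epsilon}\frac1t\int_0^t 1_U\,ds$ together with a renewal or geometric-trials argument of the kind sketched at the end of the preceding remark. This gives $\limsup_{\epsilon\to0}\sup_{\mu^\epsilon}\mu^\epsilon(U)\lesssim\delta$. The residual $\delta$ is not small a priori, but we can rerun the argument with $\delta$ replaced by any smaller $\delta'\le\delta_1$. \Cref{lem:mass-from-high-to-low} remains valid with the smaller threshold: its probability lower bound $\delta_1$ only improves relative to $\delta'$, and the hitting conclusion is monotone in the threshold. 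Shrinking $U$ accordingly gives $\limsup\sup\mu^\epsilon(U)\le \kappa$ for any prescribed $\kappa$.

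\textbf{Step 2: globalising.} Cover the compact piece $\mcM_\infty\cap\{|z|\le R\}$ by finitely many neighbourhoods $U$ of the type above, with local bound $\kappa$ each. Letting $R=R_N\uparrow\infty$ and $\kappa=\kappa_N$ small relative to the number of sets used, define $\mcM_N$ as the union of this cover together with $\{|z|>R_N\}$. The latter has $\mu^\epsilon$-mass equal to a Gaussian tail, uniformly in $\epsilon$. This yields \eqref{eq:mn-goes-to-0}.

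For \eqref{eq:mn-contains-p0} we need each point of $\mcM_\infty$ to lie in $\mcM_N$ for all large $N$. We ensure this by nesting: each point with $|z|<R_K$ is covered at stage $K$, and we arrange the covers at later stages to contain the earlier ones' base points. This is possible since we only require a neighbourhood of each base point, shrinking with $N$, so that every base point stays covered.

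\textbf{Main obstacle.} The main difficulty is the conversion of the two hitting/staying estimates into a bound uniform over all invariant measures. This requires a careful cycle decomposition using the strong Markov property, and control of the drift of $z$ so that $P_\eta(z_0)$ can be compared with the projection evaluated at the current $z$.
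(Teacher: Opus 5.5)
There is a genuine gap in Step~2, in two places. First, $\mcM_\infty\cap\{|z|\le R\}$ is not compact, so the finite subcover you rely on need not exist. The map $z\mapsto P_0(z)$ is discontinuous wherever the spectral gap closes, and pointwise positivity of $\eta^*$ (which you use to dismiss the issue) gives no lower bound on compacts. The paper's own Lorenz~63 example, $A(z)=\mat{0}{1}{z}{0}$, satisfies \Cref{as:nondegen} and already shows this. For every $z>0$ the point $(z,(1,-\sqrt z)/\sqrt{1+z})$ lies in $\mcM_\infty$, but as $z\downarrow0$ these points converge to $(0,(1,0))$. Since $A(0)$ is nilpotent, $P_0(0)$ is the identity, so the limit is not in $\mcM_\infty$. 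Because $\eta^*(z)=2\sqrt z\to0$, your neighbourhoods $U$ (which need $\eta<\eta^*(\tilde z_0)$ and continuity of $P_\eta$) lie in $\{z>\eta^2/4\}$ and have $z$-radius shrinking to zero. No finite family of them covers the set. Relatedly, ``$P_\eta(z_0)=P_0(z_0)$ near $\tilde z_0$'' is false whenever the top eigenvalue cluster of $A(\tilde z_0)$ splits, though you only need continuity of $P_\eta$ there. A correct version covers, at stage $N$, only the compact set $\mcM_\infty\cap\{|z|\le N,\ \eta^*(z)\ge N^{-1}\}$. The paper gets the same effect with $\eta(z)=\frac12\eta^*(z)\wedge N^{-1}$: then $P_{\eta(z_i)}(z)=P_0(z)$ as soon as $N^{-1}<\eta^*(z)$, which is exactly what yields \eqref{eq:mn-contains-p0}.

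Second, choosing ``$\kappa_N$ small relative to the number of sets used'' is circular. Your Step~1 only gives $\limsup_{\epsilon}\sup_{\mu^\epsilon}\mu^\epsilon(U)\lesssim\delta$. Removing the residual by shrinking $\delta$ shrinks $U$, through the $\rho$ of \Cref{lem:mass-stays-on-high}, over which you have no quantitative control. So the number of neighbourhoods grows as $\kappa_N\downarrow0$, and nothing forces $(\#\text{sets})\cdot\kappa_N\to0$. Moreover, fixed-length cycles do not combine with the \emph{local} \Cref{lem:mass-from-high-to-low}: after a failed attempt the process may have left the ball in which that lemma applies.

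One repair is a genuine renewal argument. Restart attempts only upon re-entry into $\bar U$, and end each cycle at the next re-entry. Then the time spent in $U$ per cycle is at most $C\ln\epsilon^{-1}$, while a cycle lasts longer than $\epsilon^{-1/2}$ with probability bounded below uniformly in $\epsilon$. This gives $\limsup_\epsilon\sup_{\mu^\epsilon}\mu^\epsilon(U)=0$ for each fixed $U$, so finite unions are harmless.

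The paper instead avoids the union bound altogether:
\begin{itemize}
\item $\mcM_N$ is an \emph{intersection} of finitely many conditions indexed by a finite cover of the ball $\K_N$ (not of $\mcM_\infty$). So $\mcM_N^c$ is a union, and a lower bound on $\mu^\epsilon(\mcM_N^c)$ only requires the process to sit in one piece of it.
\item \Cref{lem:mass-from-high-to-low} is made uniform over $v_0\in S^{n-1}$ by compactness, so geometric trials work from any starting point within the single window $[0,\epsilon^{-1/2}]$.
\item Smallness comes from $\delta(z)<N^{-1}$, independently of the number of pieces.
\end{itemize}
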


\begin{proof}
Throughout the proof, $(z_0,v_0)$ denotes the initial condition for the process defined in \eqref{eq:zvr-system-def} and \Cref{lem:well-posed} ($r_0$ is not relevant). Fix $N \in \N$ and let $\K_N \coloneqq \{z \in \R^d \mid \|z\| \leq N\}$.

Let $z \in \K_N$ and set $\eta(z) = \frac{1}{2}\eta^*(z) \wedge N^{-1}$. By \Cref{lem:mass-from-high-to-low} and compactness of $S^{n-1}$ there are some $0 < \rho(z) < \delta(z) < N^{-1}$, $C(z) > 0$ such that for all $\epsilon < \rho(z)$
    \begin{equation}\label{eq:first-attempt}
        \|z_0 - z\| < \rho(z) \implies \Prb\Big(\sup_{t \leq C(z)\ln\epsilon^{-1}} \|P_{\eta(z)}(z_0)v^\epsilon(t)\| \geq 2\delta(z) \Big) \geq 2\delta(z) \,,
    \end{equation}
    uniformly over $v_0 \in S^{n-1}$.
    After decreasing $\rho(z)$, by \Cref{lem:mass-stays-on-high} there are $T(z) > 0$ such that
    \begin{equation}\label{eq:after-successfull-attempt}
    \begin{aligned}
         &\|z_0 - z\| < \rho(z) \text{ and } \|P_{\eta(z)}(z_0)v_0\| \geq \delta(z)\\
          &\implies \Prb\Big(\inf_{T(z) \leq t \leq \epsilon^{-1/2}} \|P_{\eta(z)}(z_0)v^\epsilon(t)\| \leq \delta(z)\Big) \leq \delta(z)\,.
    \end{aligned}
    \end{equation}
    After possibly decreasing $\rho(z)$ again, we have by \Cref{lem:cty-of-eigenspaces} that
    \begin{equation}\label{eq:proj-are-close}
        \|z_0 - z\| < \rho(z) \implies \|P_{\eta(z)}(z_0) - P_{\eta(z)}(z)\| \leq \frac{\delta(z)}{4}
    \end{equation}
    and $z_0 \mapsto P_{\eta(z)}(z_0)$ is continuous on the set where $\|z_0 - z\| < \rho(z)$.
    By compactness of $\K_N$, there exist $k \in \N$ (depending on $N$) and $z_1,\dots,z_k \in \K_N$ such that for all $z_0 \in \K_N$ there exists $i \in [k]$ with $\|z_0 - z_i\| < \frac{1}{4}\rho(z_i)$. We define
    \begin{equ}\label{eq:def-of-mn}
        \mcM_N \coloneqq \Big\{(z_0,v_0) \Bigm| \|P_{\eta(z_i)}(z_0)v_0\| < \frac{\delta(z_i)}{2} \text{ whenever } \|z_0 - z_i\| \le \frac{\rho(z_i)}2 \Big\} \,,
    \end{equ}
    which, being a finite intersection of open sets by the above, is indeed open.
    Then \eqref{eq:mn-contains-p0} holds because if $P_0(z)v = 0$ and we take
    $N > \|z\| \vee \eta^*(z)^{-1}$ then whenever $\|z - z_i\| \leq \rho(z_i)/2$ one has $P_0(z) = P_{\eta(z_i)}(z)$ (because $\eta(z_i) \leq N^{-1} < \eta^*(z)$), so that $\|P_{\eta(z_i)}(z)v\| = 0$, which shows that $(z,v) \in \mcM_N$ for all large enough $N$.

    It remains to show \eqref{eq:mn-goes-to-0}. Fix $N \in \N$ and $z_0 \in \K_N$, and choose $i$ as in \eqref{eq:def-of-mn}. In what follows, to ease notation we set $\rho = \rho(z_i)$, $\delta = \delta(z_i)$, etc. We also assume that $\epsilon^{1/8} < \frac{1}{4}\min_{j \in [k]} \rho(z_j)$.
    
    Note that by \Cref{lem:z-remains-small}
    \begin{equation}\label{eq:upper-bound-on-z}
        \lim_{\epsilon \downarrow 0}\sup_{z_0 \in \K_{N+1}}\Prb\Big(\sup_{t \leq \epsilon^{-2/3}} \|z^\epsilon(t) - z_0\| > 
\epsilon^{1/8} \Big) = 0 \,,
    \end{equation}
    so by possibly reducing $\epsilon$ we may assume that
    $$\sup_{z_0 \in \K_{N+1}}\Prb\Big(\sup_{t \leq \epsilon^{-2/3}} \|z^\epsilon(t) - z_0\| > 
\epsilon^{1/8} \Big) \leq \frac{\min_{j \in [k]} \delta(z_j)}{3} \leq \frac{\delta}{3} \,.$$
In particular, with $i$ as in \eqref{eq:def-of-mn},
\begin{equation}\label{eq:def-of-e1}
    \Prb(E_1) \coloneqq \Prb\Big(\sup_{t \leq \epsilon^{-2/3} }\|z^\epsilon(t) - z_i\| < \frac{\rho}{2} \Big) \geq 1 - \frac{\delta}{3}\,.
\end{equation}
Let
$$\tau \coloneqq \inf\{t \geq 0 \mid \|P_\eta(z_0)v^\epsilon(t)\| \geq 3\delta/2\}$$
and $\tau_1 = \tau \wedge \inf \{t \geq 0\mid \|z^\epsilon(t)-z_i\|\ge \rho/2\}$.
By \eqref{eq:first-attempt} and the Markov property (for the third inequality below) and \eqref{eq:def-of-e1}, we have for all $m \leq \epsilon^{-2/3}(C\ln \epsilon^{-1})^{-1}$ that
\begin{align*}
    \Prb\Big(\tau > mC\ln \epsilon^{-1} &\text{ or } E_1^C\Big) \leq \Prb\Big(\tau_1 > mC\ln \epsilon^{-1}\Big) + \Prb(E_1^c) \\
    &\leq \E\Big[1_{\tau_1 > (m-1)C\ln\epsilon^{-1}}\Prb\Big(\tau > mC\ln \epsilon^{-1} \,\Big|\, \F_{(m-1)C\ln \epsilon^{-1}}\Big)\Big] + \Prb(E_1^c)\\
    &\leq (1 - 2\delta)\Prb\Big(\tau_1 > (m-1)C\ln \epsilon^{-1}\Big) + \Prb(E_1^c) \\
    &\leq \dots \leq (1 - 2\delta)^m + \frac{\delta}{3} \,.
\end{align*}
Choose $M$ such that $(1-2\delta)^M < 2\delta/3$, so that the above becomes
$$\Prb(E_2) \coloneqq \Prb\Big(\tau \leq MC\ln \epsilon^{-1} \text{ and } E_1 \Big) \geq 1 - \delta \,.$$
On the event $E_2$, by \eqref{eq:proj-are-close} we have $\|P_\eta(z^\epsilon(\tau))v^\epsilon(\tau)\| \geq 3\delta/2 - \delta/2 = \delta$, so we may apply \eqref{eq:after-successfull-attempt} and the strong Markov property to conclude
$$\Prb\Big(\inf_{MC\ln \epsilon^{-1}+T \leq t \leq \epsilon^{-1/2}} \|P_{\eta}(z^\epsilon(\tau))v^\epsilon(t)\| \geq \delta \text{ and } E_2\Big) \geq 1 - 2\delta \,.$$
By $E_2 \subset E_1$ and \eqref{eq:proj-are-close},
\begin{equ}[e:lowerBound]
\Prb\Big(\inf_{MC\ln \epsilon^{-1}+T \leq t \leq \epsilon^{-1/2}} \|P_{\eta}(z^\epsilon(t))v^\epsilon(t)\| \geq \frac{\delta}{2} \text{ and } E_1\Big) \geq 1 - 2\delta \,.
\end{equ}

To get uniform bounds over $z_0$, set $\bar\delta \coloneqq \max_{i=1,\dots,k} \delta(z_i)$, 
$\bar T \coloneqq \max_{i=1,\dots,k} T(z_i)$, and similarly for $\bar M$ and $\bar C$.
Combining \eqref{e:lowerBound} with the definition \eqref{eq:def-of-mn}, we have
$$\E\Big[\frac{1}{\epsilon^{-1/2}}\int_0^{\epsilon^{-1/2}} 1_{\mcM_N^c}(z^\epsilon(t),v^\epsilon(t))dt\Big] \geq (1-2\bar\delta)\Big(1 - \frac{\bar M \bar C\ln \epsilon^{-1}+\bar T}{\epsilon^{-1/2}}\Big)1_{z_0 \in \K_N} \,.$$
Given any $\mu^\epsilon \in P^\epsilon_{\invm}$ and integrating both sides of this bound over $(z_0,v_0)$
against $\mu^\epsilon$ yields 
$$\mu^\epsilon(\mcM_N^c) \geq (1-2\bar\delta)\Big(1 - \frac{\bar M\bar C\ln \epsilon^{-1}+\bar T}{\epsilon^{-1/2}}\Big)\mu^\epsilon(\K_{N-1}) \,.$$
Since $\mu^\epsilon(\K_{N}) = \Prb(Z \in \K_{N})$ where $Z$ is the invariant measure for the OU 
process \eqref{eq:z-def} as in \Cref{thm:main}, we conclude that
$$\lim_{\epsilon \downarrow 0} \sup_{\mu^\epsilon \in P_{\invm}^\epsilon} \mu^\epsilon(\mcM_N) \leq 1 - (1-2\bar\delta)\Prb(Z \in \K_{N-1}) \,.$$
Taking $N \to \infty$ forces $\bar\delta \to 0$ (we chose $\delta(z) < N^{-1}$) and $\Prb(Z \in \K_{N-1}) \to 1$, so we conclude \eqref{eq:mn-goes-to-0}.
\end{proof}

With all of these ingredients in place, in particular \Cref{lem:construction-of-mn}, 
the proof of \Cref{thm:main} is now relatively straightforward.

\begin{proof}
    First note that the process $(z^\epsilon(t), v^\epsilon(t), \epsilon(t))$ where $\epsilon(t) \equiv \epsilon$ is a Feller Markov process on $\R^d \times S^{n-1} \times [0,1]$. Indeed, the coefficients of the $(z,v)$ portion of the system \eqref{eq:zvr-system-def} along with $d\epsilon(t) = 0$ are smooth and have linear growth (recall that $v$ is restricted to lie on the unit sphere), so it is a standard fact that one has continuous dependence on initial conditions (see for example \cite[Theorem 8.7]{LeGall}). It follows that if $\mu^\epsilon \in P_{\invm}^\epsilon$ and $\mu^\epsilon \to \mu$ along a subsequence of $\epsilon \downarrow 0$, then $\mu \in P_{\invm}$ (limits of invariant measures are still invariant measures if the process is Feller). 
    
 Since every $\mu^\epsilon$ satisfies $\mu^\epsilon(\{z \in A\}) = \Prb(Z \in A)$
  for all Borel subsets $A$ of $\R^d$ (where $Z$ has the law of the invariant distribution of the OU process as defined in \Cref{thm:main}), the family $\{\mu^\epsilon \mid \epsilon \in (0,1], \mu^\epsilon \in P^\epsilon_{\invm}\}$ is tight and its limit points as $\epsilon \downarrow 0$ lie in $P_{\invm}$. If $\mu$ is such a limit point, then by using Portmanteau on \eqref{eq:mn-goes-to-0} (recall from \Cref{lem:construction-of-mn} that $\mcM_N$ are open) and then \eqref{eq:mn-contains-p0} we conclude that
    $$\mu(\{(z,v) \in \R^d \times S^{n-1} \mid P_0(z)v = 0\}) = 0 \,.$$
    By \Cref{lem:description-of-Pinv} and ergodic decomposition, this implies that
    \begin{equation}\label{eq:lambda-star-is-correct}
        \int \Lambda(z,v)d\mu(z,v) = \E[\lambda(A(Z))] = \lambda^* \,.
    \end{equation}
    Since $|\Lambda^\epsilon(z,v)| \lesssim 1+|z|$ and $\mu^\epsilon(|z|^2) = \E[|Z|^2] < \infty$ uniformly over
    all $\epsilon$ and $\mu^\epsilon \in P^\epsilon_{\invm}$, by \cite[Proposition 4.15]{BenaimHurth22} and \eqref{eq:lambda-star-is-correct} we deduce \eqref{eq:main-second}.

To show \eqref{eq:main-first}, we first claim that almost surely
$$\lambda_\epsilon \in S^\epsilon \coloneqq \Big\{\int \Lambda^\epsilon(z,v)d\mu(z,v) \Bigm| \mu \in P^\epsilon_{\invm}\Big\} \,.$$
To show this, note that \eqref{eq:dlnr} and \eqref{eq:lyap-exp-def} show that $\lambda_\epsilon = \lim_{t \to \infty} \mu_t(\Lambda^\epsilon)$, where $\mu_t$ denotes the normalised empirical occupation measure for the process
$(z^\epsilon,v^\epsilon)$. The fact that the $\mu_t$ are tight (as a family indexed by $t$, for any fixed $\epsilon$) and that their limit points lie in $P^\epsilon_{\invm}$ was shown for example
in \cite[Thm~2.2]{persistence} (see also \cite[Thm~4.20]{BenaimHurth22}).

Given $\epsilon_n \downarrow 0$, we can find random variables $\mu_n \in P^{\epsilon_n}_{\invm}$ such that, almost surely,
$$\lambda_{\epsilon_n} = \int \Lambda^{\epsilon_n}(z,v)d\mu_n(z,v) \,.$$
The tightness of the collection of all invariant measures implies that, for every subsequence of $\mu_n$, there is a further subsequence which converges to some $\mu \in P_{\invm}$, and that this $\mu$ satisfies \eqref{eq:lambda-star-is-correct}. Then by continuity of $\Lambda^{\epsilon}$ in $\epsilon$, $|\Lambda^\epsilon(z,v)| \lesssim 1+|z|$, and $\mu_n(\{z \in \cdot\}) = \Prb(Z \in \cdot)$, we conclude that
$$\lim_{n\to\infty}\lambda_{\epsilon_n} = \lim_{n \to \infty} \int \Lambda^{\epsilon_n}(z,v)d\mu_n(z,v) = \int \Lambda(z,v)d\mu(z,v) = \lambda^*$$
(the existence of the limits above are implied because for all subsequences there is a subsubsequence which satisfies the limit).
\end{proof}

\section{Mass Generation and Preservation on Low Modes}\label{sec:mass-moving}
In this section we present proofs of \Cref{lem:mass-from-high-to-low,lem:mass-stays-on-high}. The proofs use the chaos expansion of \eqref{eq:x-def} with respect to a suitable Gaussian process. In particular, we make the following definition:
\begin{deff}\label{def:chaos-expansion}
    Given $z_0 \in \R^d$ and $\epsilon \in [0,1]$, let $\tilde z^\epsilon(t) \coloneqq z^\epsilon(t) - z_0$, where $z^\epsilon(t)$ is as in \eqref{eq:zv-def} with initial condition $z^\epsilon(0)=z_0$. For a function $f: \Delta^k(t) \to \R$, where
    $$\Delta^k(t) \coloneqq \{(t_1,\dots,t_k) \mid t \geq t_1 \geq \dots \geq t_k \geq 0\} \,,$$
    and a multi-index $J \in [d]^k$, we define
    $$I_J(f) \coloneqq \int_{\Delta^k(t)} f(t_1,\dots,t_k)\prod_{j=1}^k \tilde z_{J_j}(t_j)\,dt_k\cdots dt_1 \,,$$
    and we also overload this notation to apply to $\R^n$-valued $f$.
    When $k = 0$ this should be interpreted as $I_0(f) = f$, where $f$ is a constant.
\end{deff}

As in \cite{Tommaso,Batchelor}, we will approximate the solution to \eqref{eq:x-def} by an element in a fixed Wiener chaos
and use lower bounds on that to show that it necessarily leaves any ``bad'' region relatively quickly. 
Setting $\tilde A = A^\epsilon(z_0)$ and iterating the mild formulation of \eqref{eq:x-def}, we obtain for all $K \in \N_0$ that
    \begin{equation}\label{eq:chaos-expansion}
        \begin{aligned}
              x^\epsilon(t) &= \sum_{k=0}^K \sum_{J \in [d]^k} I_J\Big(e^{t\tilde A}\Big[\prod_{j=1}^k e^{-t_j\tilde A}B_{J_j}e^{t_j\tilde A}\Big]x_0\Big) \\
              &\quad + \sum_{J \in [d]^{K+1}} I_J\Big(e^{t\tilde A}\Big[\prod_{j=1}^{K+1} e^{-t_j\tilde A}B_{J_j}e^{t_j\tilde A}\Big]e^{-t_{K+1}\tilde A}x^\epsilon({t_{K+1}})\Big) \\
              &\eqqcolon X_K^\epsilon(t) + R_K^\epsilon(t)
        \end{aligned}
    \end{equation}
    (where $\prod_{j=1}^k a_j \coloneqq a_1\dots a_k$).

\subsection{Control over \texorpdfstring{$X_K$}{XK} and \texorpdfstring{$R_K$}{RK}}
We analyze the terms $X^\epsilon_K(t)$ and $R^\epsilon_K(t)$ separately, first showing that $X^\epsilon_K(t)$ has some chance of being at least (about) $\epsilon^{K/2}e^{\lambda t}$, and then that $R^\epsilon_K(t)$ should be at most $\epsilon^{(K+1)/2}e^{\lambda t}$, where $\lambda$ is the growth rate associated to $e^{t \tilde A}$. For the lemma below, recall \Cref{lem:well-posed} and in particular note that $x_0 = \|x_0\|v_0$.

\begin{lem}\label{lem:mass-generation}
    For every $(\tilde z_0,\tilde v_0) \in \R^d \times S^{n-1}$ there is some $K \in \N_0$ such that for all $\eta > 0$ there are some $\delta, C > 0$ such that for all $(z_0,v_0)$ with $$\|\tilde z_0 - z_0\| + \|\tilde v_0 - v_0\| \leq \delta \,,$$
$\epsilon < \delta$, and $t \geq C$ we have
    $$\Prb(\|X_K^\epsilon(t)\| \geq  \epsilon^{K/2}e^{(\lambda - \eta/2)t}\|x_0\|) \geq \delta \,,$$
    where $\lambda \coloneqq \lambda(A^\epsilon(z_0))$ (recall the notation $X^\epsilon_K(t)$ from \eqref{eq:chaos-expansion}).
\end{lem}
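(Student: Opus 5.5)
The plan is to run a second-moment argument on the top chaos of $X^\epsilon_K(t)$ and then use hypercontractivity. By Definition~\ref{def:chaos-expansion}, $\tilde z^\epsilon$ is a centred Gaussian process with covariance $\E[\tilde z_i(s)\tilde z_j(r)] = \delta_{ij}(2d_i)^{-1}e^{-\epsilon d_i|s-r|}(1-e^{-2\epsilon d_i (s\wedge r)})$, which equals $\epsilon\,\delta_{ij}(s\wedge r)(1+O(\epsilon(s\vee r)))$. Hence $X^\epsilon_K(t)$ is a polynomial of degree $\le K$ in a Gaussian field and lies in the first $K$ inhomogeneous Wiener chaoses. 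Gaussian hypercontractivity gives $\E\|Y\|^4\le C_K(\E\|Y\|^2)^2$ for any such $Y$, and Paley--Zygmund then gives $\Prb(\|Y\|^2\ge\frac12\E\|Y\|^2)\ge (4C_K)^{-1}$. I will apply this to $Y = P X^\epsilon_K(t)$ with $P\coloneqq P_{\eta/4}(z_0)$. Since $\|X^\epsilon_K(t)\|\ge\|P\|^{-1}\|PX^\epsilon_K(t)\|$ and $\|P\|$ is locally bounded by Lemma~\ref{lem:cty-of-eigenspaces}, the whole task reduces to the bound
\[
\E\|PX^\epsilon_K(t)\|^2\ge c\,\epsilon^K e^{2(\lambda-\eta/4)t}\|x_0\|^2 ,
\]
uniformly in the stated neighbourhood and for $t\ge C$. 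The extra $e^{\eta t/4}$ then absorbs all constants once $t\ge C$. By orthogonality of chaoses, it suffices to bound from below the $K$-th homogeneous chaos component $\Pi_K PX^\epsilon_K(t)$, which comes only from the $k=K$ term of \eqref{eq:chaos-expansion} with the product of the $\tilde z$'s replaced by its Wick product.

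\textbf{Choice of $K$.} Let $\tilde A=A(\tilde z_0)$, $\tilde P=P_0(\tilde z_0)$ (which is nonzero), and for a word $J\in[d]^k$ set
\[
f_J(s_1,\dots,s_k)=\tilde P\, e^{-s_1\tilde A}B_{J_1}e^{s_1\tilde A}\cdots e^{-s_k\tilde A}B_{J_k}e^{s_k\tilde A}\tilde v_0 .
\]
Taylor coefficients of $f_J$ at $0$ are $\tilde P$ applied to products of elements $\ad_{\tilde A}^m(B_i)$ acting on $\tilde v_0$. Expanding $\ad_{\tilde A}=\ad_A+\sum_j (\tilde z_0)_j\ad_{B_j}$ shows that the span of all such products applied to $\tilde v_0$ is invariant under $S$, so by Assumption~\ref{as:nondegen} it is all of $\R^n$. (If needed, I would instead run the argument with $\ad_{\tilde A}$ in the assumption, which is equivalent up to this bookkeeping.) Hence $\tilde P$ of some product is nonzero, and some $f_J$ is not identically zero. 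I take $K$ to be the minimal length for which this happens. This choice of $K$ depends only on $(\tilde z_0,\tilde v_0)$, as the statement requires. By analyticity in $(s_j)$ and continuity in $(z_0,v_0,\epsilon)$, the same nonvanishing persists in a $\delta$-neighbourhood, on a fixed compact box $[0,1]^K$ with a uniform lower bound. This uses that $P_{\eta/4}(z_0)$ is close to $P_{\eta/4}(\tilde z_0)$, whose range contains that of $\tilde P$ once $\eta/4<\eta^*$; otherwise I shrink $\eta$.

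\textbf{Variance computation.} Write $\Pi_K PX^\epsilon_K(t)= e^{t\tilde A}P\,Y(t)$ with
\[
Y(t)=\sum_{|J|=K}\int_{\Delta^K(t)} F_J(t_1,\dots,t_K)\, {:}\textstyle\prod_j\tilde z_{J_j}(t_j){:}\,dt ,
\]
where $F_J$ is the rescaled version of $f_J$. Since $\tilde A$ restricted to $\operatorname{Ran}P$ has spectrum with real parts $\ge\lambda-\eta/4$, we get $\|e^{t\tilde A}Pw\|\ge c\,e^{(\lambda-\eta/4)t}t^{-n}\|Pw\|$. So it remains to show $\E\|PY(t)\|^2\ge c\,\epsilon^K\|x_0\|^2$ uniformly in $t\ge C$ and small $\epsilon$. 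For fixed $t$, $\epsilon^{-K}\E\|PY(t)\|^2$ converges as $\epsilon\to0$ to a quadratic form of the kernel against the Brownian covariance $\prod(s_j\wedge r_j)$. This form is strictly positive when the kernel is nonzero, because Brownian covariance is a positive definite kernel and multiple Wiener integrals of nonzero symmetric kernels have nonzero variance.

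\textbf{Main obstacle.} The hard part is uniformity in $t$, both for large $t$ and up to times of order $\ln\epsilon^{-1}$ where $\epsilon t$ is not negligible. The difficulty is that the $t_j$-integrals over $[1,t]$ carry factors $e^{\pm t_j\tilde A}$ that could in principle cancel the contribution from $[0,1]^K$. My first attempt will be to decompose the Gaussian field into its part generated by the Brownian increments on $[0,1]$ and an independent remainder. Conditioning on the remainder, which is an orthogonal projection in $L^2$, the variance of $PY(t)$ is bounded below by the variance of the multiple integral of the restricted kernel against $[0,1]$-increments. Because $K$ is minimal, lower-order cross terms vanish after applying $P$. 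This gives a $t$-independent lower bound, up to the $e^{(\lambda-\eta/4)t}$ factor already extracted and up to terms in which $e^{-t_j\tilde A}$ acts on lower modes. Those terms I expect to control using the spectral gap and the choice of $P_{\eta/4}$.
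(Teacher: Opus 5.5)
Your architecture matches the paper's. You reduce to the top homogeneous chaos, apply hypercontractivity and Paley--Zygmund (\Cref{lem:paley-zygmund}), choose $K$ via \Cref{as:nondegen} as in \Cref{lem:algebra-prop}, and peel off $e^{t\tilde A}P$.

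\textbf{The gap.} The step everything hinges on is the bound $\E\|PY(t)\|^2\ge c\,\epsilon^K\|x_0\|^2$, uniformly in $t\ge C$, small $\epsilon$, and $(z_0,v_0)$ near $(\tilde z_0,\tilde v_0)$. This is not proved. Positive definiteness of the Brownian covariance gives positivity for each fixed $t$ only, with no uniformity. The conditioning sketch does not close it either, for two reasons.
\begin{enumerate}
\item Minimality of $K$ plays no role. After conditioning on the increments of $W$ on $[1,t]$, the parts of order $m<K$ in the $[0,1]$-increments are automatically orthogonal to the order-$K$ part. Moreover, minimality holds only at the base point, not at nearby $(z_0,v_0)$.
\item More seriously, the kernel of the order-$K$ part against the $[0,1]$-increments still contains the integrals over $s_j\in[1,t]$, and these are not small. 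Take $K=1$ and a simple real top eigenvalue. Then $PF_J(s)$ contains an $s$-independent term proportional to the top-mode component of $v_0$. It also contains a decaying term whose integral over $[1,t]$ is of order one (of order $1/\eta^*$ when the gap is small). So these contributions are at least comparable to the $[0,1]$ piece and can grow linearly in $t$ near the base point.
\end{enumerate}
No spectral-gap estimate makes these terms negligible. What has to be ruled out is cancellation, not size.

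\textbf{How the paper closes this step.} It argues quantitatively. From a nonzero Taylor coefficient $\partial^k f_{J^*}(0)$ of size $\ge e^{(\lambda-c)t}$ and derivative bounds $Me^{(\lambda+c)t}$, \Cref{lem:taylor-expansion} gives $\|f_{J^*}\|_{L^2(\Delta^K(1))}\gtrsim e^{(\lambda-c')t}$. Then \Cref{lem:chaos-terms-l2-lower-bound} converts kernel norm into variance via $\|G^{\otimes K}f\|\gtrsim \|f\|^2/\|f\|_{H^{2K}}$, using that $\epsilon D-\partial_r$ is a left inverse of $G$ together with interpolation. The possibly large kernel on $[1,t]$ therefore costs only $e^{O(c)t}t^{-K}$, which is absorbed by taking $c\ll\eta$.

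\textbf{How your conditioning route could be completed.} You need a structural observation. For $r_j\in[0,1]$, the contribution of $s_j\in[1,t]$ to the kernel is, as a function of $r_j$, of the form $e^{\epsilon D r_j}c$ with $c\in\R^d$, tensored with whatever appears in the other variables.
\begin{itemize}
\item Let $Q$ be the orthogonal projection in $L^2([0,1],\R^d)$ onto the complement of this $d$-dimensional space.
\item Let $G_1h(r)=\int_r^1 e^{-\epsilon D(s-r)}h(s)\,ds$. Then $QG_1$ is injective, because $G_1h$ vanishes at $r=1$ and $\epsilon D-\partial_r$ inverts it.
\item The variance is then at least a constant times $\epsilon^K\|(QG_1)^{\otimes K}\operatorname{Sym}(\Id_{\Delta^K(1)}PF)\|^2$.
\item This quantity is independent of $t\ge 1$, positive at the base point by your choice of $K$, and continuous in $(z_0,v_0,\epsilon)$.
\end{itemize}
This would give the uniform bound, arguably more cleanly than the paper's Taylor and interpolation bookkeeping.

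\textbf{Two minor points.} First, $\tilde z^\epsilon$ has mean $(e^{-\epsilon Dt}-1)z_0\neq 0$, so it is not centred; this only affects lower chaoses and is harmless. Second, to commute $P$ with $e^{t\tilde A}$ you should use the spectral projection of $A^\epsilon(z_0)$ rather than that of $A(z_0)$.
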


\begin{proof}
We may assume $\eta \in (0,\eta^*(\tilde z_0))$ (see \Cref{def:eta-star}). By \Cref{lem:algebra-prop}, there is some $K \in \N_0$, $J^* \in [d]^K$, and $k = (k_1,\dots,k_K) \in \N_0^K$ such that
     $$\Big\|P_{\eta}(\tilde z_0)\prod_{j=1}^K \operatorname{ad}_{A(\tilde z_0)}^{k_j}(B_{J^*_j})\tilde v_0\Big\| > 0 \,.$$
     Using the notation $\tilde A \coloneqq A^\epsilon(z_0)$, we conclude by \Cref{lem:cty-of-eigenspaces} that there 
     are constants $\delta, \rho > 0$ such that for all $(z_0,v_0)$ with
     $$\|\tilde z_0 - z_0\| + \|\tilde v_0 - v_0\| \leq \rho$$
     and $\epsilon < \rho$ we have
     $$\Big\|P_\eta(z_0)\prod_{j=1}^K \operatorname{ad}_{\tilde A}^{k_j}(B_{J^*_j})v_0\Big\| \geq \delta \,.$$
     Note also that $\|\prod_{j=1}^K \operatorname{ad}_{\tilde A}^{k_j}(B_{J^*_j})v_0\|$ is bounded above and below by positive constants. Then we may apply \Cref{lem:uniform-exp-lower-bound} to conclude that, for any fixed $c$ (to be chosen later),
     one can choose $t$ large enough and possibly lower the
     value of $\rho$ such that
     \begin{equation}\label{eq:the-term-that-gives-lower-bound}
         \Big\|P_\eta(z_0)e^{t\tilde A}\prod_{j=1}^K \operatorname{ad}_{\tilde A}^{k_j}(B_{J^*_j})v_0\Big\| \geq e^{(\lambda - c/3)t} \Big\|\prod_{j=1}^K \operatorname{ad}_{\tilde A}^{k_j}(B_{J^*_j})v_0\Big\| \geq e^{(\lambda - c/2)t} \,.
     \end{equation}
     (Recall that $\lambda \coloneqq \lambda(A^\epsilon(z_0))$.)
    For every $J \in [d]^\ell$ with $0 \leq \ell \leq K$, define
    $$f_J(t_1,\dots,t_\ell) \coloneqq e^{t \tilde A}\prod_{j=1}^\ell e^{-t_j\tilde A}B_{J_j}e^{t_j\tilde A}v_0 \,,$$
    so that by \eqref{eq:chaos-expansion} we have
    $$X_K^\epsilon(t) = \|x_0\|\sum_{\ell=0}^K \sum_{J \in [d]^\ell} I_J(f_J) \,.$$
    Given multiindices $I \in \N_0^\ell,J \in [d]^\ell$ it follows that
    \begin{equation}\label{eq:partial-I-f-J}
        \partial^I f_J(t_1,\dots,t_\ell) = (-1)^{|I|}e^{t \tilde A}\prod_{j=1}^\ell e^{-t_j\tilde A}\operatorname{ad}_{\tilde A}^{I_j}(B_{J_j})e^{t_j \tilde A}v_0 
    \end{equation}
    (where $\partial^I$ denotes $\prod_{j=1}^\ell\partial_{t_j}^{I_j}$).
Then by \eqref{eq:the-term-that-gives-lower-bound}  and $\|P_\eta(z_0)\|$ being uniformly bounded, (see \Cref{lem:cty-of-eigenspaces}, in particular $\|P_\eta(z_0)\| \le e^{ct/2}$ for $t$ large enough)
    \begin{equation}\label{eq:new-term-that-gives-lower-bound}
        \|\partial^{k} f_{J^*}(0)\| = \Big\|e^{t\tilde A}\prod_{j=1}^K \operatorname{ad}_{\tilde A}^{k_j}(B_{J^*_j})v_0\Big\| \geq e^{(\lambda - c)t}
    \end{equation}
    for large enough $t$ (recall $k \in \N_0^K$).
    
    On the other hand, it follows from \eqref{eq:partial-I-f-J} and \Cref{lem:uniform-exponential-bound} that for all $r$ there is some $M \geq 1$ such that
    \begin{equ}[e:boundDerf]
    \sup_{\Delta^K(t)} \|D^r f_{J^*}(t_1,\dots,t_K)\| \leq Me^{(\lambda + c)t}\;.
    \end{equ}
    When $K \geq 1$, using the above and \eqref{eq:new-term-that-gives-lower-bound}, by \Cref{lem:taylor-expansion} (with $m = k_1 + \dots k_K$) we conclude
$$\|f_{J^*}\|_{L^2(\Delta^K(1))} \geq e^{(\lambda-c)t}C(K,m)M^{-K/2-m}e^{-2ct(K/2+m)} \,.$$
When $K = 0$ then the above (with $C(K,m)$ removed) is a direct consequence of \eqref{eq:new-term-that-gives-lower-bound} because $f_{J^*}$ is a constant when $J^*$ is empty.
Using \eqref{e:boundDerf} (potentially enlarging $M$) and the notation from \Cref{lem:chaos-terms-l2-lower-bound} we obtain
$$\|f_{J^*}\|_{H^{2K}(\Delta^K(t))} \leq Me^{(\lambda + c)t} \,.$$
Then absorbing all constants into $a >0$, by \Cref{lem:chaos-terms-l2-lower-bound} we have
 $$\Prb\Big(\|X_K^\epsilon(t)\| \geq a\epsilon^{K/2}t^{-K}e^{2(\lambda-c)t}e^{-2ct(K+2m)}e^{-(\lambda + c)t}\|x_0\|\Big) \geq \frac{3^{-2K}}{4} \,,$$
so the claim follows by first choosing $c$ so that $c(2+2(K+2m)+1) < \eta/2$, then choosing $C \geq 1$ large enough so that $e^{[\eta/2 - c(2+2(K+2m)+1)]t} \geq a^{-1}t^{K}$ for all $t \geq C$.
\end{proof}

\begin{lem}\label{lem:remainer-control}
    For every $(\tilde z_0,\tilde v_0) \in \R^d \times S^{n-1}$, $\delta > 0$, $a \in (0,1/2)$, and $K \in \N_0$ there are some $C,\rho > 0$ such that for all $(z_0,v_0)$ with $$\|\tilde z_0 - z_0\| + \|\tilde v_0 - v_0\| \leq \rho \,,$$
$\epsilon < \rho$, and $t \in [C,C\epsilon^{-a}]$ we have
    $$\Prb(\|R_K^\epsilon(t)\| >  \epsilon^{(K+1)(1/2 - 2a)}e^{(\lambda + a)t}\|x_0\|) \leq \delta \,,$$
    where $\lambda \coloneqq \lambda(A^\epsilon(z_0))$ (recall the notation $R^\epsilon_K(t)$ from \eqref{eq:chaos-expansion}).
\end{lem}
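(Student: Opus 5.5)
The remainder is $R^\epsilon_K(t) = \sum_{J\in[d]^{K+1}} I_J(g_J)$ with
$$g_J(t_1,\dots,t_{K+1}) = e^{t\tilde A}\Big[\prod_{j=1}^{K+1} e^{-t_j\tilde A}B_{J_j}e^{t_j\tilde A}\Big]e^{-t_{K+1}\tilde A}x^\epsilon(t_{K+1}).$$
The plan is to bound it pathwise on a high-probability event where $z^\epsilon$ stays close to $z_0$ up to time $t\le C\epsilon^{-a}$. We combine two ingredients: an a priori exponential bound on $x^\epsilon$, and an exponential bound on $e^{s\tilde A}$ uniform in the parameters.

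\textbf{Step 1: uniform semigroup bound.} Set $\tilde A = A^\epsilon(z_0)$. By \Cref{lem:uniform-exponential-bound} (continuity of the spectrum in $(z_0,\epsilon)$), choose $\rho$ small so that
$$\|e^{s\tilde A}\|\le M e^{(\lambda+a/4)s}\quad\text{for all } s\ge0.$$
We also need a bound on $e^{-s\tilde A}$. This would grow like $e^{(-\lambda_{\min}+\dots)s}$, which is too crude. However, each product $e^{(t_{j-1}-t_j)\tilde A}$ has a nonnegative time argument on $\Delta^{K+1}(t)$. We therefore regroup $g_J$ as
$$e^{(t-t_1)\tilde A}B_{J_1}e^{(t_1-t_2)\tilde A}B_{J_2}\cdots B_{J_{K+1}}x^\epsilon(t_{K+1}),$$
so that only forward propagators appear. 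This gives
$$\|g_J\|\le M^{K+1}\|B\|^{K+1}e^{(\lambda+a/4)(t-t_{K+1})}\|x^\epsilon(t_{K+1})\|.$$

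\textbf{Step 2: a priori bound on $x^\epsilon$.} Write $x^\epsilon$ via the Duhamel formula around $\tilde A$:
$$x^\epsilon(s) = e^{s\tilde A}x_0 + \int_0^s e^{(s-r)\tilde A}B(\tilde z^\epsilon(r))x^\epsilon(r)\,dr.$$
Let $\beta \coloneqq \sup_{r\le t}\|\tilde z^\epsilon(r)\|$. Apply Gronwall to $u(s)=e^{-(\lambda+a/4)s}\|x^\epsilon(s)\|$, which gives
$$\|x^\epsilon(s)\|\le M\|x_0\|e^{(\lambda+a/4+M\|B\|\beta)s}.$$

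\textbf{Step 3: small deviations of $z^\epsilon$.} On $[0,C\epsilon^{-a}]$ the process $\tilde z^\epsilon(r) = z_{\epsilon r}-z_0$ moves over OU time at most $C\epsilon^{1-a}$. Hence $\sup_{r\le C\epsilon^{-a}}\|\tilde z^\epsilon(r)\|$ is of order $\epsilon^{(1-a)/2}$ up to logarithmic factors, with the drift term $O(\epsilon^{1-a})$ negligible. By a Gaussian maximal inequality or \Cref{lem:z-remains-small}, with probability at least $1-\delta$ we have $\beta\le \epsilon^{1/2-a}$ for $\epsilon<\rho$. This uses the margin $\epsilon^{1/2-a}\gg\epsilon^{(1-a)/2}\sqrt{\ln}$, which holds since $a<1/2$ makes $1/2-a<(1-a)/2$. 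On this event, $M\|B\|\beta s\le M\|B\|\epsilon^{1/2-a}C\epsilon^{-a}$. This quantity is $\le 1$ if $a<1/4$. For $a\in[1/4,1/2)$ we instead use $\beta t\lesssim \epsilon^{1/2-2a}$, which may be large. The fix is to only require $M\|B\|\beta\le a/4$, which holds for $\epsilon$ small since $\beta\to0$. The growth rate in Step 2 then becomes $\lambda+a/2$, and this is the better route in all cases.

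\textbf{Step 4: assemble.} On the event of Step 3,
$$\|I_J(g_J)\|\le \beta^{K+1}\,\mathrm{vol}(\Delta^{K+1}(t))\,M^{K+2}\|B\|^{K+1}\|x_0\|e^{(\lambda+a/2)t}\le \beta^{K+1}t^{K+1}C'e^{(\lambda+a/2)t}\|x_0\|.$$
Since $\beta t\le \epsilon^{1/2-a}C\epsilon^{-a}=C\epsilon^{1/2-2a}$, summing over $d^{K+1}$ multi-indices gives
$$\|R_K^\epsilon(t)\|\le C''\epsilon^{(K+1)(1/2-2a)}e^{(\lambda+a/2)t}\|x_0\|.$$
Finally $C''\le e^{at/2}$ once $t\ge C$ with $C$ large, which yields the claimed bound with probability at least $1-\delta$.

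The main obstacle is getting uniform-in-$(z_0,\epsilon)$ exponential bounds with rate arbitrarily close to $\lambda$ (Step 1) together with the regrouping that avoids backward propagators. I would lean on \Cref{lem:uniform-exponential-bound} for the former. The probability estimate on $\sup\|\tilde z^\epsilon\|$ is routine.
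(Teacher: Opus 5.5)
Your proposal is correct and follows the paper's proof. Both restrict to a high-probability event on which $\sup\|\tilde z^\epsilon\|$ is small, combine \Cref{lem:uniform-exponential-bound} with Gronwall to get $\|x^\epsilon(s)\|\lesssim e^{(\lambda+a/2)s}\|x_0\|$, then bound the $(K+1)$-fold simplex integral and absorb the polynomial factor in $t$ into $e^{at/2}$; your forward-propagator regrouping is what the paper uses tacitly. The one difference is the event: you use $\sup_{s\le C\epsilon^{-a}}\|\tilde z^\epsilon(s)\|\le\epsilon^{1/2-a}$, which keeps $\tilde z^\epsilon$ small, as the Gronwall step needs, for every $a\in(0,1/2)$, whereas the paper's threshold $\epsilon^{1/2-2a}$ on $[0,\epsilon^{-2a}]$ is $\ge 1$ once $a\ge 1/4$ (harmless there, since only $a<1/8$ is used later).
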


\begin{proof}
    Using the notation from \eqref{eq:chaos-expansion}, we have by an identical argument to \Cref{lem:z-remains-small} that
    $$\Prb\Big(\sup_{t \leq \epsilon^{-2a}} \|\tilde z^\epsilon(t)\| > \epsilon^{1/2-2a}\Big) \lesssim \frac{\epsilon^{1-2a}}{\epsilon^{1 - 4a}} = \epsilon^{2a} \,,$$
    and for the rest of the proof we assume this event does not happen, so $\|\tilde z^\epsilon(t)\| \leq \epsilon^{1/2-2a}$ for $t \leq \epsilon^{-2a}$.
    By \Cref{lem:uniform-exponential-bound} we have $\|e^{t \tilde A}\| \lesssim e^{(\lambda + a/4)t}$ (for all $t$). Then using the mild formulation of \eqref{eq:x-def} as in \eqref{eq:chaos-expansion} we have
    $$\|x^\epsilon(t)\| \lesssim e^{(\lambda + a/4)t}\|x_0\| + \epsilon^{1/2-2a}\int_0^t e^{(\lambda + a/4)(t-s)}\|x^\epsilon(s)\|ds \,.$$
    By applying Gronwall's inequality to $f(t) \coloneqq \sup_{s \leq t} e^{-(\lambda + a/4)s}\|x^\epsilon(s)\|$ and making $\epsilon$ small enough, we obtain
    $$\|x^\epsilon(t)\| \lesssim e^{(\lambda + a/2)t}\|x_0\| \,,$$
    where the inequality is uniform over $z_0,v_0,\epsilon$ in some ball and $t \leq \epsilon^{-2a}$. Using
    \eqref{eq:chaos-expansion} and the definition of $I_J$ then gives
    $$\|R^\epsilon_K(t)\| \lesssim \epsilon^{(K+1)(1/2 - 2a)}t^{K+1}e^{(\lambda + a/2)t}\|x_0\| \,,$$
    which implies the claim (make $C$ large so that $t^{K+1}$ is a lot smaller than $e^{at/2}$ for all $t \geq C$).
\end{proof}

\subsection{Growth of \texorpdfstring{$x$}{x} implies growth of unstable modes}
The lemma below shows that the exponential decay of $x^\epsilon(t)$ is the same as the expected decay of the ``stable modes'' (the part of $v$ which lives on the nondominant eigenspaces, $v - P_\eta(z)v$) if there is very little mass on the ``unstable modes'' (the part of $v$ which lives on the dominant eigenspaces, $P_\eta(z)v$). In other words, if $x^\epsilon(t)$ grows like the unstable modes do, then the proportion of mass on these modes cannot be too small.

Recall \Cref{lem:well-posed,def:sigma-lambda,def:proj-eta,def:eta-star}. As in the previous lemmas, below $z_0$, $v_0$, and $x_0 = \|x_0\|v_0$ always refer to the initial conditions for $z^\epsilon(t)$, $v^\epsilon(t)$, and $x^\epsilon(t)$.

\begin{lem}\label{lem:high-mass-decays}
    For every $\tilde z_0 \in \R^d$, $\eta \in (0,\eta^*(\tilde z_0))$, and $C, \kappa, \rho > 0$ there is some $\delta > 0$ such that for all $v_0 \in S^{n-1}$, $z_0 \in \R^d$ with $\|\tilde z_0 - z_0\| < \delta$, and $\epsilon < \delta$ we have
    \begin{align*}
       \Prb\Big( \sup_{t \leq T_\epsilon} \|P_{\eta}(z_0)v^\epsilon(t)\| \leq \delta \text{ and } \|x^\epsilon(T_\epsilon)\| \geq e^{(\lambda-\eta+\kappa)T_\epsilon}\|x_0\|\Big) < \rho\,,
    \end{align*}
    where we set $T_\epsilon = C\ln \epsilon^{-1}$ and $\lambda \coloneqq \lambda^\epsilon(z_0) \coloneqq \lambda(A^\epsilon(z_0))$.
\end{lem}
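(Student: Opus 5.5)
We will prove the lemma by a pathwise estimate on the event where $\|P_\eta(z_0)v^\epsilon(t)\|\le\delta$ for all $t\le T_\epsilon$. The aim is to show that on this event (intersected with a high-probability event on which $z$ barely moves) the growth of $\|x^\epsilon\|$ is governed by the stable part of $\tilde A = A^\epsilon(z_0)$. Write $P = P_\eta(z_0)$ and $Q = \Id - P$. Note that $P$ and $Q$ commute with $\tilde A$ when $\epsilon$ is small: the spectral gap $\eta^*(\tilde z_0) > \eta$ persists under small perturbations of $(z_0,\epsilon)$ by \Cref{lem:cty-of-eigenspaces}, so $P$ is the spectral projection of $\tilde A$ onto a spectral set separated from the rest. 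This gives
\begin{equation*}
\|e^{t\tilde A}Q\| \lesssim e^{(\lambda - \eta' )t}
\end{equation*}
for some $\eta' > \eta$ and all $t\ge 0$, uniformly in the ball, by \Cref{lem:uniform-exponential-bound} applied to the restriction of $\tilde A$ to $\operatorname{Ran} Q$. Indeed, the spectrum of $\tilde A|_{\operatorname{Ran}Q}$ has real part at most $\lambda-\eta'$ for some $\eta'$ strictly between $\eta$ and $\eta^*(\tilde z_0)$. We also use \Cref{lem:z-remains-small}: with probability at least $1-\rho/2$ we have $\sup_{t\le T_\epsilon}\|\tilde z^\epsilon(t)\|\le \epsilon^{1/8}$, since $T_\epsilon = C\ln\epsilon^{-1}\ll \epsilon^{-2/3}$.

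Next we apply Duhamel with respect to $\tilde A$ to the component $Qx^\epsilon$:
\begin{equation*}
Qx^\epsilon(t) = e^{t\tilde A}Qx_0 + \int_0^t e^{(t-s)\tilde A}Q\,B(\tilde z^\epsilon(s))\,x^\epsilon(s)\,ds.
\end{equation*}
On the good event we write $x^\epsilon(s) = r(s)v(s)$ and use $\|Pv(s)\|\le\delta$, so that $\|Qv(s)\|\ge 1-\delta$. Hence $r(s)\le 2\|Qx^\epsilon(s)\|$ and $\|x^\epsilon(s)\|\le 2\|Qx^\epsilon(s)\|$. Setting $g(t) = e^{-(\lambda-\eta')t}\|Qx^\epsilon(t)\|$ gives the bound
\begin{equation*}
g(t)\lesssim \|x_0\| + \epsilon^{1/8}\int_0^t g(s)\,ds.
\end{equation*}
Gronwall then yields $g(t)\lesssim \|x_0\| e^{c\epsilon^{1/8}t}$. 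Since $t\le C\ln\epsilon^{-1}$, the exponent $c\epsilon^{1/8}t$ tends to $0$, so $g(T_\epsilon)\lesssim\|x_0\|$. Consequently
\begin{equation*}
\|x^\epsilon(T_\epsilon)\|\le 2\|Qx^\epsilon(T_\epsilon)\|\lesssim e^{(\lambda-\eta')T_\epsilon}\|x_0\|.
\end{equation*}

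This contradicts $\|x^\epsilon(T_\epsilon)\|\ge e^{(\lambda-\eta+\kappa)T_\epsilon}\|x_0\|$ once $T_\epsilon$ is large, that is, once $\epsilon$ is small, because $\eta'>\eta-\kappa$. It follows that the event in the statement is contained in the complement of the good $z$-event, which has probability less than $\rho$. One subtlety is that $\lambda=\lambda(A^\epsilon(z_0))$ and $P$ depend on $z_0$, so the constants above must be uniform over $z_0$ in a $\delta$-ball around $\tilde z_0$. We obtain this uniformity from the continuity of spectral projections and of $\lambda$ (\Cref{lem:cty-of-eigenspaces}).

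The main obstacle is this uniform semigroup bound on $\operatorname{Ran}Q$ with a rate strictly below $\lambda-\eta$, with a constant independent of $z_0$. Jordan blocks could produce polynomial prefactors, but these are absorbed by choosing $\eta'$ slightly smaller, as \Cref{lem:uniform-exponential-bound} allows. Apart from this, the argument is a routine Gronwall estimate. Note that $\kappa$ even gives slack, so it suffices that $\eta'\ge\eta-\kappa/2$.
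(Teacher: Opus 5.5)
Your Duhamel--Gronwall strategy works and is a genuinely different route from the paper's, but one step is false as written. By \Cref{def:proj-eta}, $P=P_\eta(z_0)$ is the spectral projection of $A(z_0)=A^0(z_0)$, not of $\tilde A=A^\epsilon(z_0)$. In general $A^\epsilon\neq A^0$: in the Navier--Stokes application $A^0=0$ while $A^\epsilon$ is $\epsilon$ times the dissipation, and in Lorenz~63 there are $\epsilon$ and $\epsilon^2$ terms. So $Q$ need not commute with $\tilde A$, and the identity $Qx^\epsilon(t)=e^{t\tilde A}Qx_0+\int_0^t e^{(t-s)\tilde A}QB(\tilde z^\epsilon(s))x^\epsilon(s)\,ds$ is not valid. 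Moreover $Qe^{t\tilde A}$ does not obey an $e^{(\lambda-\eta')t}$ bound, because it picks up a leak of order $\|Q\tilde P\|e^{\lambda t}$ from the top spectral cluster of $\tilde A$.

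Either of two repairs suffices:
\begin{enumerate}
\item Expand around $A(z_0)$ and put $(A^\epsilon-A)+B(\tilde z^\epsilon(s))$ into the perturbation. The Gronwall factor becomes $\exp\bigl(c(\|A^\epsilon-A\|+\epsilon^{1/8})T_\epsilon\bigr)$. This need not tend to $1$, since there is no rate on $A^\epsilon\to A$, but it is at most $e^{\kappa T_\epsilon/4}$ for small $\epsilon$. It is then absorbed by the $\kappa$ slack, together with $|\lambda(A(z_0))-\lambda(A^\epsilon(z_0))|$.
\item Replace $P$ by the Riesz projection $\tilde P$ of $\tilde A$ onto its top cluster. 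Since $\|\tilde P-P_\eta(z_0)\|\to0$ as $(z_0,\epsilon)\to(\tilde z_0,0)$, you get $\|\tilde Pv^\epsilon(t)\|\le\frac12$ on the event once $\delta$ is small. That is all your argument needs, and it then runs verbatim with $\tilde Q=\Id-\tilde P$.
\end{enumerate}
Separately, \Cref{lem:uniform-exponential-bound} does not literally apply to a restriction to a moving subspace. The uniform bound $\|e^{t\tilde A}\tilde Q\|\lesssim e^{(\lambda-\eta')t}$ follows instead from the contour formula $e^{t\tilde A}\tilde Q=\frac{1}{2\pi i}\oint_\Gamma e^{t\zeta}(\zeta-\tilde A)^{-1}\,d\zeta$, with $\Gamma$ enclosing the lower cluster. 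Alternatively, run that lemma's perturbation argument on the continuous family $\tilde A\tilde Q+\mu\tilde P$ with $\mu$ very negative.

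With this fix, compare with the paper. The paper works with the projective process. It uses \Cref{lem:description-of-Pinv} and tightness to get a uniform finite-time bound $\frac1T\int_0^T\Pp_s(\Lambda-\lambda)\,ds\le-\eta+\kappa/3$ on $\{P_\eta v=0\}$ at $\epsilon=0$. It then transfers this bound to a neighbourhood and to small $\epsilon$ via the Feller property and \cite[Prop.~4.15]{BenaimHurth22}. Finally it controls $\ln\|x^\epsilon(T_\epsilon)\|$ over blocks of length $T$ with a drift/martingale decomposition, a stopping time and Doob's inequality. You instead exploit linearity directly. On the event $\sup_{t\le T_\epsilon}\|\tilde z^\epsilon(t)\|\le\epsilon^{1/8}$ your conclusion is purely deterministic, so probability enters only through \Cref{lem:z-remains-small}, and no ergodic averaging or martingale control is needed. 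The paper's softer argument uses only the classification of $\epsilon=0$ invariant measures plus continuity, so it is less tied to explicit spectral projections. In this linear setting, though, your proof is shorter and more transparent.
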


\begin{proof}
    Denote the semigroups corresponding to the processes $(z^\epsilon(t),v^\epsilon(t))$ as $\Pp_t^\epsilon$, and for $\epsilon = 0$ we write $\Pp_t$.
    Choosing $\delta > 0$ so that $z_0 \mapsto P_\eta(z_0)$ is continuous for $\|z_0 - \tilde z_0\| < 2\delta$ (\Cref{lem:cty-of-eigenspaces}), define the compact set
    $$\mcM_\infty \coloneqq \{(z_0,v_0) \mid \|\tilde z_0 - z_0\| \leq \delta \text{ and }P_\eta(z_0)v_0 = 0\} \,.$$
    Recall \eqref{eq:L-lnr-def} and define $\lambda(z) \coloneqq \lambda(A(z))$ and $\lambda^\epsilon(z) \coloneqq \lambda(A^\epsilon(z))$. Since the $z$ component doesn't move under $\Pp_t$, we have
    \begin{equ}[e:boundLambda0]
	    \limsup_{t \to \infty} \frac{1}{t}\int_0^t (\Pp_s \Lambda)(z_0,v_0) ds \leq \lambda(z_0) - \eta\;,
    \end{equ}
    uniformly for $(z_0,v_0) \in \mcM_\infty$. (If the convergence weren't uniform over $\mcM_\infty$, we could find sequences 
    $(z_n,v_n) \in \mcM_\infty$ and $T_n \uparrow \infty$ such that \eqref{e:boundLambda0} fails along that sequence. 
    By tightness however, there is some subsequence of $\frac{1}{T_n}\int_0^{T_n} \Pp_s(z_n,v_n)ds$ which converges, and by \cite[Lem.~4.6]{extinction} the limit is an invariant measure $\mu$ on $\mcM_\infty$, but by \Cref{lem:description-of-Pinv} all such measures satisfy $\int \bigl(\Lambda(z,v) - \lambda(z) + \eta\bigr)\, d\mu(z,v) \leq 0$, thus yielding a contradiction.)

    In particular, we can find $T > 0$ such that for all $(z_0,v_0) \in \mcM_\infty$ we have
    $$\frac{1}{T} \int_0^T \Pp_s(\Lambda - \lambda)(z_0,v_0)ds \leq - \eta + \frac{\kappa}{3} \,.$$
    Since $H \coloneqq \Lambda^\epsilon(z,v) - \lambda^\epsilon(z)$ is a continuous function by \Cref{lem:cty-of-eigenspaces}, by the Feller property and \cite[Prop.~4.15]{BenaimHurth22}, using that $|z|^2/(1 + |H|)$ is proper and
    \begin{equation}\label{eq:square-bound}
        \sup_{\|z_0 - \tilde z_0\| + \epsilon \leq 3\delta } \frac{1}{T}\int_0^T (\Pp_s^\epsilon |z|^2)(z_0,v_0)ds < \infty\,,
    \end{equation}
    we conclude that
    \begin{equation}\label{eq:def-of-f}
        f(z_0,v_0,\epsilon) \coloneqq \int_0^T \Pp_s^\epsilon (\Lambda^\epsilon - \lambda^\epsilon)(z_0,v_0)ds \leq \Big(- \eta + \frac{\kappa}{2}\Big)T
    \end{equation}
    in a neighborhood of $\mcM_\infty$. In particular, by shrinking $\delta$ we may assume that \eqref{eq:def-of-f} holds if $\|z_0 - \tilde z_0\| < 2\delta$, $\epsilon < \delta$, and $\|P_\eta(z_0)v_0\| \leq \delta$.
    Over all such $z_0,v_0,\epsilon$, let $M < \infty$ be such that
    \begin{equation}\label{eq:mg-square-bound}
        \E\bigg[\Big(\int_0^T |\Lambda^\epsilon(z^\epsilon(s),v^\epsilon(s)) - \lambda^\epsilon(z^\epsilon(s))|ds\Big)^2\bigg] \leq M\,,
    \end{equation}
    which exists by $|\Lambda^\epsilon(z,v)| \lesssim 1 + |z|$, $|\lambda^\epsilon(z)| \leq \|A^\epsilon(z)\| \lesssim 1 + |z|$, and \eqref{eq:square-bound}.
    Fix $z_0,v_0,\epsilon$ with $\|z_0 - \tilde z_0\| < \delta$ and $\epsilon < \delta$.
     Write
    $$X_n \coloneqq \int_0^{nT} \bigl(\Lambda^\epsilon(z^\epsilon(s),v^\epsilon(s)) - \lambda^\epsilon(z^\epsilon(s))\bigr)\,ds = A_n + M_n$$
    where $M_n$ is a martingale and
    \begin{equation}\label{eq:tau-for-high-decay}
        \tau \coloneqq \inf\{n \geq 0 \mid \|P_\eta(z^\epsilon(nT))v^\epsilon(nT)\| > \delta \text{ or } |z^\epsilon(nT) - z_0| > \delta\} \,.
    \end{equation}
    Explicitly, with $\Delta_k \coloneqq \int_{kT}^{(k+1)T} \bigl(\Lambda^\epsilon(z^\epsilon(s),v^\epsilon(s)) - \lambda^\epsilon(z^\epsilon(s))\bigr)\,ds$, we have
    \begin{equ}
        A_n = \sum_{k=0}^{n-1} f(z^\epsilon(kT), v^\epsilon(kT), \epsilon) \;,\quad
        M_n = \sum_{k=0}^{n-1} \bigl(\Delta_k - \E[\Delta_k \mid \F_{kT}]\bigr) \,.
    \end{equ}
    Then we have by \eqref{eq:def-of-f} that
    \begin{equation}\label{eq:an-inequality}
        A_{n \wedge \tau} \leq (n \wedge \tau)T\Big(-\eta + \frac{\kappa}{2}\Big) \,.
    \end{equation}
    Since $\E[M_{n \wedge \tau}^2] \leq \sum_{k=0}^{n-1} \E[\Id_{\tau > k}\Delta_k^2]$, the Markov property, \eqref{eq:mg-square-bound}, and Doob's inequality yield
    \begin{equation}\label{eq:def-of-bad-event-a}
        \Prb(\CA) \coloneqq \Prb\Big(\sup_{n \leq T_\epsilon/T}|M_{n \wedge \tau}| > \frac{\kappa T_\epsilon}{8}\Big) \leq  M\frac{T_\epsilon}{T}\Big(\frac{\kappa}{8}T_\epsilon\Big)^{-2} \,.
    \end{equation}
    By \eqref{eq:dlnr}
    \begin{equation}\label{eq:ln-xepsilon-inequality}
        \begin{aligned}
            \ln \|x^\epsilon((n\wedge \tau)T)\| - \ln\|x_0\| &= X_{n\wedge \tau} + \int_0^{(n\wedge \tau)T} \lambda^\epsilon(z^\epsilon(s))ds \\
            &\leq X_{n\wedge \tau} + (n\wedge \tau)\lambda T + \frac{\kappa}{4}nT
        \end{aligned}
    \end{equation}
    when $\delta$ is small enough so that $|z - z_0| \leq \delta$ implies $|\lambda^\epsilon(z) - \lambda| = |\lambda^\epsilon(z) - \lambda^\epsilon(z_0)| \leq \kappa/4$.
    If $\CA^c$ and $\CB \coloneqq \{\tau T > T_\epsilon\}$ occur, by \eqref{eq:dlnr}, \eqref{eq:ln-xepsilon-inequality}, and \eqref{eq:an-inequality} we have, with $n$ such that $nT \leq T_\epsilon <(n+1)T$,
    $$ \ln \|x^\epsilon(T_\epsilon)\| \leq \ln\|x_0\| + \Big(\lambda - \eta + \frac{7}{8}\kappa\Big)T_\epsilon + \int_{nT}^{T_\epsilon} |\Lambda^\epsilon(z^\epsilon(s), v^\epsilon(s))|ds + (|\lambda| + \eta)T \,.$$
    Since $\epsilon$ can be taken to be small and $\Lambda$ is continuous, we obtain
    \begin{equation}\label{eq:final-contradiction}
        \|x^\epsilon(T_\epsilon)\| \leq e^{(\lambda-\eta+15\kappa/16)T_\epsilon}\|x_0\| \,.
    \end{equation}
Since \eqref{eq:def-of-bad-event-a} implies $\Prb(\CA^c) \to 1$ as $\epsilon \downarrow 0$ and \Cref{lem:z-remains-small} implies $$\lim_{\epsilon \downarrow 0}\Prb\Big(\sup_{t \leq T_\epsilon} \|P_{\eta}(z_0)v^\epsilon(t)\| \leq \delta \text{ and } \CB^c\Big) = 0 \,,$$
  up to an arbitrarily small probability event we have $\sup_{t \leq T_\epsilon} \|P_{\eta}(z_0)v^\epsilon(t)\| \leq \delta$ implies \eqref{eq:final-contradiction}, which prevents $\|x^\epsilon(T_\epsilon)\| \geq e^{(\lambda-\eta+\kappa)T_\epsilon}\|x_0\|$, proving the claim.
\end{proof}

\subsection{Proof of \texorpdfstring{\Cref{lem:mass-from-high-to-low}}{Lemma}}
\label{sec:firstLemma}

The intuition is that \Cref{lem:mass-generation,lem:remainer-control} imply that $x$ grows
while \Cref{lem:high-mass-decays} shows that this then forces the 
unstable modes to grow, which is essentially what \Cref{lem:mass-from-high-to-low} quantifies. 

To make this precise, let $(\tilde z_0, \tilde v_0) \in \R^d \times S^{n-1}$ and $\eta \in (0,\eta^*(\tilde z_0))$ be fixed, 
let $K \in \N_0$ be as in \Cref{lem:mass-generation}, and let $C \coloneqq (K+1)/(2\eta)$. We also set $T_\epsilon = C \ln \epsilon^{-1}$ as above. Then by \Cref{lem:mass-generation} there exists $\delta, \rho > 0$ such that for all $(z_0,v_0,\epsilon)$ within distance $\rho$ of $(\tilde z_0, \tilde v_0, 0)$ we have
    \begin{equation}\label{eq:lower-bound-on-xk}
        \Prb(\|X_K^\epsilon(T_\epsilon)\| >  \epsilon^{K/2}e^{(\lambda - \eta/(2K+2))T_\epsilon}\|x_0\|) \geq \delta
    \end{equation}
    (where the notation is explained in \Cref{lem:mass-generation}). Choose $a \in (0,1/2)$ small enough that
    $$1/4 < 1/2-2a-2aK-\frac{a(K+1)}{2\eta} \,.$$
    Then by \Cref{lem:remainer-control} (after possibly reducing $\rho$) we have
    \begin{equation}\label{eq:upper-bound-on-rk}
        \Prb(\|R_K^\epsilon(T_\epsilon)\| >  \epsilon^{(K+1)(1/2 - 2a)}e^{(\lambda + a)T_\epsilon}\|x_0\|) \leq \frac{1}{2}\delta \,.
    \end{equation}
    Note that $e^{(\lambda + a)T_\epsilon} = \epsilon^{-C(\lambda+a)}$ and that the power of $\epsilon^{-1}$ in \eqref{eq:upper-bound-on-rk} is smaller than the one in \eqref{eq:lower-bound-on-xk}:
    \begin{align*}
        -(K+1)(1/2 - 2a)+(\lambda + a)C &= \lambda C  -\frac{K}{2} - \Big(1/2-2a-2aK-\frac{a(K+1)}{2\eta}\Big) \\
        &< \lambda C - \frac{K}{2} - \frac{1}{4} \\
        &= -\frac{K}{2}+(\lambda - \eta/(2K+2))C \,,
    \end{align*}
   so we conclude (for $\epsilon$ small enough) that
    $$\Prb\Big(\|x^\epsilon(T_\epsilon)\| \geq  \epsilon^{K/2}e^{(\lambda - 2\eta/(3K+3))T_\epsilon}\|x_0\|\Big) \geq \frac{1}{2}\delta \,.$$
    Since the factor in front of $T_\epsilon$ above is
    $$-\frac{K}{2C} + \lambda - 2\eta/(3K+3) = \lambda-\eta\Big(\frac{K}{K+1} + \frac{2}{3}\frac{1}{K+1}\Big) = \lambda -\eta +\kappa$$
    for some $\kappa > 0$, we conclude by \Cref{lem:high-mass-decays} (set $\rho = \delta/4$).\qed

\subsection{Proof of \texorpdfstring{\Cref{lem:mass-stays-on-high}}{Lemma}}
\label{sec:secondLemma}

We need to show that if we have a bit of mass on the unstable modes then (projectively) that mass will endure for a long time. Intuitively, this is because the unstable modes grow at a faster exponential rate than the stable ones.

By \Cref{lem:z-remains-small} we see that, given any $\delta >0$, we can find $\rho$ such that,
with probability at least $1-\delta$, one has 
$\|z^\epsilon(t) - \tilde z_0\| \leq 2\rho^{1/8}$
 for all $t \leq \epsilon^{-1/2}$ provided
that $\epsilon < \rho$ and $\|z^\epsilon(0) - \tilde z_0\|\le \rho$. 
By \Cref{lem:cty-of-eigenspaces}, the map $z \mapsto P_\eta(z)$ is continuous on $\{z\,:\, \|z - z_0\| \leq 2\rho^{1/8}\}$ 
(by possibly decreasing the value of $\rho$). It therefore remains to show, by possibly choosing
$\rho$ even smaller, the deterministic implication that if $z$ is any continuous function such that 
$\sup_{t \le \epsilon^{-1/2}}\|z(t) - \tilde z_0\| \le 2\rho^{1/8}$,
 and   $\|P_\eta(z_0)v_0\| \geq \delta$, then one has
    $$\inf_{T \leq t \leq \epsilon^{-1/2}} \frac{\|P_{\eta}(z_0)x_t\|}{\|x_t\|} \ge \frac{\delta}{3}\;,$$
for every solution $x^\epsilon$ to the ODE $\dot x^\epsilon = A^\epsilon(z_t)x_t^\epsilon$.

Note first that the triangle inequality $\|x\| \le \|x-y\|+\|y\|$ implies that
\begin{equ}
\frac{\|y\|}{\|x-y\|} \ge \delta \implies \frac{\|y\|}{\|x\|} \ge \frac{\delta}{1+\delta}\;,
\end{equ}
for any two vectors $x,y$ in a Banach space. In particular, this shows that if $\delta \le 1$ then one has
\begin{equation}\label{eq:going-to-c}
    \frac{\|P_\eta(z_0)x\|}{\|x - P_\eta(z_0)x\|} \geq \delta \implies \frac{\|P_\eta(z_0)x\|}{\|x\|} \geq  \frac{\delta}2\;.
\end{equation}
We may choose $T > 0$ large enough (of the order $\log \delta^{-1}$)
such that for all $v \in S^{n-1}$ with $\|P_\eta(\tilde z_0)v\| \geq \delta/2$ and $t \in [T, 2T]$ we have
\begin{equ}[e:boundProjAv]
    \|P_\eta(\tilde z_0)e^{tA(\tilde z_0)}v\| \geq 2e^{(\lambda(\tilde z_0) -\eta)t} \quad \text{and} \quad \|e^{tA(\tilde z_0)}v - P_\eta(\tilde z_0)e^{tA(\tilde z_0)}v\| \leq \frac{1}{2}e^{(\lambda(\tilde z_0) -\eta)t} \,.
\end{equ}
(The above is elementary to prove using Jordan canonical form and Gelfand's lemma, see for example the proofs of \Cref{lem:uniform-exponential-bound,lem:uniform-exp-lower-bound}.) By continuity of $P_\eta$ and compactness of $S^{n-1}$,
it follows that $\|P_\eta(z_0)v\| \geq \delta$ implies $\|P_\eta(\tilde z_0)v\| \geq \delta/2$ provided that $\rho$ is
chosen sufficiently small and we can replace $A(\tilde z_0)$ by $A^\epsilon(z_0)$ and $P_\eta(\tilde z_0)$ by $P_\eta(z_0)$ in \eqref{e:boundProjAv}
(modulo losing a constant factor in both inequalities), and therefore
    \begin{equation}\label{eq:ratios-below-and-above}
         \|P_\eta(z_0)v\| \geq \delta \implies \frac{\|P_\eta(z_0)e^{tA^\epsilon(z_0)}v\|}{\|e^{tA^\epsilon(z_0)}v - P_\eta(z_0)e^{tA^\epsilon(z_0)}v\|} \geq 2 \,.
    \end{equation}
    
    Recall that the variation of constants applied to \eqref{eq:x-def} yields
    $$x^\epsilon(t) = e^{tA^\epsilon(z_0)}x_0 + \sum_{i=1}^d \int_0^t e^{(t-s)A^\epsilon(z_0)}B_ix^\epsilon(s)(z_i(s)-z_0)\,ds \,.$$
    Since $\|z(t)-z_0\| \leq \rho^{1/8}$, it follows from Gronwall's inequality that there is some constant $C > 0$ (depending 
    on $T$ as above and therefore on $\delta$, but not on $\rho$) such that
    $$\Big\|\sum_{i=1}^d \int_0^t e^{(t-s)A^\epsilon(z_0)}B_ix^\epsilon(s) z_i(s)\,ds\Big\| \leq C\rho^{1/8}\|x_0\| \,.$$
    By making $\rho$ small enough, we can combine the above with \eqref{eq:ratios-below-and-above} 
    and the fact that, by \eqref{e:boundProjAv}, the numerator in this expression is lower bounded by some constant
    $m$ (depending on $T$)
    to conclude that
    $$\frac{\|P_\eta(z_0)x^\epsilon(0)\|}{\|x^\epsilon(0)\|} \geq \delta \implies \frac{\|P_\eta(z_0)x^\epsilon(t)\|}{\|x^\epsilon(t) - P_\eta(z_0)x^\epsilon(t)\|} \geq 1 \,,$$
    and by continuity (recall the numerator is bounded below) and \eqref{eq:going-to-c} we obtain
    $$\|P_\eta(z_0)v_0\| \geq \delta \implies \|P_\eta(z(t))v^\epsilon(t)\| \geq \frac13 \ge 2\delta \,,$$
    uniformly over $t \in [T,2T]$, where we set $v^\epsilon = x^\epsilon / \|x^\epsilon\|$. This bound can now be iterated 
    to show that $\|P_\eta(z(t))v^\epsilon(t)\| \ge 2\delta$ for all $t \in [T, \epsilon^{-1/2}]$,
    and the claim follows since, by continuity, $\|(P_\eta(z(t))- P_\eta(z_0))v^\epsilon(t)\| \le \delta$
    provided that $\rho$ is small enough.

\section{Appendix}\label{sec:appendix}
\begin{lem}\label{lem:z-remains-small}
    For every $M \geq 0$ there is some $C_M > 0$ such that for all $z_0 \in \R^d$ with $|z_0| \leq M$,
    $$\Prb\Big(\sup_{t \leq \epsilon^{-2/3}} \|z^\epsilon(t) - z_0\| > \epsilon^{1/8}\Big) \leq C_M\epsilon^{\frac{1}{12}} \,.$$
\end{lem}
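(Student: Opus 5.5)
We need to bound $\Prb(\sup_{t\le \epsilon^{-2/3}}\|z^\epsilon(t)-z_0\|>\epsilon^{1/8})$. The plan is to pass to the original time scale and split the deviation into a deterministic drift part and a martingale part. Since $z^\epsilon(t)=z_{\epsilon t}$, the supremum over $t\le\epsilon^{-2/3}$ is the supremum of $\|z_s-z_0\|$ over $s\le \epsilon^{1/3}$, where $z$ solves \eqref{eq:z-def} with $z_0$ fixed. By the mild/integral form,
\begin{equ}
z_s - z_0 = -\int_0^s D z_r\,dr + W_s\;,
\end{equ}
so $\sup_{s\le \epsilon^{1/3}}\|z_s-z_0\| \le \|D\|\,\epsilon^{1/3}\sup_{s\le\epsilon^{1/3}}\|z_s\| + \sup_{s\le \epsilon^{1/3}}\|W_s\|$. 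It therefore suffices to show that each of these two terms exceeds $\epsilon^{1/8}/2$ with probability at most $C_M\epsilon^{1/12}$ (for $\epsilon\le 1$; for $\epsilon>1$ the bound is trivial after enlarging $C_M$, and in any case $\epsilon\le1$ in the paper).

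For the Brownian term, Doob's $L^2$ maximal inequality gives $\E\sup_{s\le \epsilon^{1/3}}\|W_s\|^2\le 4d\,\epsilon^{1/3}$. Chebyshev then yields
\begin{equ}
\Prb\Big(\sup_{s\le\epsilon^{1/3}}\|W_s\|>\tfrac12\epsilon^{1/8}\Big)\le 16d\,\epsilon^{1/3-1/4}=16d\,\epsilon^{1/12},
\end{equ}
which is exactly the claimed rate and explains the exponent $\tfrac1{12}$.

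For the drift term, we use the crude bound $\sup_{s\le 1}\|z_s\|\le \|z_0\| + \|D\|\int_0^1\|z_r\|dr + \sup_{s\le1}\|W_s\|$. Gronwall then gives $\sup_{s\le1}\|z_s\|\le e^{\|D\|}(M+\sup_{s\le1}\|W_s\|)$, whose second moment is bounded by a constant $C'_M$. Hence, by Chebyshev,
\begin{equ}
\Prb\big(\|D\|\epsilon^{1/3}\sup_{s\le1}\|z_s\| > \tfrac12\epsilon^{1/8}\big)\lesssim C'_M\,\epsilon^{2(1/3-1/8)},
\end{equ}
which is $\le C\epsilon^{1/12}$ since $5/12>1/12$. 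Summing the two bounds gives the lemma.

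There is no real obstacle here. The only points requiring care are the time change $t\mapsto\epsilon t$ (used to convert the window $t \le \epsilon^{-2/3}$ into $s\le\epsilon^{1/3}$) and the uniformity in $|z_0|\le M$, which is handled by the Gronwall bound. The same argument with $\epsilon^{-2a}$ and $\epsilon^{1/2-2a}$ in place of $\epsilon^{-2/3}$ and $\epsilon^{1/8}$ gives the variant used in \Cref{lem:remainer-control}.
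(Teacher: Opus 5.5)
Your proof is correct and follows essentially the same route as the paper. In both, the deviation $z^\epsilon(t)-z_0$ is split into a drift part and a Brownian part, and Doob's $L^2$ maximal inequality plus Chebyshev are applied, so that the noise contributes $\epsilon^{1/3-1/4}=\epsilon^{1/12}$ and the drift only $\epsilon^{5/12}$.

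The only difference is how the drift is controlled. The paper stops the process at the exit time $\tau$ from the ball of radius $M+1$. It then removes the stopping time by noting that $\tau<\epsilon^{-2/3}$ already forces a displacement of at least $1>\epsilon^{1/8}$. You instead bound $\sup_{s\le 1}\|z_s\|$ pathwise via Gronwall, which avoids the localization step.
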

\begin{proof}
    Fix $M > 0$. Recall \eqref{eq:zvr-system-def} and define $\tau \coloneqq \inf\{t \geq 0 \mid \|z^\epsilon(t)\| > M+1\}$. For any $T > 0$, by Doob's martingale inequality we have
    $$\E\Big[\sup_{t \leq T \wedge \tau} \|z^\epsilon(t) - z_0\|^2\Big] \lesssim \epsilon^2T^2 + \epsilon T \,.$$
    Setting $T = \epsilon^{-2/3}$, we have
    $$\Prb\Big(\sup_{t \leq \epsilon^{-2/3} \wedge \tau} \|z^\epsilon(t) - z_0\| > \epsilon^{1/8}\Big) \lesssim \frac{\epsilon^2\epsilon^{-4/3} + \epsilon \epsilon^{-2/3}}{\epsilon^{1/4}} \lesssim \epsilon^{1-2/3-1/4} = \epsilon^{\frac{1}{12}}$$
    (recall $\epsilon \in [0,1]$). By continuity of the sample paths,
    $$\tau < \epsilon^{-2/3} \implies \sup_{t \leq \epsilon^{-2/3} \wedge \tau} \|z^\epsilon(t) - z_0\| \geq 1 > \epsilon^{1/8} \,,$$
    so
    $$\Prb\Big(\sup_{t \leq \epsilon^{-2/3} \wedge \tau} \|z^\epsilon(t) - z_0\| > \epsilon^{1/8}\Big) = \Prb\Big(\sup_{t \leq \epsilon^{-2/3}} \|z^\epsilon(t) - z_0\| > \epsilon^{1/8}\Big) \,,$$
    which finishes the claim.
\end{proof}

\begin{lem}\label{lem:cty-of-eigenspaces}
    Recall \Cref{def:sigma-lambda,def:eta-star}.
    \begin{itemize}
         \item $T \mapsto \lambda(T)$ is continuous on all of $\R^{n \times n}$.
         \item If $z_0 \in \R^d$ and $\eta \in (0,\eta^*(z_0))$ then the function $z \mapsto P_\eta(z)$ as defined in \Cref{def:proj-eta} is continuous in a neighborhood of $z_0$.
    \end{itemize}
\end{lem}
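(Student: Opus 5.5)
The first item is the classical continuity of the roots of a polynomial in its coefficients. The map $T \mapsto \det(\mu - T)$ sends $\R^{n\times n}$ continuously into the monic polynomials of degree $n$. By Hurwitz's theorem (Rouché applied on small circles), the multiset of roots depends continuously on the coefficients in the Hausdorff (matching) sense. Concretely, fix $T_0$ and $r>0$ smaller than half the minimal distance between distinct eigenvalues of $T_0$. Then for $T$ close to $T_0$, each disc $B(\lambda_j,r)$ around an eigenvalue of $T_0$ contains exactly $m_j$ eigenvalues of $T$, counted with multiplicity, and no eigenvalue of $T$ lies outside these discs. Hence $|\lambda(T)-\lambda(T_0)|\le r$, and letting $r\to0$ gives continuity of $\lambda$.

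For the second item, fix $z_0$ and $\eta\in(0,\eta^*(z_0))$. Split $\sigma(A(z_0))$ into the eigenvalues with $\Re\lambda = \lambda(A(z_0))$ (the "top" set) and the rest, whose real parts are at most $\lambda(A(z_0))-\eta^*(z_0) < \lambda(A(z_0))-\eta$. Choose $r$ small enough that:
\begin{itemize}
\item the discs $B(\lambda_j,r)$ are disjoint;
\item every top eigenvalue disc lies in $\{\Re\mu > \lambda(A(z_0))-\eta+2r\}$;
\item every other disc lies in $\{\Re\mu < \lambda(A(z_0))-\eta-2r\}$.
\end{itemize}
Let $\Gamma$ be a contour, a union of small circles, enclosing exactly the top discs. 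The Riesz projection
\[
Q(z) \coloneqq \frac{1}{2\pi i}\oint_\Gamma (\mu - A(z))^{-1}\,d\mu
\]
is defined and continuous in $z$ as long as $\Gamma$ avoids $\sigma(A(z))$. By the first step, this holds for $z$ near $z_0$, because all eigenvalues of $A(z)$ stay within $r$ of those of $A(z_0)$. The resolvent is jointly continuous in $(\mu,z)$ on the compact set $\Gamma\times\overline{B(z_0,\rho)}$, so $Q$ is continuous there. It is standard that $Q(z)$ is the spectral projection onto the sum of the generalized eigenspaces of the eigenvalues inside $\Gamma$, along the remaining ones. Conjugation symmetry of $\Gamma$ makes $Q(z)$ real.

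It remains to check that $Q(z)=P_\eta(z)$ for $z$ near $z_0$; this is the only delicate point, because $P_\eta(z)$ is defined relative to $\lambda(A(z))$, which moves with $z$. By the first step, $|\lambda(A(z))-\lambda(A(z_0))|<r$, and $\lambda(A(z))$ is attained inside a top disc. Eigenvalues of $A(z)$ inside the top discs therefore have real part greater than $\lambda(A(z_0))-\eta+r > \lambda(A(z))-\eta$, so they are kept by $P_\eta(z)$. Eigenvalues of $A(z)$ in the other discs have real part less than $\lambda(A(z_0))-\eta-r < \lambda(A(z))-\eta$, so they are discarded. Thus the index set in \Cref{def:proj-eta} for $A(z)$ is exactly the set of eigenvalues enclosed by $\Gamma$, so $P_\eta(z)=Q(z)$, which is continuous. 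The same argument also bounds $\|P_\eta(z)\|$ uniformly near $z_0$, as is used elsewhere in the paper.
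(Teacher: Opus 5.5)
Your proof is correct and is essentially the standard argument behind the paper's one-line citation of Kato's Theorem II.5.1: continuity of eigenvalue groups via Rouché, and continuity of the total projection via the Riesz integral. The one problem-specific step is identifying $P_\eta(z)$ with the Riesz projection onto the top group even though $\lambda(A(z))$ moves with $z$. The paper leaves this implicit, and your margins, which are achievable precisely because $0<\eta<\eta^*(z_0)$, handle it correctly.
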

\begin{proof}
This is standard, see for example \cite[Thm~II.5.1]{Kato}.
\end{proof}

\begin{lem}\label{lem:taylor-expansion}
    Let $K \in \N$, $m \in \N_0$, $M \geq \delta > 0$, and $f:\Delta^K(1) \to \R^n$ be smooth and such that
    $$\|D^m f(0)\| \geq \delta \quad \text{and} \quad \sup_{t \in \Delta^K(1)} \|D^{m+1} f(t)\| \leq M\,.$$
    Then there is some $C(K,m)$ (independent of $f$) such that
    $$\|f\|_{L^2(\Delta^K(1))} \geq C(K,m)\delta^{K/2+m+1}M^{-K/2-m} \,.$$
\end{lem}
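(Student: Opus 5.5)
Fix $f$ as in the statement and set $\delta' \coloneqq \delta/(2M)\in(0,1/2]$. The strategy is to find a small simplex $\Delta^K(r)$ near the vertex $0$ on which the $m$th derivative stays large. A polynomial-type lower bound on such a set then gives the $L^2$ bound. Concretely, I will show that on $\Delta^K(r)$ with $r$ comparable to $\delta/M$, $\|f\|$ cannot be uniformly small, because otherwise $m$ derivatives would be forced to be small. The exponents come from three sources: the volume $r^K$ of the small simplex, which gives $r^{K/2}$ after taking square roots; the scale $r^m$ of a degree-$m$ Taylor term; and one further factor $\delta$ from the size of $D^m f(0)$.

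\textbf{Step 1 (localize).} Take $r \coloneqq c_0\,\delta/M \le c_0$, with $c_0=c_0(K,m)$ small. On $\Delta^K(r)$ we have $\|D^m f(t)-D^m f(0)\|\le M r \sqrt K\le \delta/2$ by the mean value theorem. So $\|D^m f(t)\|\ge\delta/2$ throughout $\Delta^K(r)$. Write $f = p + g$, where $p$ is the degree-$m$ Taylor polynomial of $f$ at $0$. Then $|g(t)|\le M |t|^{m+1}/(m+1)!$ on $\Delta^K(r)$.

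\textbf{Step 2 (rescale).} Set $\tilde f(s)\coloneqq f(rs)$ for $s\in\Delta^K(1)$. Then $\|f\|_{L^2(\Delta^K(r))}^2=r^K\|\tilde f\|_{L^2(\Delta^K(1))}^2$, and $\tilde p(s)\coloneqq p(rs)$ is a polynomial of degree $\le m$ whose top-degree part has coefficients of size at least $r^m\delta/m!$ in some direction. By equivalence of norms on the finite-dimensional space of $\R^n$-valued polynomials of degree $\le m$ on $\Delta^K(1)$, this gives $\|\tilde p\|_{L^2(\Delta^K(1))}\ge c_1 r^m\delta$. To keep the constant independent of $n$, I reduce to scalar polynomials by pairing with a unit vector $e$ for which $e\cdot D^m f(0)$ has a large entry. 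The remainder satisfies $\|\tilde g\|_{L^2}\le C M r^{m+1}$. Since $r=c_0\delta/M$, we have $CMr^{m+1}=C c_0 r^m\delta\le \tfrac12 c_1 r^m\delta$ once $c_0$ is small. Hence $\|\tilde f\|_{L^2}\ge \tfrac12 c_1 r^m\delta$.

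\textbf{Step 3 (combine).} Combining the two steps,
\begin{equation*}
\|f\|_{L^2(\Delta^K(1))}\ge r^{K/2}\cdot\tfrac12 c_1 r^m\delta = C(K,m)\,\delta^{K/2+m+1}M^{-K/2-m},
\end{equation*}
which is the claimed bound. Here we use that $\Delta^K(r)\subset\Delta^K(1)$ since $r\le1$.

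\textbf{Main obstacle.} The main point needing care is the norm-equivalence lower bound $\|\tilde p\|_{L^2(\Delta^K(1))}\gtrsim$ (size of the top coefficients). This bound must be uniform in $n$ and must involve only the degree-$m$ part, whereas the lower-order coefficients of $p$ are unknown. I would handle this by using the seminorm $q\mapsto \|D^m q\|$ on polynomials of degree $\le m$, which equals the size of the top-order coefficients up to constants. Its kernel is the space of polynomials of degree $<m$. I then apply compactness in the quotient by that kernel: on the unit sphere of $\{\deg\le m\}/\{\deg<m\}$, the quantity $\inf_{\deg q<m}\|p+q\|_{L^2}$ is a continuous function that is positive, since a polynomial vanishing in $L^2$ on a set with nonempty interior is zero. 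This yields $\|\tilde p\|_{L^2}\ge c_1\|D^m\tilde p\|$ with $c_1=c_1(K,m)$, after reducing to scalar polynomials by pairing with $e$ as above.
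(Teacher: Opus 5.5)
Your proposal is correct and follows essentially the paper's route: Taylor-expand to order $m$, bound the remainder by $M|t|^{m+1}$, rescale to a small simplex $\Delta^K(\rho)$ with $\rho\sim\delta/M$, and apply a finite-dimensional norm-equivalence lower bound to the rescaled Taylor polynomial. The paper differs only in reducing to $\delta=1$ and optimizing over $\rho$ where you fix $r=c_0\delta/M$; your quotient-by-lower-degree-polynomials argument is a more explicit version of its appeal to linear independence, and your Step~1 (the bound $\|D^m f(t)\|\ge\delta/2$ on $\Delta^K(r)$) is never used and can be dropped.
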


\begin{proof}
We may assume $\delta = 1$ since the general case follows by replacing $f$ with $\delta^{-1}f$.
    Write the Taylor expansion of $f$:
    $$f(t_1,\dots,t_K) = \sum_{j=0}^m \frac{1}{j!}\sum_{I \in [K]^j} \partial_If(0)\prod_{i=1}^j t_{I_i} + R(t_1,\dots,t_K)$$
    where $|R(t_1,\dots,t_K)| \leq \frac{M}{(m+1)!}|t|^{m+1}$ and here $|t| = \max_i t_i$ and $\partial_I$ denotes $\prod_{l=1}^j \partial_{I_l}$. Then for all $\rho \in [0,1]$
    $$\|R\|_{L^2(\Delta^K(\rho))} \leq \frac{M}{\sqrt{K!}(m+1)!}\rho^{K/2+m+1} \,.$$
    Using the substitution $s_i = \rho^{-1}t_i$, we have
    $$\Big\|\sum_{j=0}^m \frac{1}{j!}\sum_{I \in [K]^j} \partial_If(0)\prod_{i=1}^j t_{I_i}\Big\|^2_{L^2(\Delta^K(\rho))} = \rho^K \Big\|\sum_{j=0}^m \frac{\rho^j}{j!}\sum_{I \in [K]^j} \partial_If(0)\prod_{i=1}^j s_{I_i}\Big\|^2_{L^2(\Delta^K(1))} \,.$$
    Since the polynomials of degree at most $m$ are linearly independent in $L^2(\Delta^K(1))$, there is some $c_{K,m} > 0$ such that
    $$\Big\|\sum_{j=0}^m \frac{1}{j!}\sum_{I \in [K]^j} \partial_If(0)\prod_{i=1}^j t_{I_i}\Big\|_{L^2(\Delta^K(\rho))} \geq c_{K,m}\rho^{K/2+m} \,,$$
    so lower bounding $\|f\|_{L^2(\Delta^K(1))}$ by $\|f\|_{L^2(\Delta^K(\rho))}$ for an optimal choice of $\rho \in [0,1]$ we conclude
    \begin{equation*}
        \|f\|_{L^2(\Delta^K(1))} \geq \rho^{K/2+m}\Big(c_{K,m} - \frac{M}{\sqrt{K!}(m+1)!}\rho\Big) \geq C(K,m,M)
    \end{equation*}
    A short computation shows that the supremum occurs at
    $$\rho \coloneqq 1 \wedge \frac{K/2+m}{K/2+m+1}\frac{\sqrt{K!}(m+1)!}{M}c_{K,m} \,,$$
    which gives a value of
    $C(K,m,M) = C'(K,m)M^{-K/2-m}$ when $M$ is larger than some $M^* > 0$. If $M \in [1,M^*]$ we have $C(K,m,M) \geq C(K,m) \geq C(K,m)M^{-K/2-m}$ for some $C(K,m) > 0$, which finishes the claim (recall $\delta = 1$).
\end{proof}

\begin{lem}\label{lem:algebra-prop}
    Under \Cref{as:nondegen}, for every $(z,v) \in \R^d \times S^{n-1}$ and $\eta > 0$ there are $K \in \N_0$, $J \in [d]^K$, and $k_1,\dots,k_K \in \N_0$ such that
    $$\Big\|P_\eta(z)\prod_{j=1}^K \operatorname{ad}_{A(z)}^{k_j}(B_{J_j})v\Big\| > 0$$
    (see \Cref{def:proj-eta}).
\end{lem}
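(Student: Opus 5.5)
We argue by contradiction, using the fact that the range of $P_\eta(z)$ is a nonzero $A(z)$-invariant subspace. Fix $(z,v)$ and $\eta>0$, and suppose that $P_\eta(z)\prod_{j=1}^K \operatorname{ad}_{A(z)}^{k_j}(B_{J_j})v = 0$ for every $K$, $J$ and $k$. Let $\CA$ be the associative algebra (with unit) generated by $S(z) \coloneqq \{\operatorname{ad}_{A(z)}^k(B_i) \mid k\ge 0,\ i\in[d]\}$. Set
$$W \coloneqq \operatorname{Span}\{Mv \mid M \in \CA\}\;.$$
By construction $W$ is invariant under every element of $S(z)$, and it contains $v\neq 0$, so $W \neq \{0\}$. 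The assumption says exactly that $W \subset \ker P_\eta(z)$. Since $P_\eta(z)$ projects onto the sum of the generalized eigenspaces with $\Re\lambda_i \ge \lambda(A(z))-\eta$, which always contains the top eigenvalue, we have $P_\eta(z)\neq 0$. Hence $W \neq \R^n$, and $W$ is a nontrivial invariant subspace for $S(z)$.

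It then remains to transfer \Cref{as:nondegen}, which concerns $S = \{\operatorname{ad}_A^k(B_i)\}$ with $A=A^0$, to the set $S(z)$ with $A(z) = A + \sum_i z_iB_i$. I would show that $W$ is also invariant under $\operatorname{ad}_A^k(B_i)$ for all $k$. We have $\operatorname{ad}_{A} = \operatorname{ad}_{A(z)} - \sum_i z_i \operatorname{ad}_{B_i}$. Expanding $\operatorname{ad}_A^k(B_i)$ therefore produces nested commutators in which each step applies either $\operatorname{ad}_{A(z)}$ or $\operatorname{ad}_{B_l}$. I would check by induction on $k$ that each such term lies in the Lie algebra generated by $S(z)$.

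The inductive step goes as follows. Let $\mathfrak h$ be the Lie algebra generated by $S(z)$; since $B_i = \operatorname{ad}^0_{A(z)}(B_i)$, each $B_l$ lies in $\mathfrak h$. Then $\operatorname{ad}_{B_l}$ maps $\mathfrak h$ to itself. For $\operatorname{ad}_{A(z)}$, the Jacobi identity shows that $\operatorname{ad}_{A(z)}$ is a derivation, and it maps the generating set $S(z)$ into $S(z)$. A derivation preserving a generating set preserves the Lie algebra it generates, so $\operatorname{ad}_{A(z)}\mathfrak h\subset\mathfrak h$. Consequently $\operatorname{ad}_A^k(B_i) \in \mathfrak h \subset \CA$. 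Elements of $\CA$ leave $W$ invariant, because $W$ is spanned by the $\CA$-orbit of $v$. Thus $W$ is a nontrivial invariant subspace for $S$, which contradicts \Cref{as:nondegen}.

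The main point needing care is this closure argument: one must verify that $\operatorname{ad}_{A(z)}$ preserves $\mathfrak h$, which follows from the derivation property. Everything else is bookkeeping. Finally, the existence of a single product $\prod_j \operatorname{ad}^{k_j}_{A(z)}(B_{J_j})$ with nonzero projection follows directly, since $W$ is spanned by vectors of exactly that form (the empty product, $K=0$, giving $v$ itself).
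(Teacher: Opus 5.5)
Your proof is correct and follows essentially the same route as the paper: you build an $S(z)$-invariant subspace inside $\ker P_\eta(z)$, use $\operatorname{ad}_A = \operatorname{ad}_{A(z)} - \sum_i z_i \operatorname{ad}_{B_i}$ to show it is also invariant under $S$, and then invoke \Cref{as:nondegen} together with $P_\eta(z)\neq 0$. The differences are only cosmetic. You take the cyclic subspace generated by $v$ and argue by contradiction, while the paper takes the joint kernel $\bigcap_M \ker(P_\eta(z)M)$ over all products $M$ and shows it is $\{0\}$; you also place $\operatorname{ad}_A^k(B_i)$ in the Lie algebra generated by $S(z)$ via the derivation property, whereas the paper places it in the associative algebra via the Leibniz rule for $\operatorname{ad}$ on products.
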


\begin{proof}
    Define a subspace $W$ of $\R^n$ as the intersection of the set of all $x \in \R^n$ such that $P_\eta(z)\prod_{j=1}^K \operatorname{ad}_{A(z)}^{k_j}(B_{J_j})x = 0$, where the intersection runs over all choices of $K \in \N_0$, $J \in [d]^K$, and $k \in \N_0^K$. By definition, this subspace is invariant for
    $$S(z) \coloneqq \{\operatorname{ad}_{A(z)}^k(B_i) \mid k \geq 0, i = 1,\dots,d\} \,.$$
    Since $A(z) = A + \sum_{i=1}^d z_iB_i$ we have $\operatorname{ad}_{A} = \operatorname{ad}_{A(z)} - \sum_{i=1}^d z_i\operatorname{ad}_{B_i}$, so
    $$\operatorname{ad}_A^k(B_i) \in \operatorname{Span}\Big(\Big\{\prod_{j=1}^m \operatorname{ad}_{A(z)}^{k_j}(B_{J_j}) \Bigm\vert m \leq k+1, J \in [d]^m, k \in \N_0^m\Big\}\Big) \subset \operatorname{Alg}(S(z))$$
    (where $\operatorname{Alg}(S(z))$ is the algebra generated by $S(z)$) because $\operatorname{ad}_M(\prod_{j=1}^m M_j) = \sum_{j=1}^m M_1\dots M_{j-1}\operatorname{ad}_M(M_j)M_{j+1} \dots M_m$. It follows that $W$ is invariant for all $\operatorname{ad}_A^k(B_i)$, and thus for $S$ (as defined in \Cref{as:nondegen}). By \Cref{as:nondegen} this implies that $W = \{0\}$ or $W = \R^n$. Since $W$ is a subset of the kernel of $P_\eta(z)$ (take $K = 0$) and $P_\eta(z)$ is by definition nonzero, we conclude that $W \neq \R^n$ and thus $W = \{0\}$, which proves the claim.
\end{proof}

The next two lemmas allow us to 
bound expressions of the form  
     \begin{equs}
     I_k(f) &= \int_{[0,t]^k} \scal{f(t_1,\ldots,t_k),z^\eps(t_1)\otimes\dots\otimes z^\eps(t_k)}\,dt_1\ldots dt_k \label{e:defIk}\\
     &= k!\int_{\Delta^k(t)} \scal{(\Pi f)(t_1,\ldots,t_k),z^\eps(t_1)\otimes\dots\otimes z^\eps(t_k)}\,dt_1\ldots dt_k\;,
     \end{equs}
where we view $I_k$ as an operator $I_k \colon \CH^{\otimes k} \to L^2(\Omega,\P)$ with $\CH = L^2([0,t],\R^d)$,
in terms of the homogeneous Wiener chaos
     \begin{equ}[e:defIktilde]
     \tilde I_k(f) = k!\int_{\Delta^k(t)} \scal{(\Pi f)(t_1,\ldots,t_k),dW(t_1)\otimes\dots\otimes dW(t_k)}\,dt_1\ldots dt_k\;.
     \end{equ}
     Here $\Pi \colon \CH^{\otimes k} \to \CH^{\otimes_s k}$ denotes the symmetrisation operator.
     We first show the following lower bound.

\begin{lem}\label{lem:paley-zygmund}
Given $K \in \N_0$ and a sequence of elements $g_k \in \CH^{\otimes_s k}$, let
    $X = \sum_{k=0}^K \tilde I_k(g_k)$.
Then, whenever $\|g_K\| \geq \delta$, one has
    $$\Prb\Big(|X| > \Big[\frac{K!}{2}\Big]^{1/2}\delta\Big) \geq \frac{3^{-2K}}{4} \,.$$
\end{lem}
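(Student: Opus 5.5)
Since $X$ lies in the first $K$ Wiener chaoses, the plan is a Paley--Zygmund argument driven by hypercontractivity. We will show that $\E[X^2] \ge K!\,\delta^2$ and $\E[X^4] \le 3^{4K}(\E[X^2])^2$. Paley--Zygmund with $\theta = 1/2$ then gives
\begin{equ}
\Prb\bigl(X^2 > \tfrac12 \E[X^2]\bigr) \ge \tfrac14 \frac{(\E X^2)^2}{\E X^4} \ge \tfrac{3^{-4K}}{4}.
\end{equ}
This is not quite the claimed $3^{-2K}/4$, so the constant in the fourth-moment bound will have to be handled more carefully (see the last paragraph).

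\emph{Second moment.} The chaoses of different orders are orthogonal in $L^2(\Omega)$. The It\^o isometry for multiple Wiener integrals, in the normalisation \eqref{e:defIktilde}, gives $\E[\tilde I_k(g)^2] = k!\,\|g\|^2$ for symmetric $g$. I will double-check this $k!$ factor against the convention $k!\int_{\Delta^k}$, since it is what produces the $K!$ in the statement. Dropping all terms except $k = K$,
\begin{equ}
\E[X^2] = \sum_{k\le K} k!\,\|g_k\|^2 \ge K!\,\delta^2.
\end{equ}
Consequently, on the event $X^2 > \tfrac12\E X^2$ we have $|X| > [K!/2]^{1/2}\delta$, which is the event in the statement.

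\emph{Fourth moment.} This is where hypercontractivity of the Ornstein--Uhlenbeck semigroup enters. For $F$ in the chaoses of order $\le K$, Nelson's inequality gives $\|F\|_{L^q} \le (q-1)^{K/2}\|F\|_{L^2}$. Taking $q=4$ yields $\|X\|_{L^4} \le 3^{K/2}\|X\|_{L^2}$, i.e.\ $\E X^4 \le 3^{2K}(\E X^2)^2$. Paley--Zygmund then gives exactly
\begin{equ}
\Prb\bigl(X^2 > \tfrac12\E X^2\bigr) \ge \tfrac14 (\E X^2)^2/\E X^4 \ge \tfrac{3^{-2K}}{4}.
\end{equ}
So the correct exponent comes from applying $(q-1)^{K/2}$ to the $L^4$ norm and then squaring, not from squaring twice as in the crude estimate above.

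The only step needing care is justifying the hypercontractive bound when $X$ mixes chaoses of different orders, rather than being homogeneous. I would write $X = \sum_k X_k$ and use $P_s X = \sum_k e^{-ks}X_k$. Choosing $e^{2s} = q-1$, Nelson's theorem gives $\|P_s X\|_{4} \le \|X\|_2$. I would then apply this to $Y = \sum_k e^{ks}X_k$, so that $P_sY = X$. This yields
\begin{equ}
\|X\|_4 \le \|Y\|_2 \le e^{Ks}\|X\|_2 = 3^{K/2}\|X\|_2,
\end{equ}
where the middle inequality uses orthogonality of the chaoses. This settles the mixed-chaos case and completes the plan.
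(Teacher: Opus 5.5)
Your proposal is correct and follows essentially the same route as the paper: hypercontractivity at $q=4$ (with $e^{2s}=3$) applied to the boosted variable $\sum_k e^{ks}X_k$, combined with orthogonality of the chaoses, gives $\|X\|_{L^4}\le 3^{K/2}\|X\|_{L^2}$. Paley--Zygmund at threshold $\tfrac12\E X^2$ and the isometry $\E X^2=\sum_k k!\|g_k\|^2\ge K!\delta^2$ then finish the proof, exactly as in the paper; the preliminary crude $3^{4K}$ estimate in your first paragraph is superfluous and can simply be dropped.
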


\begin{proof}
    Set $t = \frac{1}{2}\ln 3$, so that $q(t) = e^{2t}+1 = 4$, and write $X_k = \tilde I_k(g_k)$ for the homogeneous components of $X$. 
    Then, by hypercontractivity of the Ornstein--Uhlenbeck semigroup $T_t$ (see for example \cite[Def.~5.1.1, Thm~5.1.3]{malliavin}), we have
    $$\|X\|_{L^4}^2 = \|T_t\tilde X\|_{L^4}^2 \leq \|\tilde X\|_{L^2}^2 \leq e^{2Kt}\|X\|_{L^2}^2 = 3^K\|X\|_{L^2}^2  \,,$$
    where $\tilde X = \sum_{k=0}^K e^{kt}X_k$ and the last inequality is by the orthogonality of the Wiener chaoses, see
    \cite[Eq.~4.1]{malliavin}. For $\theta \in [0,1]$, the Paley--Zygmund inequality implies
    $$\Prb(|X| > \theta\|X\|_{L^2}) = \Prb(X^2 > \theta^2\|X\|_{L^2}^2) \geq (1-\theta^2)^2\frac{\|X\|_{L^2}^4}{\|X\|_{L^4}^4} \geq (1-\theta^2)^23^{-2K} \,.$$
    By the isometric property of the $\tilde I_k$ \cite[Eq.~4.1]{malliavin} we have
    $$\|X\|_{L^2}^2 = \sum_{k=0}^K  k! \|g_k\|^2 \ge \delta^2 K! \,,$$
    so the claim follows by setting $\theta^2 = 1/2$.
\end{proof}

\begin{lem}\label{lem:chaos-terms-l2-lower-bound}
    Let $z_0 \in \R^d$, $\epsilon \in (0,1]$,
    $t \geq 1$ be arbitrary and recall \eqref{e:defIk}. Let
    $$X_t \coloneqq \sum_{k=0}^K  I_k(f_k) \,,$$
    where $K \in \N_0$ and $f_k \in \CH^{\otimes_s k}$. Then there exist $C_K > 0$ (depending only on $K$)
    such that
    $$\Prb\Big(|X_t| \geq C_K\epsilon^{K/2}t^{-K}\|f_K\|^2\|f_K\|_{H^{2K}}^{-1}\Big) \geq \frac{3^{-2K}}{4} \,,$$
    where we write $H^m$ for the Sobolev space of order $m$.
\end{lem}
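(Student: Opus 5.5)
The plan is to reduce to \Cref{lem:paley-zygmund} by rewriting $X_t$ as a finite sum of homogeneous Wiener chaoses with respect to the driving Brownian motion. Since $\tilde z^\epsilon(s) = z^\epsilon(s) - z_0 = (e^{-\epsilon D s}-1)z_0 + \sqrt{\epsilon}\int_0^s e^{-\epsilon D(s-r)}\,dW(r)$ (after the time change, with $W$ a standard Brownian motion), each $\tilde z^\epsilon(s)$ is a deterministic part plus a first-chaos element $\sqrt{\epsilon}\,\tilde I_1(h_s)$ with $h_s(r) = e^{-\epsilon D(s-r)}1_{r\le s}$. Expanding the product $\tilde z(t_1)\otimes\cdots\otimes\tilde z(t_k)$ via the product formula for multiple Wiener integrals, $I_k(f_k)$ becomes a sum of homogeneous chaoses of order at most $k$, and its top-order ($k$th) component is $\epsilon^{k/2}\tilde I_k(\mathcal{T}f_k)$, where $\mathcal{T}$ is the operator $f\mapsto \int f(t_1,\dots,t_k)\,h_{t_1}\otimes\cdots\otimes h_{t_k}\,dt$, i.e.\ $(\mathcal{T}f)(r_1,\dots,r_k) = \int_{r_1}^t\!\cdots\!\int_{r_k}^t f(t_1,\dots,t_k)\prod_j e^{-\epsilon D(t_j-r_j)}\,dt$. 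Hence $X_t=\sum_{k\le K}\tilde I_k(g_k)$ with $g_K = \epsilon^{K/2}\Pi\mathcal{T}f_K$, only $f_K$ contributing to chaos $K$. \Cref{lem:paley-zygmund} then gives $\Prb(|X_t|>[K!/2]^{1/2}\|g_K\|)\ge 3^{-2K}/4$, so it remains to prove the deterministic bound $\|\mathcal{T}f_K\|\gtrsim t^{-K}\|f_K\|^2\|f_K\|_{H^{2K}}^{-1}$ (symmetrisation is harmless since $f_K$ is symmetric and $\mathcal{T}$ commutes with permutations when $D$ acts diagonally componentwise).

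For this lower bound, the key is that $\mathcal{T}$ is a (tensor product of) Volterra-type integral operators, essentially $K$-fold integration, and these are injective with an explicit left inverse by differentiation: in one variable $(\mathcal{T}_1 f)(r)=\int_r^t e^{-\epsilon D(s-r)}f(s)\,ds$ satisfies $-\partial_r\mathcal{T}_1f + \epsilon D\,\mathcal{T}_1 f = f$. Hence $f_K = \mathcal{L}\mathcal{T}f_K$ with $\mathcal{L}=\prod_j(-\partial_{r_j}+\epsilon D_{(j)})$, a differential operator of order one in each variable. I would then pair: $\|f_K\|^2 = \langle f_K, \mathcal{L}\mathcal{T}f_K\rangle = \langle \mathcal{L}^* f_K, \mathcal{T}f_K\rangle$ after integrating by parts, giving $\|f_K\|^2\le \|\mathcal{L}^*f_K\|\,\|\mathcal{T}f_K\|\lesssim \|f_K\|_{H^K}\|\mathcal{T}f_K\|$, which is even stronger than needed once boundary terms are handled. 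Since $\epsilon\le1$, the $\epsilon D$ pieces are bounded by lower-order Sobolev norms.

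The main obstacle is the boundary terms in the integration by parts: $\mathcal{T}f$ vanishes at $r_j=t$ but not at $r_j=0$, and $f_K$ lives on the simplex $\Delta^K(t)$ (or the cube after symmetrisation) without vanishing boundary values. To control these I would use traces: boundary terms are bounded by $\|f\|_{L^\infty}$-type quantities times $\|\mathcal{T}f\|_{L^1}$ on faces, and trace/Sobolev embedding on a domain of size $t$ in dimension $K$ costs powers of $t$ (giving the $t^{-K}$) and extra derivatives, which is where $H^{2K}$ rather than $H^K$ enters; alternatively one can interpolate $\|\mathcal{T}f\|_{H^{-K}}$-type bounds. Collecting, $\|g_K\|\ge c_K\epsilon^{K/2}t^{-K}\|f_K\|^2\|f_K\|_{H^{2K}}^{-1}$, and absorbing $[K!/2]^{1/2}c_K$ into $C_K$ concludes.
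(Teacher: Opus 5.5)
Your reduction to \Cref{lem:paley-zygmund} is the same as the paper's. The top chaos of $I_K(f_K)$ is $\epsilon^{K/2}\tilde I_K(G^{\otimes K}f_K)$, where $(Gf)(r)=\int_r^t e^{-\epsilon D(s-r)}f(s)\,ds$ is your $\mathcal T$, and only $f_K$ reaches chaos $K$. Also $L=\epsilon D-\partial$ is a left inverse of $G$, and everything reduces to the deterministic bound $\|G^{\otimes K}f\|\gtrsim t^{-K}\|f\|^2\|f\|_{H^{2K}}^{-1}$.

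\textbf{The gap is in that bound, which is the real content of the lemma.} Pairing $\|f\|^2=\langle f,L^{\otimes K}g\rangle$ with $g=G^{\otimes K}f$ and integrating by parts produces boundary terms on every face $\{r_j=0\}$. These are not lower order. Already for $K=1$, $\epsilon=0$ and $g(r)=a\phi(r/\delta)$ a bump at $r=0$, the term $f(0)\cdot g(0)$ is of order $a^2/\delta$, comparable to $\|f\|^2$.

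To bound these terms by $\|g\|_{L^2}$ you must control traces of $g$. For $K\ge 2$ you also need traces of derivatives of $g$: on $\{r_1=0\}$ the partner of $f(0,\cdot)$ is $(G\otimes\id)f=(\id\otimes L)g$. That is only possible by interpolating $\|g\|$ against higher Sobolev norms of $g$, bounding those in terms of $f$, and then checking that the exponents close to give exactly $\|f\|^2\|f\|_{H^{2K}}^{-1}$ and $t^{-K}$. You do none of this; ``traces cost derivatives, which is where $H^{2K}$ enters'' is an assertion, not an argument.

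For $K=1$ your route can be completed. Use $|g(0)|^2\lesssim\|g\|\,\|g\|_{H^1}$ and $\|g'\|\le\|g\|+\|f\|$, then $\|f\|_{H^1}^2\lesssim\|f\|\,\|f\|_{H^2}$. For general $K$ it is a nontrivial bookkeeping exercise over all faces of the cube, and your remark that the bound is ``even stronger than needed once boundary terms are handled'' hides exactly this work.

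\textbf{The missing idea is to put all derivatives on $g$, never on $f$.} The paper avoids boundary terms entirely this way. It applies the one-dimensional interpolation inequality $\|Lh\|^2\lesssim\|h\|\,\|h\|_{H^2}$ directly to $h=g$. This inequality needs no boundary conditions and holds uniformly in $t\ge1$, $\epsilon\le1$. The paper then tensorizes it via Cauchy--Schwarz, $\|(L\otimes\id)F\|^2\lesssim\|F\|\,\|(\Lambda\otimes\id)F\|$, and iterates and interpolates to obtain
\[
\|f\|^2=\|L^{\otimes K}g\|^2\lesssim\|g\|\,\|g\|_{H^{2K}}.
\]
Boundedness of $G^{\otimes K}$ on $H^{2K}$, with norm $\lesssim t$ per factor, then gives $\|g\|_{H^{2K}}\lesssim t^K\|f\|_{H^{2K}}$, and the claim follows. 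So the $t^{-K}$ comes from the operator norm of $G$, and the $H^{2K}$ from two derivatives per variable in the interpolation, not from trace losses.
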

\begin{proof}
The claim is trivial for $K=0$ so we assume $K \ge 1$.
    Recall the notation from \Cref{lem:paley-zygmund} and write the solution to \eqref{eq:zvr-system-def} as
    \begin{equ}[e:defzeps]
     z^\epsilon(t) = e^{-\epsilon D t}z_0 + \epsilon^{1/2}\int_0^t e^{-\epsilon D(t-s)}dW_s \;.
    \end{equ}
     
      Write furthermore $G \colon \CH \to \CH$ for the operator
     \begin{equ}[e:defG]
     (Gf)(r) = \int_r^t e^{-\eps D (s-r)}f(s)\,ds \;.
     \end{equ}
     It the follows from \cite[Equ.~4.2]{malliavin} and \eqref{e:defzeps} that
     \begin{equ}[e:exprIk]
     I_k(f) = \eps^{k/2} \tilde I_k(G^{\otimes k} f) + R\;,
     \end{equ}
	where $R$ belongs to the chaoses of order $k-1$ and lower. It is straightforward to verify that 
	one has $Gf \in H^1$ for every $f \in L^2$ and that the operator $Lf = \epsilon D f - f'$ with
	domain $\CD(L) = H^1$ is a left inverse for $G$.

	It follows immediately that
	\begin{equ}[e:assumptionG]
	\|Lf\|^2 \lesssim \|f\| \|\Lambda f\|\;,
	\end{equ}
	for all $f \in \CD(\Lambda)$, 
	where $\Lambda$ denotes the positive selfadjoint operator such that $\|\Lambda f\|^2 = \|f\|_{H^2}^2$.
	
	By Cauchy--Schwarz, \eqref{e:assumptionG} implies that, for any Hilbert space $\bar\CH$ and $f \in \CD(\Lambda) \otimes \bar \CH$,
	one has
	\begin{equ}
	\|(L \otimes \id)f\|_{\CH \otimes \bar \CH}^2 \lesssim \|f\|_{\CH \otimes \bar \CH} \|(\Lambda \otimes \id) f\|_{\CH \otimes \bar \CH}\;.
	\end{equ}
	Iterating this inequality and using the fact that $\|(\Lambda^{\otimes \ell} \otimes \id^{\otimes(k-\ell)})f\|_{L^2} \lesssim \|f\|_{H^{2\ell}}$, we obtain the bound
	\begin{equ}
	\|L^{\otimes k}f\|^{2^k} \lesssim \prod_{0 \le \ell \le k} \|f\|_{H^{2\ell}}^{\binom{k}{\ell}}
	\lesssim \|f\|^{2^{k-1}}\|f\|_{H^{2k}}^{2^{k-1}}\;,
	\end{equ}
	where the second inequality follows from the interpolation inequality $\|f\|_{H^{2\ell}}^k \lesssim \|f\|_{H^{2k}}^\ell \|f\|^{k-\ell}$. We conclude that
	\begin{equ}
	\|G^{\otimes k} f\| \gtrsim \frac{\|f\|^2}{\|G^{\otimes k} f\|_{H^{2k}}}
	\gtrsim \frac{\|f\|^2}{\|f\|_{H^{2k}}}\;,
	\end{equ}
	where we used the uniform boundedness of $G$ on $H^{2k}$.
	The required bound then follows immediately from \eqref{e:exprIk} and \Cref{lem:paley-zygmund}.
\end{proof}

\begin{lem}\label{lem:uniform-exponential-bound}
    For every $c > 0$, $\tilde z_0 \in \R^d$ there is some $\delta,C > 0$ such that for all $z_0 \in \R^d$, $\epsilon > 0$ with $\|z_0 - \tilde z_0\| \leq \delta$, $\epsilon < \delta$, and $t \geq 0$ we have
    $$\|e^{t\tilde A}\| \leq Ce^{(\lambda+c)t} \,,$$
    where $\tilde A \coloneqq A^\epsilon(z_0)$ and $\lambda \coloneqq \lambda(A^\epsilon(z_0))$.
\end{lem}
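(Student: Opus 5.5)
The bound will be proved uniformly in $t$ via a Jordan/Dunford decomposition together with continuity of spectral projections, followed by a compactness argument in the parameter $(z_0,\epsilon)$. Set $T_0 := A^0(\tilde z_0) = A(\tilde z_0)$ and $\lambda_0 := \lambda(T_0)$. The map $(z_0,\epsilon)\mapsto \tilde A = A^\epsilon(z_0)$ is continuous, since $\epsilon\mapsto A^\epsilon$ is continuous and $z\mapsto \sum z_iB_i$ is linear. By \Cref{lem:cty-of-eigenspaces}, $\lambda(\tilde A)$ is within $c/4$ of $\lambda_0$ once $\delta$ is small. It therefore suffices to show $\|e^{t\tilde A}\|\le C e^{(\lambda_0+c/2)t}$ uniformly for $(z_0,\epsilon)$ in a small neighbourhood. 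Indeed, $\lambda_0 + c/2 \le \lambda(\tilde A)+3c/4$, so the claim follows with $c$ in place of $3c/4$.

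The cleanest route avoids tracking Jordan blocks of the perturbed matrix. Fix a large time $\tau>0$. By Gelfand's formula, $\|e^{\tau T_0}\|^{1/\tau}\to e^{\lambda_0}$ as $\tau\to\infty$, because the spectral radius of $e^{\tau T_0}$ is $e^{\tau\lambda_0}$. So choose $\tau$ with $\|e^{\tau T_0}\|\le e^{(\lambda_0+c/4)\tau}$. Since $M\mapsto e^{\tau M}$ is continuous, shrinking $\delta$ gives $\|e^{\tau\tilde A}\|\le e^{(\lambda_0+c/2)\tau}$ for all admissible $(z_0,\epsilon)$. For arbitrary $t\ge0$, write $t = m\tau + s$ with $s\in[0,\tau)$ and use submultiplicativity:
\[
\|e^{t\tilde A}\|\le \|e^{s\tilde A}\|\,\|e^{\tau\tilde A}\|^m \le e^{s\|\tilde A\|} e^{(\lambda_0+c/2)m\tau}.
\]
The first factor is bounded by $e^{\tau \sup\|\tilde A\|}$, which is a uniform constant because $\tilde A$ ranges over a bounded set. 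Since $m\tau\le t$ (up to handling the sign of $\lambda_0+c/2$, absorbed into $C$ via $m\tau\ge t-\tau$), this gives $\|e^{t\tilde A}\|\le Ce^{(\lambda_0+c/2)t}$ with
\[
C = e^{\tau(\sup\|\tilde A\| + |\lambda_0| + c)}.
\]

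The main subtlety is \textbf{uniformity}: the exponent must be allowed to be $\lambda(\tilde A)+c$ rather than something sharper, because the polynomial prefactors from Jordan blocks can blow up as eigenvalues coalesce under perturbation. The Gelfand-plus-continuity trick at one fixed time $\tau$ sidesteps this. The only care needed is in the final step, where we trade $\lambda_0$ for $\lambda(\tilde A)$ using continuity of $\lambda$ from \Cref{lem:cty-of-eigenspaces}.
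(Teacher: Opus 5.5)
Your proof is correct, and it takes a different route from the paper's. Both arguments start from the same reduction: replace $\lambda(A^\epsilon(z_0))$ by $\lambda_0=\lambda(A(\tilde z_0))$ using continuity of $\lambda$. Both also use Gelfand's formula for the unperturbed matrix. They differ in how the bound is transferred to $\tilde A$.

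You apply Gelfand at one fixed time $\tau$ and use continuity of $M\mapsto e^{\tau M}$ to get $\|e^{\tau\tilde A}\|\le e^{(\lambda_0+c/2)\tau}$ uniformly. You then propagate to all $t$ by the semigroup property and submultiplicativity. The remainder $s\in[0,\tau)$ costs only the constant $e^{\tau(\sup\|\tilde A\|+|\lambda_0|+c)}$.

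The paper instead uses the bound $\|e^{tA}\|\le\tilde C e^{(\lambda+c/2)t}$ for all $t$ and writes the Duhamel formula $e^{t\tilde A}=e^{tA}+\int_0^t e^{(t-s)A}(\tilde A-A)e^{s\tilde A}\,ds$. It then runs a bootstrap on the first time $\|e^{t\tilde A}\|$ exceeds $Ce^{(\lambda+c)t}$. That time is ruled out as soon as $\|\tilde A-A\|<c/(2C)$.

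Your argument is shorter and avoids the first-exit argument entirely. The paper's perturbative route gives an explicit smallness threshold on $\|\tilde A-A\|$. It also does not rely on time-homogeneity: the same Duhamel-plus-bootstrap argument bounds the propagator of $\dot M=(A+E(t))M$ when $\sup_t\|E(t)\|$ is small. Your reduction to a single time $\tau$ uses that $\tilde A$ is constant in $t$, which is all this lemma requires.
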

\begin{proof}
By \Cref{lem:cty-of-eigenspaces} we may instead prove the theorem with $\lambda = \lambda(A(\tilde z_0))$, which is the notation we stick to in the proof. Let $A \coloneqq A(\tilde z_0)$. By Gelfand's lemma
    $$\lim_{t \to \infty} \|e^{tA}\|^{1/t} = e^{\lambda} \,,$$
    so there is some $\tilde C > 0$ such that
    $$\|e^{tA}\| \leq \tilde Ce^{(\lambda + c/2)t} \,.$$
    Since $e^{t\tilde A}$ is the solution to $M' = \tilde AM = AM + (\tilde A - A)M$,
    $$e^{t \tilde A} = e^{tA} + \int_0^t e^{(t-s)A}(\tilde A - A)e^{s \tilde A}ds \,.$$
    By continuity of $(t,z_0,\epsilon) \mapsto e^{t\tilde A}$ there is some $C > \tilde C$ such that $\|e^{s \tilde A}\| < C e^{(\lambda + c)s}$ for all $s \leq 1$ and $\|z_0 - \tilde z_0\| \leq 1$, $\epsilon \leq 1$. In particular, the time $\tau \coloneqq \inf\{t > 0 \mid \|e^{t \tilde A}\| > C e^{(\lambda + c)t} \}$ is strictly positive. If $\tau < \infty$ then
    \begin{align*}
        Ce^{(\lambda+c)\tau} =\|e^{\tau \tilde A}\| &\leq \tilde Ce^{(\lambda +c/2)\tau} + \|\tilde A - A\|\tilde CC\int_0^\tau e^{(\tau - s)(\lambda + c/2) + s(\lambda + c)}ds \\
        &= Ce^{(\lambda + c)\tau}\Big[\frac{\tilde C}{C}e^{-c\tau/2} + \frac{2\|\tilde A - A\|\tilde C}{c}(1-e^{-c \tau/2})\Big] \,.
    \end{align*}
    By shrinking $\delta$, we have that $\|\tilde A - A\| < c/(2C)$, and thus
    $$Ce^{(\lambda+c)\tau} \leq Ce^{(\lambda + c)\tau}\frac{\tilde C}{C} < Ce^{(\lambda +c)\tau}$$
    by $C > \tilde C$, contradicting $\tau < \infty$. By our definition of $\tau$ the claim is proven.
\end{proof}

\begin{lem}\label{lem:uniform-exp-lower-bound}
    For every $\tilde z_0 \in \R^d$, $\eta \in (0,\eta^*(\tilde z_0))$, and $\delta, c > 0$ there are $\rho, C > 0$ such that for all $z_0 \in \R^d$ with $\|z_0-\tilde z_0\| \leq \rho$, $\epsilon < \rho$, $v \in \R^n$, and $t \geq C$
    $$\|P_\eta(z_0)v\| \geq \delta \|v\| \implies \|P_{\eta}(z_0)e^{t A^\epsilon(z_0)}v\| \geq e^{(\lambda(A^\epsilon(z_0))-c)t}\|v\| \,,$$
    where $P_\eta(z_0)$ is as in \Cref{def:proj-eta}.
\end{lem}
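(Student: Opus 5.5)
The plan is to first prove the statement for the fixed matrix $A \coloneqq A(\tilde z_0)$ and then pass to nearby $A^\epsilon(z_0)$ by a perturbation argument in the spirit of \Cref{lem:uniform-exponential-bound}. Write $\lambda_0 = \lambda(A)$. Since $\eta < \eta^*(\tilde z_0)$, the range of $P_\eta(\tilde z_0)$ is exactly the sum $E$ of the generalized eigenspaces whose eigenvalues have real part $\lambda_0$. On $E$ every eigenvalue has real part exactly $\lambda_0$. The Jordan decomposition therefore writes $A|_E = \lambda_0 + S + N$, where $S$ is semisimple with purely imaginary spectrum, $N$ is nilpotent, and the three pieces commute.

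The key step is the bound
$$\|e^{t(S+N)}w\| \geq c_0 \|w\|, \qquad w \in E,\ t\ge 0,$$
for some $c_0>0$. Choose a norm on $E$ in which $e^{tS}$ is an isometry. Then $\|e^{t(S+N)}w\| = \|e^{tN}w\|$, and $\|e^{tN}w\| \ge c_0 \|w\|$ holds uniformly in $t \ge 0$: by \eqref{eq:eigenvalue-equals-growth-rate}-type reasoning $\|e^{tN}w\|$ grows polynomially in $t$ with positive leading coefficient, and compactness of the unit sphere of $E$ together with continuity gives the uniform lower bound. This yields $\|P_\eta(\tilde z_0)e^{tA}v\| = \|e^{tA}P_\eta(\tilde z_0)v\| \ge c_0 e^{\lambda_0 t}\delta\|v\|$, because $P_\eta$ commutes with $e^{tA}$.

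For the perturbation, the main obstacle is that nearby matrices $A^\epsilon(z_0)$ may split eigenvalues, so a Jordan form uniform in $z_0$ is not available. I will avoid it as follows. By \Cref{lem:cty-of-eigenspaces}, $P \coloneqq P_\eta(z_0)$ depends continuously on $z_0$ and is a spectral projection commuting with $\tilde A \coloneqq A^\epsilon(z_0)$. The restriction $\tilde A|_{\operatorname{ran}P}$ has spectrum with real parts in $[\lambda(\tilde A)-\eta', \lambda(\tilde A)]$, where $\eta'$ is small because the spectrum of $A|_E$ sits at real part $\lambda_0$ and spectra are continuous. Apply \Cref{lem:uniform-exponential-bound} to the inverse flow $e^{-t\tilde A}|_{\operatorname{ran}P}$, viewed as the flow of $-\tilde A$ restricted to a continuously varying subspace; after conjugating by a continuous family of isomorphisms $\operatorname{ran}P \cong E$, this is a small perturbation of $-A|_E$. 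The same Gronwall and stopping-time argument as in that lemma gives
$$\|e^{-t\tilde A}w\| \le C e^{(-\lambda(\tilde A) + c/2)t}\|w\|, \qquad w \in \operatorname{ran}P,$$
which inverts to $\|e^{t\tilde A}Pv\| \ge C^{-1}e^{(\lambda(\tilde A)-c/2)t}\|Pv\|$. Here the radius $\rho$ is chosen so that $\|\tilde A - A\|$ and $|\lambda(\tilde A)-\lambda_0|$ are small relative to $c$.

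Finally, using $\|Pv\|\ge\delta\|v\|$, we get $\|Pe^{t\tilde A}v\| \ge C^{-1}\delta\, e^{(\lambda(\tilde A)-c/2)t}\|v\|$. Choosing the threshold $C'$ in the statement with $C^{-1}\delta e^{ct/2}\ge 1$ for all $t\ge C'$ yields the claim. The only delicate point is making the conjugation $\operatorname{ran}P\cong E$ uniform; for this I would use $P_\eta(z_0)P_\eta(\tilde z_0)$, which is invertible between the two ranges when $\|P_\eta(z_0)-P_\eta(\tilde z_0)\|<1$.
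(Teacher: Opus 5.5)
There is a genuine gap in your perturbation step. You assert that $P \coloneqq P_\eta(z_0)$ is a spectral projection commuting with $\tilde A = A^\epsilon(z_0)$. You use this twice: to restrict $\tilde A$ to $\operatorname{ran}P$, and at the end to write $P e^{t\tilde A}v = e^{t\tilde A}Pv$. But by \Cref{def:proj-eta}, $P_\eta(z_0)$ is built from the generalized eigenspaces of $A(z_0) = A^0(z_0)$, not of $A^\epsilon(z_0)$. For $\epsilon>0$ the two matrices differ by $A^\epsilon - A$, which need not commute with $P_\eta(z_0)$; it does not in the Lorenz 63 or Navier--Stokes applications.

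Suppose you instead run your inverse-flow argument with the spectral projection $\tilde P$ of $A^\epsilon(z_0)$ for the eigenvalues near the top. That projection does commute with $\tilde A$ and satisfies $\|\tilde P - P\|\to 0$, but you still have to return to $P$. Write $P e^{t\tilde A}v = P e^{t\tilde A}\tilde P v + (P-\tilde P)e^{t\tilde A}(I-\tilde P)v$. Your argument bounds the first term below. The second term can be dominated for \emph{all} $t\ge C$ only if you have a uniform bound $\|e^{t\tilde A}(I-\tilde P)\|\le K e^{(\lambda(\tilde A)-\eta')t}$, with $\eta'$ larger than the rate $c/2$ you lose on $\operatorname{ran}\tilde P$. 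The crude bound $e^{(\lambda+c)t}$ from \Cref{lem:uniform-exponential-bound} does not help: it beats the main term as $t\to\infty$, however small $\|P-\tilde P\|$ is.

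This spectral-gap estimate on the complementary subspace is exactly the second ingredient of the paper's proof. Your proposal never uses the gap $\eta^*(\tilde z_0)$ beyond identifying $E$. You can produce it with your own tools (conjugate $\operatorname{ran}(I-\tilde P)$ to $\operatorname{ran}(I-P_\eta(\tilde z_0))$ and apply the stopping-time argument), but it must be added. For comparison, the paper avoids commutation altogether. It proves the lower bound on the top part and the upper bound on the complement only at $(\tilde z_0,0)$, and transfers them by continuity on the compact window $t\in[C,2C]$. There they give the self-reproducing property $\|P_\eta(z_0)e^{tA^\epsilon(z_0)}v\|\ge\delta\|e^{tA^\epsilon(z_0)}v\|$ together with growth $\|e^{tA^\epsilon(z_0)}v\|\ge e^{(\lambda-c/2)t}\|v\|$, and the paper then iterates via the semigroup property. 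When $A^\epsilon\equiv A$, your route does work and is arguably cleaner.

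Separately, your ``key step'' is false as stated. For any nonzero nilpotent $N$ one has $\inf_{t\ge0,\,\|w\|=1}\|e^{tN}w\| = 0$. For the $2\times 2$ Jordan block and $w=(-t,1)/\sqrt{1+t^2}$ you get $\|e^{tN}w\|=(1+t^2)^{-1/2}$. Compactness of the sphere does not rescue this, because $t$ ranges over $[0,\infty)$ and $w\mapsto\inf_t\|e^{tN}w\|$ is only upper semicontinuous. The correct statement is $\|e^{tN}w\|\ge\|e^{-tN}\|^{-1}\|w\|$ with $\|e^{-tN}\|$ growing polynomially; this is how the paper uses Gelfand's formula. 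That version is harmless, since you can afford to lose a factor $e^{-ct/4}$. The slip is also not load-bearing: your perturbation step only needs the subexponential bound on $e^{-tA|_E}$ that Gelfand's formula provides.
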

\begin{proof}
For fixed $z_0 = \tilde z_0$ and $\epsilon = 0$ the claim would be immediate by decomposing $v = P_\eta(\tilde z_0)v + [v - P_\eta(\tilde z_0)v]$ and noting that $e^{tA(\tilde z_0)}$ acts as approximately $e^{\lambda(A(\tilde z_0))t}$ on $P_\eta(\tilde z_0)v$ and at most $e^{[\lambda(A(\tilde z_0)) - \eta]t}$ on $v - P_\eta(\tilde z_0)v$. To make this argument uniform in $z_0,\epsilon$, we first fix a large enough $C$ and use the argument above for $t \in [C,2C], z_0 = \tilde z_0, \epsilon = 0$, then choose $\rho$ small enough so that by continuity similar bounds hold uniformly for $t \in [C,2C], \|z_0 -\tilde z_0\| \leq \rho, \epsilon < \rho$, and finally iterate these bounds to handle arbitrary $t \geq C$. The details are given below:

    We may assume that $c < \eta^*(\tilde z_0)$ because if the result holds for $c$ then it is true for all larger $c$. It follows from \Cref{lem:cty-of-eigenspaces} that by choosing an appropriate $\rho$ we can assume $\|P_\eta(z_0) - P_\eta(\tilde z_0)\|$ is as small as we want, in particular smaller than $\delta/2$.

    Then by our assumption $\|P_\eta(z_0)v\| \geq \delta\|v\|$ we conclude $\|P_\eta(\tilde z_0)v\| \geq \delta\|v\|/2$. We choose an equivalent norm so that the generalized eigenspaces of $A(\tilde z_0)$ are orthogonal so that, using the notation in \Cref{def:proj-eta},
    $$\sum_{\{i \mid \Re \lambda_i = \lambda(A(\tilde z_0))\}} \|e^{t(\lambda_i + N_i)}v_i\|^2 = \|P_\eta(\tilde z_0)e^{tA(\tilde z_0)}v\|^2$$
    (since $\eta < \eta^*(\tilde z_0)$), where $N_i$ is the nilpotent part of the Jordan block. Set $\lambda \coloneqq \lambda(A(\tilde z_0))$. Since there are only finitely many options for $N_i$, by Gelfand's formula ($\lim_{t \to \infty} \|e^{-tN_i}\|^{1/t} = \lambda(e^{-N_i}) = 1$) and $\|e^{tN_i}v_i\| \geq \|e^{-tN_i}\|^{-1}\|v_i\|$ we have that
    $$\frac{\delta}{2}e^{t(\lambda - c/4)}\|v\| \leq e^{t(\lambda - c/4)}\|P_\eta(\tilde z_0)v\| \lesssim \|P_\eta(\tilde z_0)e^{tA(\tilde z_0)}v\| \,.$$
    Thus, for $C$ large enough and all $t \geq C$ we have
    $$\|P_\eta(\tilde z_0)e^{t A(\tilde z_0)}v\| \geq 3e^{(\lambda-c/2)t}\|v\| \,.$$
    We have again by Gelfand's lemma and $c < \eta^*(\tilde z_0)$ that there is $K > 0$ such that for all $t \geq 0$ (in particular, $K$ is independent of $C$)
    $$\|(I - P_\eta(\tilde z_0))e^{tA(\tilde z_0)}v\| \leq Ke^{(\lambda - c)t}\|v\| \,.$$
    By the continuity mentioned above we obtain for $t \in [C,2C]$ that
    \begin{equation*}
        \|P_\eta(z_0)e^{t A^\epsilon(z_0)}v\| \geq 2e^{(\lambda-c/2)t}\|v\| \quad \text{and} \quad  \|(I - P_\eta(z_0))e^{tA^\epsilon(z_0)}v\| \leq 2Ke^{(\lambda - c)t}\|v\| \,.
    \end{equation*} 
    (where $I$ denotes the identity operator). If we choose $C$ large enough, by triangle inequality and our equivalent norm (so $\|P_\eta(\tilde z_0)\| \leq 1$) we have (for all $t \in [C,2C]$)
    \begin{equation*}
    \|P_\eta(z_0)e^{tA^\epsilon(z_0)}v\| \geq \delta\|e^{tA^\epsilon(z_0)}v\| \quad \text{and} \quad \|e^{tA^\epsilon(z_0)}v\| \geq e^{(\lambda - c/2)t}\|v\| \,.
    \end{equation*}
    By \Cref{lem:cty-of-eigenspaces} we may assume $|\lambda - \lambda(A^\epsilon(z_0))| \leq c/2$, so the claim is proven by iterating the bounds above for $t \in [2C,3C]$ (replace $v$ with $e^{CA^\epsilon(z_0)}v$), $t \in [3C,4C]$ (replace $v$ with $e^{2CA^\epsilon(z_0)}v$), etc.
\end{proof}

\begin{thm}\label{thm:persistence-extinction}
    Let $(z_t,x_t) \in \mcM \coloneqq \R^d \times \R^n$ denote the solution to
    \begin{equ}\label{eq:general-sde}
            dz = f(z,x)dt + \sigma dW_t \,,\qquad
            dx = F(z,x)dt \,,
    \end{equ}
    where $W_t$ is a standard $\R^d$-valued Wiener process, $\sigma \in \R$, the functions $f: \R^d \times \R^n \to \R^d$ and $F: \R^d \times \R^n \to \R^n$ are continuously differentiable with locally Lipschitz derivative, and $F(z,0) = 0$ for all $z \in \R^d$. Suppose there exists a function $\bar U: \R^d \times \R^n \to [1,\infty)$ such that:
    \begin{itemize}
        \item $\bar U \in \CC^2$ has compact sublevel sets.
        \item There are constants $K,c > 0$ such that
        \begin{equation}\label{eq:Lyapunov-condition}
            \Ll \bar U \leq K - c\bar U \,,
        \end{equation}
        where $\Ll$ denotes the generator of \eqref{eq:general-sde}.
        \item There is some $C > 0$ such that for all $z \in \R^d$, $x \in \R^n$
        \begin{equation}\label{eq:vanishing-condition}
            |\ip{F(z,x),x}| \leq C|x|^2\Big(2K - \frac{\Ll \bar U}{\bar U} + \frac{\sigma^2|\partial_z \bar U|^2}{\bar U^2}\Big)
        \end{equation}
    \end{itemize}

    Then define the process
    \begin{equation}\label{eq:bigZ-v-process}
        \begin{aligned}
            dZ &= f(Z,0)dt + \sigma dW_t \\
            dv &= [\partial_xF(Z,0)v - \ip{\partial_xF(Z,0)v,v}v]dt \,,
        \end{aligned}
    \end{equation}
    let $P_{\invm}$ denote the set of all invariant measures of $(Z_t, v_t)$ on $\R^d \times S^{n-1}$, and define
    \begin{align*}
        \Lambda^+ &\coloneqq \sup_{\mu \in P_{\invm}} \int \ip{\partial_xF(z,0)v,v}d\mu(z,v) \\
        \Lambda^- &\coloneqq \inf_{\mu \in P_{\invm}} \int \ip{\partial_xF(z,0)v,v}d\mu(z,v) \,.
    \end{align*}

    Denoting $\mcM_0 \coloneqq \{(z,x) \in \R^d \times \R^n \mid x = 0\}$ we conclude:
    \begin{itemize}
        \item If $\Lambda^- > 0$, \eqref{eq:general-sde} has multiple invariant measures, at least one of which gives $\mcM_0$ measure $0$. Furthermore, for all $\delta > 0$ there is some $\eta > 0$ such that for all $(z_0,x_0) \in \inv \coloneqq \mcM_0^c$ we have
        $$\limsup_{t \to \infty} \frac{1}{t}|\{s \leq t \mid |x_s| < \eta \}| < \delta \quad a.s.$$
        \item If $\Lambda^+ < 0$ and there exists $\epsilon > 0$ and a continuously differentiable function $\Upsilon: \R^d \times \R^n \to [0,\infty)$ such that
        \begin{equation}\label{eq:accessibility-condition}
        \partial_z\Upsilon f+\partial_x\Upsilon F
\le -\epsilon\Upsilon \text{ on all of } \R^d \times \R^n 
        \end{equation}
 and if $(z_n,x_n) \in \R^d \times \R^n$ is such that $\Upsilon(z_n,x_n)\to0$ implies  $(z_n,x_n)\to 0$,
        then all invariant measures of \eqref{eq:general-sde} are supported on $\mcM_0$ and
        $$\limsup_{t \to \infty} \frac{1}{t}\ln |x_t| < 0 \quad a.s.$$
    \end{itemize}
\end{thm}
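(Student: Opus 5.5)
This theorem is essentially a repackaging of the persistence results of \cite{persistence} and the extinction results of \cite{extinction} in the present setting. The plan is therefore to verify their hypotheses, not to reprove them.

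\textbf{Common setup.} Work on the state space $\mcM = \R^d\times\R^n$ with the invariant set $\mcM_0=\{x=0\}$, which is invariant because $F(z,0)=0$. Write $x = r v$ with $r=|x|$ and $v\in S^{n-1}$. Since $F(z,x) = \partial_xF(z,0)x + O(|x|^2)$ locally, the linearised dynamics near $\mcM_0$ is exactly the process \eqref{eq:bigZ-v-process}. Its generator, applied to $\ln r$, gives the function
\[
\Lambda(z,v) = \langle \partial_xF(z,0)v,v\rangle .
\]
This is the transverse growth-rate function appearing in both papers. The Lyapunov function $\bar U$, together with \eqref{eq:Lyapunov-condition}, gives tightness of empirical occupation measures and existence of invariant measures for \eqref{eq:general-sde}. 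I will also use the standard consequence that $\bar U$-moments are uniformly controlled.

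\textbf{Persistence ($\Lambda^->0$).} I would apply the persistence theorem of \cite{persistence}, in the form stated there for invariant sets with an averaging function $V = -\ln|x|$ near $\mcM_0$. The required hypothesis is a bound on $\Gamma V$ and $\Ll V$ by a multiple of $2K - \Ll\bar U/\bar U$. Here $\Ll V = -\langle F(z,x),x\rangle/|x|^2$, so \eqref{eq:vanishing-condition} is precisely the integrability condition they require. The carré-du-champ term $\sigma^2|\partial_z\bar U|^2/\bar U^2$ appears because the noise acts only on $z$, and hence $\Gamma V=0$. The sign condition to check is that every invariant measure $\mu$ of the boundary process, lifted to $\R^d\times S^{n-1}$, satisfies $\int\Lambda\,d\mu>0$; this is the statement $\Lambda^->0$. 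Their conclusion then yields the following two facts:
\begin{itemize}
\item an invariant measure on $\inv=\mcM_0^c$, which is distinct from any measure supported on $\mcM_0$, so there are multiple invariant measures;
\item the occupation-time bound, obtained as their ``$H$-persistence'' estimate for the neighbourhood $\{|x|<\eta\}$.
\end{itemize}

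\textbf{Extinction ($\Lambda^+<0$).} I would invoke the extinction theorem of \cite{extinction}. Its hypotheses are the same Lyapunov and integrability conditions as in the persistence case, together with an accessibility condition: $\mcM_0$ must be reachable, in the sense that every trajectory started in $\inv$ approaches a compact set near $\mcM_0$ with positive probability. The function $\Upsilon$ in \eqref{eq:accessibility-condition} provides this. Along the noiseless control (taking $W\equiv 0$ via the support theorem), $\Upsilon$ decays exponentially, and the stated property of $\Upsilon$ then drives the state to the origin, which lies in $\mcM_0$. By Stroock--Varadhan, the stochastic path stays close to this control with positive probability. With $\Lambda^+<0$, the boundary is then almost surely attracting from everywhere. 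This gives $\limsup t^{-1}\ln|x_t|<0$, and it also shows that all invariant measures are supported on $\mcM_0$.

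\textbf{Main obstacle.} The delicate step is matching the precise forms of the hypotheses in \cite{persistence,extinction}. Their conditions are stated for general Feller processes with a function $W$ satisfying $\Ll W\le K-cW$ and an $H$-function. I must check that \eqref{eq:vanishing-condition} implies their uniform integrability of $\Ll V$ with respect to all limits of occupation measures, including the carré-du-champ correction that arises when $\ln\bar U$ is used in place of $\bar U$. Everything else reduces to routine verification.
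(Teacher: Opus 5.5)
This is essentially the paper's proof. The paper also verifies the hypotheses of \cite[Thm~3.5]{extinction} and \cite[Thm~4.4]{persistence} with $V=-\ln|x|$, using $\Gamma V=0$, $\Ll V=-\ip{F(z,x),x}/|x|^2$, the fact that \eqref{eq:Lyapunov-condition}--\eqref{eq:vanishing-condition} are exactly the conditions of \cite[Lemma~9.3]{extinction}, and the decay of $\Upsilon$ along the noiseless flow together with Stroock--Varadhan for accessibility. The ``matching'' details you defer are handled there by the quadruple map $\pi(z,v,r)=(z,rv)$ of \cite[Thm~3.9]{extinction}, which identifies $P_{\invm}$ with the boundary invariant measures of the lifted process and gives $\alpha=-\Lambda^+$; by replacing $V$ with the nonnegative $-\ln g(|x|)$ for the persistence theorem; and by the strong law from \cite[Cor.~4.2, Lem.~2.12]{extinction}.
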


\begin{proof}
    The second bullet point is a consequence of \cite[Theorem 3.5]{extinction}, using \cite[Theorem 3.9, Lemma 9.3]{extinction} to verify \cite[Assumptions 1-5]{extinction} with $V(z,x) = -\ln |x|$, and $\Upsilon$ to verify the accessibility condition. Indeed, \eqref{eq:Lyapunov-condition} and \eqref{eq:vanishing-condition} are exactly the conditions in \cite[Lemma 9.3]{extinction}: $\Gamma V = 0$ since $x$ has no $dW$ term in \eqref{eq:general-sde}, and the explicit form of the generator yields $\Ll V(z,x) =
 f\partial_zV + F \partial_xV + \frac{1}{2} \sigma^2 \Delta_z V =
     -\ip{F(z,x),x}/|x|^2$. For the application of \cite[Theorem 3.9]{extinction}, define $\ncN \coloneqq \R^d \times S^{n-1} \times [0,\infty)$, $\ncN_0 \coloneqq \R^d \times S^{n-1} \times \{0\}$, $\ncN_+ \coloneqq \ncN \setminus \ncN_0 = \R^d \times S^{n-1} \times (0,\infty)$, and $\pi:\ncN \to \R^d \times \R^n$ via $\pi(z,v,r) = (z,rv)$. Then $\pi$ is a quadruple map (\cite[Definition 3.7]{extinction}) from the Markov process $Y_t \coloneqq (z_t,v_t,r_t)$ to $X_t \coloneqq (z_t,x_t)$, where formally $v_t = x_t/|x_t|$ and $r_t = |x_t|$, and rigorously we can define 
    \begin{align*}
        dr &= r\ip{\tilde F(z,v,r),v}dt \\
        dv &= [\tilde F(z,v,r) - \ip{\tilde F(z,v,r),v}v]dt \\
        \tilde F(z,v,r) &\coloneqq \begin{cases}
             r^{-1}F(z,rv) & \text{if } r > 0 \\
    \partial_xF(z,0)v   & \text{if } r = 0 \,.
        \end{cases}
    \end{align*}
    The abuse of notation with \eqref{eq:bigZ-v-process} is intentional because when $r = 0$ (meaning $Y_0 \in \ncN_0$) we have $Y_t = (Z_t,v_t,0)$. Thus, $P_{\invm}$ (as defined above) is exactly $P_{\invm}(\ncN_0)$ (as defined in \cite[Theorem 3.9]{extinction}). Since the continuous extension $H: \ncN \to \R$ of $(\Ll V \circ \pi)(z,v,r) = -\ip{\tilde F(z,v,r),v}$ satisfies $H(z,v,0) = -\ip{\partial_xF(z,0)v,v}$, we conclude that $-\Lambda^+ = \alpha$ (where $\alpha$ is as in \cite[Theorem 3.9]{extinction}). Finally, Gronwall's inequality implies that $\Upsilon(\tilde z_t, \tilde x_t) \to 0$ as $t \to \infty$, where $(\tilde z_t,\tilde x_t)$ solves \eqref{eq:general-sde} with $\sigma = 0$, so the accessibility of $\mcM_0$ (\cite[(3.1)]{extinction}) follows from Stroock-Varadhan support theorem.

    The first bullet point is a consequence of \cite[Theorem 4.4]{persistence} with $V(z,x) = -\ln g(|x|)$, where $g:(0,\infty) \to (0,1]$ is a smooth function with $g(x) = 1$ for $x \geq 1$ and $g(x) = x$ on a neighborhood of $0$. (The discrepancy with $V$ above is only because \cite[Theorem 4.4]{persistence} requires nonnegative $V$. It does not affect any of the conclusions above because $\Ll V$ above and $\Ll V$ here agree exactly for small $|x|$ and in general only differ by a factor of $g'(|x|)|x|/g(|x|)$, which is bounded). Indeed, as above we have that $\Lambda^-$ corresponds exactly to the one in \cite[Definition 4.2]{persistence}. Furthermore, since we already checked that \cite[Assumption 1-3, 4(ii), 5]{extinction} hold above, we have by \cite[Corollary 4.2, Lemma 2.12]{extinction} (to verify the strong law) that \cite[Hypotheses 1-4]{persistence} hold (note that $W'/(1+|H|)$ being proper is implied by $H$ vanishing over $W'$), so \cite[Theorem 4.4]{persistence} indeed applies.
\end{proof}

\bibliography{references}
\bibliographystyle{Martin}

\end{document}